\newcounter{enumi_memory}  
\theoremstyle{definition}
\newtheorem{question}{Question}
\newtheorem*{intro-definition}{Definition}
\theoremstyle{plain}
\newtheorem{hypotheses}{Hypotheses}
\newtheorem*{Final-example}{Example}
\newtheorem{FinalThm}{Theorem}
\newtheorem{Final-corollary}{Corollary}
\theoremstyle{plain}
\newtheorem{theorem}{Theorem}[section]
\newtheorem{lemma}[theorem]{Lemma}
\newtheorem{corollary}[theorem]{Corollary}
\theoremstyle{remark}
\newtheorem{remark}[theorem]{Remark}
\theoremstyle{definition}
\newtheorem{definition}[theorem]{Definition}
\newtheorem{miniremark}[theorem]{}
\newtheorem{example}[theorem]{Example}
\newtheoremstyle{citing}
  {3pt}
  {3pt}
  {\itshape}
  {}
  {}
  {\textbf.}
  {.5em}
  {\thmnote{#3}}
\theoremstyle{citing}
\newtheorem*{citing}{}
\DeclareMathOperator{\without}{\sim}
\newcommand{\restrict}{\mathop{\llcorner}}
\DeclareMathOperator{\card}{card}
\DeclareMathOperator{\with}{:}
\DeclareMathOperator{\reach}{reach} 
\DeclareMathOperator{\Bdry}{Bdry}
\DeclareMathOperator{\Clos}{Clos}
\DeclareMathOperator{\Tan}{Tan}     
\DeclareMathOperator{\Nor}{Nor}		
\DeclareMathOperator{\spt}{spt}     
\DeclareMathOperator{\im}{im}       
\DeclareMathOperator{\Int}{Int}     
\DeclareMathOperator{\diam}{diam}   
\DeclareMathOperator{\Lip}{Lip}     
\DeclareMathOperator{\dmn}{dmn}     
\DeclareMathOperator{\dist}{dist}   
\DeclareMathOperator{\Hom}{Hom}     
\DeclareMathOperator{\Der}{D}       
\DeclareMathOperator{\weakD}{\mathbf{D}}  
\DeclareMathOperator{\boundary}{\partial} 
\DeclareMathOperator{\ap}{ap}       
\newcommand{\ud}{\,\mathrm{d}}
\title{A priori bounds for geodesic diameter. Part III. \\ A Sobolev-Poincaré
inequality and applications to a variety of geometric variational problems}
\author{Ulrich Menne \and Christian Scharrer}
\begin{document}

\maketitle 
 
\begin{abstract}
	Based on a novel type of Sobolev-Poincaré inequality (for generalised
	weakly differentiable functions on varifolds), we establish a
	finite upper bound of the geodesic diameter of generalised compact
	connected surfaces-with-boundary of arbitrary dimension in Euclidean
	space in terms of the mean curvatures of the surface and its boundary.
	Our varifold setting includes smooth immersions, surfaces with finite
	Willmore energy, two-convex hypersurfaces in level-set mean curvature
	flow, integral currents with prescribed mean curvature vector, area
	minimising integral chains with coefficients in a complete normed
	commutative group, varifold solutions to Plateau's problem furnished
	by min-max methods or by Brakke flow, and compact
	sets solving Plateau problems based on Čech homology.  Due to the
	generally inevitable presence of singularities, path-connectedness was
	previously known neither for the class of varifolds (even in the
	absence of boundary) nor for the solutions to the Plateau problems
	considered.
\end{abstract}

\paragraph{MSC-classes 2020} 53A07 (Primary); 46E35,  49Q05, 49Q15,  53A10,
53C22 (Secondary).

\paragraph{Keywords} Sobolev-Poincaré inequality $\cdot$
area-stationary set $\cdot$ geodesic distance $\cdot$
diameter $\cdot$ varifold $\cdot$ indecomposability $\cdot$ mean curvature
$\cdot$ boundary $\cdot$ density $\cdot$ Plateau problem $\cdot$ integral
current $\cdot$ integral $G$ chain $\cdot$ Čech homology.

\tableofcontents

\section{Introduction}

Throughout this introduction, we suppose \emph{$m$ and $n$ are integers, $2
\leq m \leq n$, and $B$ is a nonempty compact $m-1$ dimensional submanifold of
class $2$ of $\mathbf R^n$}.

\begin{intro-definition} [see \ref{def:geodesic_distance}]
	For closed subsets $A$ of $\mathbf R^n$, their \emph{geodesic
	diameter} is the supremum of all numbers $\sigma (a,x)$ corresponding
	to $a,x \in A$, where $\sigma (a,x)$ is the infimum of the set of
	lengths of continuous paths in $A$ connecting $a$ and $x$.
\end{intro-definition}

We derive upper bounds for the geodesic diameter of sets $A$ associated with
solutions to a variety of geometric variational problems.
They imply finiteness and are given solely in terms of the boundary data and
the dimensions involved.

\subsection{Plateau problems}

We illustrate our varifold-theoretic results by their implication on the two
most prominent formulations of Plateau's problem in geometric measure theory.

\begin{FinalThm} [see \ref{theorem:diameter_bound_mass_minimising_chains} and
\ref{remark:diameter_bound_mass_minimising_currents}]
\label{Thm:Plateau:chains}
	Suppose $S \in \mathbf I_m ( \mathbf R^n )$, $\boundary S$ is
	indecomposable, $\| \boundary S \| = \mathscr H^{m-1} \restrict B$,
	\begin{equation*}
		\| S \| ( \mathbf R^n ) \leq \| T \| ( \mathbf R^n ) \quad
		\text{whenever $T \in \mathbf I_m ( \mathbf R^n )$ and
		$\boundary T = \boundary S$},
	\end{equation*}
	and $d$ denotes the geodesic diameter of $\spt \| S \|$.

	Then, for some positive finite number $\Gamma$ determined by $m$,
	there holds
	\begin{equation*}
		{\textstyle d \leq \Gamma \int_B | \mathbf h ( B, b) |^{m-2}
		\ud \mathscr H^{m-1} \, b};
	\end{equation*}
	here, by convention, we stipulate $\int_B | \mathbf h (B,b) |^0 \ud
	\mathscr H^1 \, b = \mathscr H^1 (B)$ regarding $m = 2$.
\end{FinalThm}
By the fundamental results of H.\ Federer and W.\ Fleming, there exists an
absolutely area minimising integral current $S$ satisfying the hypotheses of
Theorem \ref{Thm:Plateau:chains} whenever $B$ is connected and orientable.
Even in case $A = \spt \| S \|$ is an $m$ dimensional
submanifold-with-boundary of class $2$ and $\partial A = B$, no a priori
estimate for the geodesic diameter was known prior to this work.  If $m<n-1$,
even finiteness of $d$ is new; in fact, it was only known that, in general,
$A$ is connected (an elementary fact shown by the second paper
\cite{arXiv:2209.05955v2} of our series, see
\ref{remark:connected-integral-minimisers} and
\ref{remark:connected-minimisers}) and that, in case $B$ is of class $4$, much
more profoundly $A \without B$ is connected (by results in \cite{MR0397520}
and \cite{MR1777737} due to W.\ Allard and F.\ Almgren, respectively, see
\ref{remark:earlier-results}) and the geodesic distance on $A \without B$ in
the sense of \cite[Definition 6.6]{MR3626845} is real valued and continuous
with respect to the Euclidean metric (by \cite[Theorem 6.8\,(1)]{MR3626845} by
the first author).  If $m = n-1$, the properties of the geodesic distance on
$A \without B$ may be combined with the studies of R.\ Hardt and L.\ Simon in
\cite{MR554379} to yield finiteness of $d$ without providing any a priori
estimate on $d$, see \ref{remark:earlier-results}.  Theorem
\ref{Thm:Plateau:chains} transfers unchanged to the context of flat chains
modulo $\nu$; in fact, in \ref{theorem:diameter_bound_mass_minimising_chains},
we provide a formulation of Theorem \ref{Thm:Plateau:chains} in terms of the
integral chains with coefficients in a complete normed commutative group
constructed in the first paper of our series \cite{arXiv:2206.14046v2}, see
\ref{remark:classic-Abelian-groups}.

The role of the indecomposability hypothesis on $\boundary S$ is merely to
guarantee the indecomposability of $S$, see
\ref{theorem:diameter_bound_mass_minimising_chains} and
\ref{remark:diameter_bound_mass_minimising_currents}.  Thus, we obtain a
nonexistence criterion for indecomposable solutions to Plateau's problem, see
\ref{remark:nonexistence}.

Next, we describe the theorem which results from combining our result on
area-stationary sets in \ref{corollary:Reifenberg-Plateau-diameter-sets} with
the studies of the Reifenberg-type Plateau problem by F.\ Almgren, H.\ Pugh,
and C.\ Labourie in \cite{MR0420406,MR3998213,MR4489608}.
\begin{FinalThm} [see \ref{remark:Reifenberg-Plateau}]
\label{FinalThm:Reifenberg-Plateau}
	Suppose $B$ is connected, $G$ is a commutative group, $L$ is a
	subgroup of the $(m-1)$-th Čech homology group of $B$ with
	coefficients in $G$, $\mathscr{\check C} (B,L,G)$ denotes the family
	of compact subsets of $\mathbf R^n$ spanning $L$,
	\begin{equation*}
		E \in \mathscr{\check C} (B,L,G), \quad \mathscr H^m (E) =
		\inf \big \{ \mathscr H^m (F) \with F \in \mathscr{\check C}
		(B,L,G) \big \},
	\end{equation*}
	$A = \spt ( \mathscr H^m \restrict E )$, and $d$ is the geodesic
	diameter of $A$.

	Then, for some positive finite number $\Gamma$ determined by $n$,
	there holds
	\begin{equation*}
		{\textstyle d \leq \Gamma \reach(B)^{-m} \mathscr H^{m-1}
		(B)^{m/(m-1)} \int_B | \mathbf h(B,b)|^{m-2} \ud
		\mathscr H^{m-1} \, b};
	\end{equation*}
	here, by convention, we stipulate $\int_B | \mathbf h (B,b) |^0 \ud
	\mathscr H^1 \, b = \mathscr H^1 (B)$ regarding $m = 2$.
\end{FinalThm}
This yields an a priori bound on $d$ determined by the boundary $B$ and the
dimension $n$.  As before, no bound was known even in the smooth case and, due
to inevitable singularities, only points in $A \without B$ in the same
connected component of $A$ were known to admit a connecting path in $A$ of
finite length.

\subsection{The applications to geometric variational problems}

The scope of our present results is much wider than the
relatively regular setting of minimisers of Plateau problems: It encompasses
the Willmore energy with clamped boundary condition studied by M.\ Novaga and
M.\ Pozzetta, see \ref{remark:clamped-Willmore-II}, level-set mean curvature
flow of two-convex hypersurfaces as studied by P.\ Gianniotis and R.\
Haslhofer in \cite{MR4176547}, see \ref{remark:two-convex-MCF},
$\lambda$-minimising currents (which in turn include integral currents with
prescribed mean curvature vector and codimension-one area minimising integral
currents with prescribed volume, as studied by F.\ Duzaar, K.\ Steffen, and
M.\ Fuchs in
\cite{MR1243155,MR1037996,MR1200740,MR1220033,MR1156438,MR1383909}, see
\ref{remark:Duzaar-Steffen-Fuchs}), integral varifolds stationary in $\mathbf
R^n \without B$ as furnished either by Brakke flow with fixed boundary studied
by S.\ Stuvard and Y.\ Tonegawa in \cite{MR4204569}, see
\ref{remark:Brakke-flow} and \ref{remark:Brakke-flow-II}, or by min-max
methods studied by C.\ De Lellis, J.\ Ramic, and R.\ Montezuma in
\cite{MR3893761,MR4160865}, see \ref{remark:min-max}.

\subsection{The general results in the varifold-setting}

Here, we discuss our key results---Theorems \ref{Thm:new-Sobolev-Poincare},
\ref{Thm:varifold-density-lower-bound}, and
\ref{Thm:varifold-diameter-estimate}---in the varifold-setting.  The following
subsection will then include corresponding statements---Corollaries
\ref{Final-corollary:new-Sobolev-Poincare},
\ref{Final-corollary:lower-density-bound}, and
\ref{Final-corollary:diameter-bound-immersions}---formulated in a purely
differential-geometric setting as well as two further theorems in the varifold
setting---Theorems \ref{FinalThm:geom-var-I} and \ref{FinalThm:geom-var-II}.
The latter are tailored for applications to geometric variational problems.

\begin{hypotheses} [First variation] \label{hyp:delta_v}
	Suppose $V$ is an $m$ dimensional varifold in an open subset $U$ of
	$\mathbf R^n$ and the first variation,
	\begin{equation*}
		\updelta V \in \mathscr D' ( U, \mathbf R^n ),
	\end{equation*}
	of $V$ is representable by integration (equivalently, the variation
	measure, $\| \updelta V \|$, of $\updelta V$ is a Radon measure).
\end{hypotheses}

We recall that an $m$ dimensional varifold $V$ in $U$ is a Radon measure over
the Cartesian product of $U$ with the Grassmann manifold $\mathbf G (n,m)$
consisting of all (unoriented) $m$ dimensional vector subspaces of $\mathbf
R^n$, that the projection of $V$ onto the first factor is termed the weight of
$V$ and is denoted by $\| V \|$, and that $\mathbf T ( V )$ is the class
consisting of all real valued generalised $V$ weakly differentiable functions,
see \cite[Definition 4.2]{MR3777387} by the authors, which includes all
locally Lipschitzian functions $f : U \to \mathbf R$, see \cite[Lemma
4.6\,(1)]{MR3777387}.

\begin{intro-definition} [see \protect{\cite[Definition 5.1]{MR3528825}}
by the first author]
	Suppose $V$ satisfies the Hypotheses~\ref{hyp:delta_v} and $E$ is
	measurable with respect to $\| V \|$ and $\| \updelta V \|$.

	Then, the \emph{distributional $V$ boundary} of $E$ is defined by
	\begin{equation*}
		V \boundary E = ( \updelta V ) \restrict E - \updelta ( V
		\restrict E \times \mathbf G (n,m) ) \in \mathscr D' ( U,
		\mathbf R^n ).
	\end{equation*}
\end{intro-definition}

If $V$ and $E$ are suitably regular (see H.\ Federer's characterisation of
sets of locally finite perimeter in \cite[4.5.11]{MR41:1976}), then $V
\boundary E$ may be expressed in terms of the exterior normal of $E$ (with
respect to $V$), see \cite[Theorem 5.9]{MR3528825}.  The notion of boundary
allows us to formulate indecomposability of a varifold by considering, for a
given class of functions, how many of its superlevel sets
split the varifold in a nontrivial way and yet have no distributional
boundary.

\begin{intro-definition}
	[\protect{see \cite[7.1]{arXiv:2209.05955v2}}]
\label{def:indecomposability}

	Suppose that $V$ satisfies the Hypotheses \ref{hyp:delta_v} and
	that $\Psi \subset \mathbf T ( V )$.

	Then, $V$ is called \emph{indecomposable of type $\Psi$} if and only
	if, whenever $f \in \Psi$, the set of $y \in \mathbf R$, such that
	$E(y) = \{x \with f(x)>y\}$ satisfies
	\begin{equation*}
		\|V\|( E(y) )>0, \quad \|V\| ( U \without E(y) )>0, \quad V
		\boundary E (y) = 0
	\end{equation*}
	has $\mathscr L^1$ measure zero.
\end{intro-definition}

It is crucial for the present final paper of our series,
that indecomposability of type $\mathscr D (U,\mathbf R)$ is strictly weaker
than indecomposability as defined in \cite[Definition 6.2]{MR3528825}.  An
in-depth comparison of notions of indecomposability has been carried out in
the second paper \cite{arXiv:2209.05955v2}; for instance, two touching spheres
yield a varifold which is decomposable but indecomposable of type $\mathscr D
( U, \mathbf R)$, see Example 2 and Corollary 2 therein.  The formulation of
our main results involves two further sets of hypotheses whose meaning and
significance we shall discuss next.

\begin{hypotheses} [Density and mean curvature] \label{hyp:mean_curvature}
	Suppose $V$ is an $m$ dimensional varifold in an open subset $U$ of
	$\mathbf R^n$, $\| \updelta V \|$ is a Radon measure absolutely
	continuous with respect to $\| V \|$, $\boldsymbol \Uptheta^m ( \| V
	\|, x ) \geq 1$ for $\| V \|$ almost all $x$, $1 \leq p \leq \infty$,
	and the generalised mean curvature vector $\mathbf h (V,\cdot)$ of $V$
	belongs to $\mathbf L_p^\mathrm{loc} ( \| V \|, \mathbf R^n )$.
\end{hypotheses}

Unlike in the differential-geometric case, $\mathbf h (V, \cdot )$ may have a
nontrivial tangential component related to variations of $\boldsymbol
\Uptheta^m ( \| V \|, \cdot)$.  In this regard, considering the example of a
weighted properly embedded smooth submanifold (see \cite[Remark 7.6, Lemma
15.2]{MR3528825}), the following question seems natural; if $V$ is integral,
an affirmative answer follows from \cite[Theorem 4.8]{MR3023856} of the first
author.

\begin{question}
	Suppose $V$ satisfies the Hypotheses \ref{hyp:delta_v}, $\tau = \Tan^m
	( \| V \|, \cdot )_\natural$ is the tangent plane%
	\begin{footnote}
		{For $\| V \|$ almost all $x$, the closed cone $\Tan^m ( \| V
		\|, x )$ is an $m$ dimensional plane and $\tau (x)$ is the
		orthogonal projection retracting $\mathbf R^n$ onto $\Tan^m (
		\| V \|, x )$.}
	\end{footnote}%
	function, and $\Theta (x) \geq 1$ for $\| V \|$ almost all $x$, where
	$\Theta = \boldsymbol \Uptheta^m ( \| V \|, \cdot )$.  Does it follow
	that both functions $\tau$ and $\Theta$ are $( \| V \|, m )$
	approximately differentiable at $\| V \|$ almost all $x$, and, if so,
	does there hold---denoting $(\|V \|, m )$ approximate derivatives by
	the prefix ``$\ap$''---the equation
	\begin{equation*}
		\mathbf h ( V, x ) \bullet u = T ( \ap \Der \tau (x) \circ
		\tau (x) ) \bullet u + ( \ap \Der ( \log \circ \, \Theta ) (x)
		\circ \tau (x) ) (u) \quad \text{for $u \in \mathbf R^n$}
	\end{equation*}
	for $\| V \|$ almost all $x$, where the trace operator $T$ is as in
	\cite[15.1]{MR3528825}?
\end{question}

Concerning the significance of the Hypotheses \ref{hyp:mean_curvature}, we
recall that, if $p \geq m$, then $\spt \| V \|$ is in many ways well-behaved:
For instance, there holds
\begin{equation*}
	\boldsymbol \Uptheta^m_\ast ( \| V \|, x ) \geq 1 \quad \text{for $x
	\in \spt \| V \|$}
\end{equation*}
by \cite[Remark 2.7]{MR2537022} of the first author---in particular, $\spt \|
V \|$ has locally finite $\mathscr H^m$ measure---, the set $\spt \| V \|$ is
locally connected (see \cite[Corollary 6.14\,(3)]{MR3528825}), decompositions
of $V$ are locally finite (see \cite[Remark 6.11]{MR3528825}) and non-uniquely
refine the decomposition of $\spt \| V \|$ into connected components (see
\cite[Remark 6.13, Corollary 6.14\,(1)]{MR3528825}), connected components of
$\spt \| V \|$ are locally connected by paths of finite length (see
\cite[Theorem 14.2]{MR3528825}), and the resulting geodesic distance thereon
is a continuous Sobolev function with bounded generalised weak derivative, see
\cite[Theorem 6.8\,(1)]{MR3626845}.  A substantial challenge for the present
development arises from the fact that, if $p < m$, then $\spt \| V \|$ has
substantially less geometric significance: Whenever $X$ is an open subset of
$U$, there exists a varifold $V$ such that $\spt \| V \|$ equals the closure
of $X$ relative to $U$, see \cite[Example 14.1]{MR3528825}.  However, one is
at least assured that $\mathscr H^{m-p}$ almost all $x \in
\spt \| V \|$ satisfy the dichotomy
\begin{equation*}
	\text{either $\boldsymbol \Uptheta^m_\ast ( \| V \|, x ) \geq 1$}
	\quad \text{or $\boldsymbol \Uptheta^m ( \| V \|, x ) = 0$}
\end{equation*}
by \cite[Remark 2.11]{MR2537022}.

The next set of hypotheses concerns the formulation of a boundary condition
for varifolds.  This is complicated by the absence of a boundary operator as
is available for currents.  In this regard, the distribution $B \in \mathscr
D' ( U, \mathbf R^n)$ defined by
\begin{equation*}
	B ( \theta ) = ( \updelta V ) ( \theta ) + {\textstyle\int} \mathbf h
	( V,x ) \bullet \theta ( x)  \ud \| V \| \, x \quad \text{for $\theta
	\in \mathscr D ( U, \mathbf R^n )$}
\end{equation*}
may act as a replacement whenever $V$ satisfies the Hypotheses
\ref{hyp:delta_v}.  According to W.\ Allard in \cite[4.3]{MR0307015}, $\| B \|
= \| \updelta V \|_{\mathrm{sing}}$ \emph{in some sense} is the boundary of
$V$; here we have employed $\| \updelta V \|_{\mathrm{sing}}$ to denote the
unique Radon measure over $U$ such that
\begin{equation*}
	\| \updelta V \| = \| \updelta V \|_{\| V \|} + \| \updelta V
	\|_{\mathrm{sing}}.
\end{equation*}
Yet, it appears more accurate to consider $\| B \|$ as stemming from two
ingredients: firstly indeed, the \emph{geometric boundary} of $V$ but, on top
of that, the singular part of the distributional derivative of the tangent
plane function of $V$; in \cite[Example 15]{MR3701146} by the first author,
this is illustrated by a varifold $V$ associated with a properly embedded
submanifold-with-boundary $M$ of class $1$ of $\mathbf R^n$ such that the
support of $\| B ||$ is not contained in $\partial M$.  $\big($A more basic
example with $M$ of class $0$ but not of class $1$ is given by the varifold
associated with the boundary of an $m+1$ dimensional cube, see Footnote
\ref{footnote:convex-geometry} on page
\pageref{footnote:convex-geometry}.$\big)$

\begin{hypotheses} [Density and boundary] \label{hyp:boundary}
	Suppose $V$ and $W$ are $m$ and $m-1$ dimensional varifolds in an open
	subset $U$ of $\mathbf R^n$, respectively, $\| \updelta V \|$ and $\|
	\updelta W \|$ are Radon measures, $\boldsymbol \Uptheta^m ( \| V \|,
	x ) \geq 1$ for $\| V \|$ almost all $x$, $\boldsymbol \Uptheta^{m-1}
	( \| W \|, x) \geq 1$ for $\| W \|$ almost all $x$,
	\begin{gather*}
		\text{$W = 0$ if $m=2$}, \quad \text{$\| \updelta V \| \leq \|
		V \| \restrict | \mathbf h ( V, \cdot ) | + \| W \|$ if
		$m>2$}, \\
		\text{$\| \updelta W \|$ is absolutely continuous with respect
		to $\| W \|$ if $m > 3$}.
	\end{gather*}
\end{hypotheses}

The displayed inequality is equivalent to requiring $\| B \| \leq \| W \|$.
It implies that the geometric boundary of $V$ is contained in $W$ but not
necessarily equal to $W$.  There are good geometric reasons to consider the
stronger condition
\begin{equation*}
	| B ( \theta ) | \leq {\textstyle\int} | S_\natural^\perp ( \theta
	(x)) | \ud W \, (x,S) \quad \text{for $\theta \in \mathscr D ( U,
	\mathbf R^n )$}
\end{equation*}
which may be seen as the boundary part of F.\ Almgren's concept of
\emph{regular pair $(V,W)$} described in \cite[Subsection {4-3}]{MR0190856}.
This stronger condition is also employed by T.\ Ekholm, B.\ White, and D.\
Wienholtz in \cite[Section 7]{MR1888799}.  For our present purposes, the
weaker condition will be sufficient.  Finally, the last condition in the
Hypotheses \ref{hyp:boundary} excludes the presence of boundary for $W$.%
\footnote{\label{footnote:convex-geometry}%
	This condition is natural from the
	differential-geometric point of view.  However,
	following F.\ Almgren's original approach to compactness (see
	\cite[Theorem 10.8]{Almgren:Vari}), one might also study tuples $(V_0,
	\ldots, V_m)$ consisting of $i$ dimensional varifolds $V_i$ such that
	$V_{i-1}$ controls the boundary behaviour of $V_i$ for $i>0$.  This
	would include $m$ dimensional cubes, for instance.}

To formulate the next theorem, we recall that $V \weakD f$ denotes the
generalised $V$ weak derivative of $f$ whenever $f \in \mathbf T ( V )$, see
\cite[Definition 4.2]{MR3777387}, and that $\mathbf T_{\Bdry U} (V)$ denotes
the subclass of those members of $\mathbf T (V)$ which are nonnegative and
have zero boundary values on $\Bdry U$, see \cite[Definition
4.16]{MR3777387}.  It constitutes the varifold formulation of the novel type
of Sobolev-Poincaré inequality formulated in the context of
submanifolds-with-boundary of class $2$ in Corollary
\ref{Final-corollary:new-Sobolev-Poincare} below.

\begin{FinalThm} [see \ref{thm:new_sobolev_inequality}]
	\label{Thm:new-Sobolev-Poincare}

	Suppose $V$ and $W$ satisfy the Hypotheses \ref{hyp:boundary},
	\begin{gather*}
		\mathbf h (V, \cdot ) \in \mathbf L_{m-1}^{\mathrm{loc}} ( \|
		V \|, \mathbf R^n ), \quad \text{if $m>3$
		then $\mathbf h ( W, \cdot ) \in \mathbf L_{m-2}^\mathrm{loc}
		( \| W \|, \mathbf R^n)$}, \\
		f \in \mathbf T_{\Bdry U} ( V ) \cap \mathbf T_{\Bdry U} (W),
		\\
		\text{$| V \weakD f (x) | \leq 1$ for $\| V \|$ almost
		all $x$}, \quad \text{$| W \weakD f (x) | \leq 1$ for $\| W
		\|$ almost all $x$},
	\end{gather*}
	and $E = \{ x \with f(x) > 0 \}$.

	Then, there exists a Borel subset $Y$ of $\mathbf R$ such that
	\begin{equation*}
		f(x) \in Y \quad \text{for $\| V \|$ almost all $x$}
	\end{equation*}
	and such that, for some positive finite number $\Gamma$ determined
	by $m$,
	\begin{enumerate}
		\item if $m = 2$, then $\mathscr L^1 ( Y ) \leq \Gamma \big (
		\| V \| (E)^{1/2} + \| \updelta V \| ( E ) \big )$;
		\item if $m = 3$, then
		\begin{equation*}
			\mathscr L^1 (Y) \leq \Gamma \big ( \| V \| (E)^{1/3}
			+ {\textstyle\int_E} | \mathbf h ( V, \cdot ) |^2 \ud
			\| V \| + \| W \| (E)^{1/2} + \| \updelta W \| (E)
			\big );
		\end{equation*}
		\item if $m > 3$, then
		\begin{multline*}
			\mathscr L^1 ( Y ) \leq \Gamma \big ( \| V \| (
			E)^{1/m} + {\textstyle\int_E} | \mathbf h ( V, \cdot )
			|^{m-1} \ud \| V \|  \\
			+ \| W \| ( E)^{1/(m-1)} + {\textstyle\int_E} |
			\mathbf h (W,\cdot ) |^{m-2} \ud \| W \| \big ).
		\end{multline*}
	\end{enumerate}
\end{FinalThm}

The utility of these estimates stems from \cite[Theorem
B]{arXiv:2209.05955v2}: namely, if $V$ is indecomposable of type $\{ f \}$,
then $\spt f_\# \| V \|$ is an interval and satisfies the bound
\begin{equation*}
	\diam \spt f_\# \| V \| \leq \mathscr L^1 ( Y ).
\end{equation*}
Moreover, if $f$ is continuous, then $f [ \spt \| V \| ] \subset \spt f_\# \|
V \|$, see \cite[7.13\,(2)]{arXiv:2209.05955v2}. The resulting oscillation
estimate, is the key to establish the next two theorems.

\begin{FinalThm} [see
	\ref{thm:few_special_points}\,\eqref{item:few_special_points:p}]
	\label{Thm:varifold-density-lower-bound}

	Suppose $V$ and $W$ satisfy the Hypotheses \ref{hyp:boundary},
	if $m=2$ then $\| \updelta V \|$ is absolutely
	continuous with respect to $\| V \|$,\footnote{More
	generally, absolute continuity may be relaxed to $\| \updelta V \|
	\leq \| V \| \restrict | \mathbf h ( V, \cdot ) | + \| W \|$ and ``$W
	= 0$ if $m = 2$'' may be omitted from the Hypotheses
	\ref{hyp:boundary} for the present theorem.} $\| \updelta W \|$ is
	absolutely continuous with respect to $\| W \|$, $m-1 \leq p < m$,
	$\mathbf h (V, \cdot) \in \mathbf L_p^\mathrm{loc} ( \| V \|, \mathbf
	R^n)$, if $m>2$ then $\mathbf h (W, \cdot ) \in \mathbf
	L_{p-1}^{\mathrm{loc}} ( \| W \|, \mathbf R^n )$, and $V$ is
	indecomposable of type $\mathscr D ( U, \mathbf R )$.

	Then, there holds
	\begin{equation*}
		\text{either $\boldsymbol \Uptheta^m_\ast ( \| V \|, x ) \geq
		1$} \quad \text{or $\boldsymbol \Uptheta^{m-1}_\ast ( \| W \|,
		x ) \geq 1$}
	\end{equation*}
	for $\mathscr H^{m-p}$ almost all $x \in \spt \| V \|$; in particular,
	$\mathscr H^m \restrict \spt \| V \| \leq \| V \|$.
\end{FinalThm}

The result is already significant in case $W = 0$ because it
implies that indecomposability of type $\mathscr D ( U, \mathbf R )$ allows
to discard the alternative $\boldsymbol \Uptheta^m ( \| V \|, x ) = 0 $ from
the afore-mentioned dichotomy. In general, this alternative
may not be omitted as is shown by suitable decomposable varifolds, see
\cite[2.11]{MR2537022}.

\begin{FinalThm} [see \ref{thm:diameter_bound}]
	\label{Thm:varifold-diameter-estimate}

	Suppose $V$ and $W$ satisfy the Hypotheses \ref{hyp:boundary} with $U
	= \mathbf R^n$, $V$ is indecomposable of type $\mathscr D ( \mathbf
	R^n, \mathbf R )$, we have $(\| V \| + \| W \| ) ( \mathbf R^n ) <
	\infty$, and $d$ denotes the geodesic diameter of $\spt \| V \|$.

	Then, for some positive finite number $\Gamma$ determined by $m$,
	there holds
	\begin{enumerate}
		\item if $m = 2$, then $d \leq \Gamma \| \updelta V \| (
		\mathbf R^n )$;
		\item if $m=3$, then $d \leq \Gamma \big ( {\textstyle\int} |
		\mathbf h ( V, \cdot ) |^2 \ud \| V \| + \| \updelta W \|
		( \mathbf R^n ) \big )$; and,
		\item if $m>3$, then $d \leq \Gamma \big ( {\textstyle\int} |
		\mathbf h ( V, \cdot ) |^{m-1} \ud \| V \| + {\textstyle\int}
		| \mathbf h (W, \cdot ) |^{m-2} \ud \| W \| \big )$.
	\end{enumerate}
\end{FinalThm}

In particular, if the sum on the right hand side of the inequality is finite,
then $\spt \| V \|$ is a compact subset of $\mathbf R^n$ and any two points of
$\spt \| V \|$ may be connected by a path of finite length in $\spt \| V \|$.
In analogy with the properties described for the case $p =
m$ of the Hypotheses \ref{hyp:mean_curvature}, an array of further questions
arises.  In the absence of boundary, the most immediate ones read as follows.

\begin{question} \label{question:further_lines_of_study}
	Suppose $V$ satisfies the Hypotheses \ref{hyp:boundary} with $W=0$,
	$V$ is indecomposable of type $\mathscr D ( U, \mathbf R )$, and
	$\mathbf h (V,\cdot) \in \mathbf L_{m-1}^{\mathrm{loc}} ( \| V \|,
	\mathbf R^n)$.\footnote{If $m>2$, the first and last
	condition are equivalent to the Hypotheses \ref{hyp:mean_curvature}
	with $p = m-1$; if $m=2$, they do not require $\| \updelta V \|$ to be
	absolutely continuous with respect to $\| V \|$.}
	\begin{enumerate}
		\item Is $\spt \| V \|$ locally connected?
		\item If so, is $\spt \| V \|$ locally connected by paths of
		finite length?
		\item If so, is the geodesic distance induced on connected
		components of $\spt \| V \|$ a Sobolev function with bounded
		generalised weak derivative and what are the continuity
		properties of this particular (or, any such) function?
	\end{enumerate}
\end{question}

The last item thereof relates to the possible study of
\emph{intermediate conditions on the mean curvature}, that is, to $1 < p < m$
in the Hypotheses \ref{hyp:mean_curvature} (see \cite[p.\,990]{MR3528825}); a
special case of that item was already raised as fifth question in the MSc
thesis of the second author supervised by the first author, see \cite[Section
A]{scharrer:MSc}.

\subsection{The challenges and their resolution}

The starting point of our line of research (see \cite{MR3777387} and the
previous two parts \cite{arXiv:2206.14046v2} and
\cite{arXiv:2209.05955v2} of our series) culminating in the present
final paper was the following a priori bound for geodesic
diameter by P.\ Topping.

\begingroup
	\begin{citing} [\textbf{Theorem} (see \protect{\cite[Theorem
	1.1]{MR2410779}})]
	
		Suppose that $M$ is a compact connected $m$ dimensional
		manifold (without boundary) of class $2$, that $F : M \to
		\mathbf R^n$ is an immersion of class $2$, that $g$ is the
		Riemannian metric on $M$ induced by $F$, and that $\sigma$ is
		the Riemannian distance associated with $(M,g)$.
		
		Then, for some positive finite number $\Gamma$ determined by
		$m$, there holds
		\begin{equation*}
			\diam_\sigma M \leq \Gamma {\textstyle\int_M} |
			\mathbf h ( F, x ) |^{m-1} \ud \mathscr
			H^m_\sigma \, x,
		\end{equation*}
		where the vector $\mathbf h (F, x )$ in $\mathbf R^n$ denotes
		the mean curvature of $F$ at $x$ in $M$.
	\end{citing}
\endgroup

\subsubsection{Aim 1: the generalisation to a varifold-setting}

The varifold-setting is the natural one to model generalised surfaces with
mean curvature.  The corresponding generalisation involved two challenges:
\renewcommand{\theenumi}{\roman{enumi}}
\begin{enumerate}
	\item \label{item:challenges:connectedness} how to rephrase the
	connectedness hypothesis for varifolds; and,
	\item \label{item:challenges:summability} how to handle the low
	summability of the mean curvature.
	\setcounter{enumi_memory}{\value{enumi}}
\end{enumerate}
\renewcommand{\theenumi}{\arabic{enumi}}
Regarding \eqref{item:challenges:connectedness}, the first notion of
connectedness developed for varifolds---termed \emph{indecomposability}---was
available from the theory of generalised weakly differentiable functions on
varifolds, see \cite[Definition 6.2]{MR3528825}.  Proceeding to
\eqref{item:challenges:summability}, we recall that the Hypotheses
\ref{hyp:mean_curvature} with $U = \mathbf R^n$ and $p = m-1$ do not guarantee
the required geometric significance of $\spt \| V \|$.

The key to resolve these two challenges is to treat them simultaneously by
means of the insight that \emph{indecomposability has a strong regularising
effect}; in particular, the known examples exhibiting undesirable behaviour of
$\spt \| V \|$, such as \cite[Example 14.1]{MR3528825} or the earlier one of a
similar structure in \cite[Example 1.2]{MR2537022}, are decomposable.  The
analysis carried out in \cite{scharrer:MSc}, not only yields
a complete generalisation of the above estimate of the geodesic diameter to
the varifold setting (i.e., Theorem \ref{Thm:varifold-diameter-estimate} with
$W = 0$) but it also yields a lower density bound for the varifolds involved
(i.e., Theorem \ref{Thm:varifold-density-lower-bound} with $W = 0$ and
$p=m-1$) ensuring the geometric significance of $\spt \| V \|$ via
\begin{equation*}
	\boldsymbol \Uptheta_\ast^m ( \| V \|, x ) \geq 1 \quad \text{for
	$\mathscr H^1$ almost all $x \in \spt \| V \|$}.
\end{equation*}
We note that both theorems describe a one-dimensional property of $\spt \| V
\|$: the length of geodesics therein and a lower density bound $\mathscr H^1$
almost everywhere.

\subsubsection{Aim 2: the treatment of surfaces with boundary}

To allow for boundary is evidently a prerequisite for applications to
geometric variational problems such as the Plateau problem.  An initial step
had been made by S.-H.\ Paeng: If $m=2$ and $M$ is a manifold-with-boundary,
then the estimate in the preceding theorem may be replaced by
\begin{equation*} \diam_\sigma M \leq \Gamma \big ( {\textstyle\int_M} |
\mathbf h(F,x)| \ud \mathscr H^2_\sigma \, x + \mathscr H^1_\sigma ( \partial
M ) \big ), \end{equation*} \emph{provided $(M,g)$ is convex}, see
\cite[Theorem 2\,(a)]{MR3183369}.  Therefore, to achieve the second aim, we
had to resolve the following three additional challenges:
\renewcommand{\theenumi}{\roman{enumi}}
\begin{enumerate}
	\setcounter{enumi}{\value{enumi_memory}}
	\item \label{item:challenges:convexity} for $m=2$, how to remove 
	the convexity hypothesis;
	\item \label{item:challenges:smooth-boundary} for $m>2$, how to take
	the geometry of the boundary into account; and,
	\item \label{item:challenges:varifold-boundary} how to phrase the
	boundary condition in a varifold-setting.
	\setcounter{enumi_memory}{\value{enumi}}
\end{enumerate}
\renewcommand{\theenumi}{\arabic{enumi}}
In \cite{MR3183369}, the convexity hypothesis is mainly used to ensure that
interior points of length-minimising geodesics cannot meet $\partial M$.
Viewing this as regularity consideration, a method sufficiently robust for the
varifold-setting is likely to accommodate \eqref{item:challenges:convexity} as
well.  For \eqref{item:challenges:smooth-boundary}, the $m-1$ dimensional
Hausdorff measure of $\partial M$ (raised to the appropriate power) does not
yield a valid estimate, see \ref{remark:zemas}.  Instead, the mean curvature
of $F | \partial M$ turns out to be an adequate choice; in particular, the
boundary is naturally represented by an $m-1$ dimensional varifold.  Regarding
\eqref{item:challenges:varifold-boundary}, we recall that the difficulty stems
from the lack of a boundary operator for varifolds and is resolved by
employing the conditions described in the Hypotheses \ref{hyp:boundary} above
as a substitute.

The second aim was then achieved in the first version of the present
publication (see
\href{https://arxiv.org/abs/1709.05504v1}{\path{arXiv:1709.05504v1}}) which
contained two additional insights with respect to \cite{scharrer:MSc}:
Firstly, it introduced the notion of indecomposability of type $\mathscr D (
U, \mathbf R )$.  This connectedness property is strictly weaker than
indecomposability but yet strong enough for the deduction of the intended
geometric consequences; in fact, this weakening of our hypotheses has later
turned out to be crucial for the applicability of our theory to geometric
variational problems (see Aim 3 below).  The second insight was the novel type
of Sobolev-Poincaré inequality formulated in Theorem
\ref{Thm:new-Sobolev-Poincare} which is foundational for both the
varifold-results on density and those on geodesic diameter in Theorems
\ref{Thm:varifold-density-lower-bound} and
\ref{Thm:varifold-diameter-estimate}.  To discuss the nature of Theorem
\ref{Thm:new-Sobolev-Poincare}, we shall now describe the corollary resulting
from it in the special case of properly embedded
\emph{connected} submanifolds.

\begin{Final-corollary} [Novel type of Sobolev-Poincaré inequality, see
	\ref{corollary:new_sobolev_inequality}]
	\label{Final-corollary:new-Sobolev-Poincare}

	Suppose $U$ is an open subset of $\mathbf R^n$, $M$ is a properly
	embedded, connected $m$ dimensional submanifold-with-boundary of class
	$2$ of $U$, $f : M \to \mathbf R$ is a function of class $1$ relative
	to $M$, $\spt f$ is compact, and
	\begin{equation*}
		E = M \cap \{ x \with f(x) \neq 0 \}, \quad \kappa = \sup \{ |
		\Der f (x) | \with x \in M \without \partial M \}.
	\end{equation*}

	Then, for some positive finite number $\Gamma$ determined by $m$,
	there holds
	\begin{multline*}
		\diam f[M] \leq \Gamma \big ( \mathscr H^m ( E \cap M)^{1/m} +
		{\textstyle\int_{E \cap M}} | \mathbf h ( M, x ) |^{m-1} \ud
		\mathscr H^m \, x \\
		+ \mathscr H^{m-1} (E \cap \partial M)^{1/(m-1)} +
		{\textstyle\int_{E \cap \partial M}} | \mathbf h ( \partial
		M, x) |^{m-2} \ud \mathscr H^{m-1} \, x \big) \kappa;
	\end{multline*}
	here, the summand $\int_{E \cap \partial M} | \mathbf h ( \partial M,
	x ) |^{m-2} \ud \mathscr H^{m-1} \, x$ shall be omitted if $m=2$.
\end{Final-corollary}

In case $M$ is compact, $\spt f$ is automatically compact and, applying the
isoperimetric inequality to $M$ and $\partial M$ yields the following
\emph{Poincaré inequality} (see \ref{corollary:poincare-inequality}) with a
positive finite number $\Delta$ determined by $m$:
\begin{equation*}
	\diam f[M] \leq \Delta \big ( {\textstyle\int_M} | \mathbf h ( M,x )
	|^{m-1} \ud \mathscr H^m \, x + {\textstyle\int_{\partial M}} |
	\mathbf h ( \partial M, x ) |^{m-2} \ud \mathscr H^{m-1} \, x \big )
	\kappa;
\end{equation*}
regarding $m=2$, we stipulate $\int_{\partial M} | \mathbf h ( \partial M, x)
|^0 \ud \mathscr H^1 \, x = \mathscr H^1 ( \partial M )$ by convention.
Earlier estimates of the essential oscillation of $f$, see \cite[Theorems
10.1\,(1d), 10.7\,(4), and 10.9\,(4)]{MR3528825}, differ in several aspects:
Firstly, they are limited to the case $\partial M = \varnothing$ but do not
require connectedness of $M$.  Secondly, they are local in nature rather than
global.  Thirdly, they involve the $m$-th power of the mean curvature (in
fact, an integral smallness condition thereon) instead of the power $m-1$.
Finally, they allow for $q$-th power integrals of $\Der f$ for some $m < q <
\infty$ which in fact turns out to be impossible in our setting, see
\ref{example:cylinder}.  The preceding Poincaré inequality is readily seen to
be equivalent to an a priori bound for the geodesic diameter of $M$.
Applying differential-topologic density results (to remove the embeddedness
hypothesis) then yields the following extension of P.\ Topping's result to
immersions of manifolds-with-boundary which corresponds to Theorem
\ref{Thm:varifold-diameter-estimate} in the varifold-setting.

\begin{Final-corollary} [Geodesic diameter bound for immersions, see
	\ref{corollary:smooth_diameter_bound}]
	\label{Final-corollary:diameter-bound-immersions}
	
	Suppose $M$ is a compact connected $m$ dimensional
	manifold-with-boundary of class $2$, $F : M \to \mathbf R^n$ is an
	immersion of class $2$, $g$ is the Riemannian metric on $M$ induced by
	$F$, and $\sigma$ is the Riemannian distance associated with $(M,g)$.
	
	Then, for some positive finite number $\Gamma$ determined by $m$,
	there holds
	\begin{equation*}
		\diam_\sigma M \leq \Gamma \big ( {\textstyle\int_M} | \mathbf
		h ( F, x ) |^{m-1} \ud \mathscr H^m_\sigma \, x +
		{\textstyle\int_{\partial M}} | \mathbf h ( F | \partial M, x)
		|^{m-2} \ud \mathscr H^{m-1}_\sigma \, x \big );
	\end{equation*}
	regarding $m=2$, we stipulate $\int_{\partial M} | \mathbf h (\partial
	M, x )|^0 \ud \mathscr H^1_\sigma \, x = \mathscr H^1_\sigma (
	\partial M )$.
\end{Final-corollary}

For $m \geq 3$, this leads to the following question which is open even if
$\mathbf h ( F, \cdot ) = 0$ and $F$ is an embedding.  If the answer were in
the affirmative, then Corollary
\ref{Final-corollary:diameter-bound-immersions} would be a consequence of the
resulting statement applied to both $F$ and $F | \partial M$.

\begin{question}
	May the summand $\int_{\partial M} | \mathbf h ( F, x ) |^{m-2} \ud
	\mathscr H^{m-1}_\sigma \, x$ in Corollary
	\ref{Final-corollary:diameter-bound-immersions} be replaced by the sum
	of the geodesic diameters of the connected components of $\partial M$
	computed with respect to the induced Riemannian distance on $\partial
	M$?
\end{question}

\subsubsection{Interlude: lower density ratio bounds}

The insights discussed so far allow us to deduce lower bounds on the density
based on the connectedness hypothesis.  To illustrate this, we state here the
underlying conditional lower density ratio bound in the case
of properly embedded submanifolds regarding the varifold-result on the density
in Theorem \ref{Thm:varifold-density-lower-bound}.

\begin{Final-corollary} [Conditional lower density ratio bound, see
	\ref{lemma:lower_density_ratio_bound}]
	\label{Final-corollary:lower-density-bound}

	Suppose $U$ is an open subset of $\mathbf R^n$, $M$ is a properly
	embedded, connected $m$ dimensional submanifold-with-boundary of $U$
	of class $2$, $a \in M$, $0 < r < \infty$, $\mathbf B (a,r) \subset
	U$, and $M \without \mathbf U (a,r) \neq \varnothing$.

	Then, for some positive finite number $\Gamma$ determined by $m$,
	there holds
	\begin{multline*}
		\Gamma^{-1} r \leq \mathscr H^m ( \mathbf U (a,r) \cap
		M)^{1/m} + {\textstyle\int_{\mathbf U (a,r) \cap M}} | \mathbf
		h (M, \cdot )|^{m-1} \ud \mathscr H^m \\
		+ \mathscr H^{m-1} ( \mathbf U (a,r) \cap \partial
		M)^{1/(m-1)} + {\textstyle\int_{\mathbf U(a,r) \cap \partial
		M}} | \mathbf h ( \partial M, \cdot ) |^{m-2} \ud \mathscr
		H^{m-1};
	\end{multline*}
	here, the summand $\int_{\mathbf U (a,r) \cap \partial M} | \mathbf h
	( \partial M, \cdot ) |^{m-2} \ud \mathscr H^{m-1}$ shall be omitted
	if $m=2$.
\end{Final-corollary}

This is a consequence of Corollary \ref{Final-corollary:new-Sobolev-Poincare}
generalised to Lipschitzian functions and applied with $f(x) = \sup \{
r-|x-a|,0 \}$ for $x \in U$.  Since $M \without \mathbf U (a,r) \neq
\varnothing$, our novel type of Sobolev-Poincaré inequality for this $f$ takes
the flavour of a \emph{Sobolev inequality} because $\sup \im f = \diam f[M]$.
Small spheres show that the hypothesis $M \without \mathbf U (a,r) \neq
\varnothing$ cannot be omitted.  Therefore, whereas $p = m$ is the critical
exponent in general, \emph{the critical exponent for the summability of the
mean curvature vector in the connected case is $p = m-1$}.  This is the key to
the regularising effects of indecomposability.  For further illustration, we
consider the following class of submanifolds (not necessarily properly
embedded).

\begin{hypotheses} \label{hyp:connected-submanifold}
	Suppose $1 \leq p < \infty$, $M$ is a connected $m$ dimensional
	submanifold of class $\infty$ of $\mathbf R^n$ which meets every
	compact subset of $\mathbf R^n$ in a set of finite $\mathscr H^m$
	measure such that its second fundamental form $\mathbf b ( M, \cdot )$
	satisfies $\int_M \| \mathbf b (M,x) \|^p \ud \mathscr H^m \, x <
	\infty$ and the divergence theorem holds in the sense that
	\begin{equation*}
		{\textstyle \int_M \Tan (M,x)_\natural \bullet \Der \theta (x)
		\ud \mathscr H^m \, x = - \int_M \mathbf h (M,x) \bullet
		\theta (x) \ud \mathscr H^m \, x}
	\end{equation*}
	for $\theta \in \mathscr D ( \mathbf R^n, \mathbf R^n )$, and $A =
	( \Clos M ) \cap \{ a \with \boldsymbol \Uptheta^m ( \mathscr H^m
	\restrict M, a ) = 0 \}$.
\end{hypotheses}

In view of
\cite[3.3, 6.1]{arXiv:2209.05955v2}, Theorem
\ref{Thm:varifold-density-lower-bound} guarantees that
\begin{equation*}
	\mathscr H^{m-p} ( A) = 0  \quad \text{in case $m-1 \leq p < m$}.
\end{equation*}
The next example shows not only that this bound is sharp but also that there
is no corresponding result in the range $1 \leq p < m-1$.  \emph{This exhibits
a discontinuity in $p$ regarding the optimal upper bound on the Hausdorff
dimension of $A$.}

\begingroup \hypertarget{Example}{}
\begin{citing} [\textbf{Example} (Sharpness of the lower density bounds, see
	\ref{thm:optimality_exceptional_set} and \ref{thm:example_p_small})]

	Suppose $m < n$.  Then, the following two statements hold.
	\begin{enumerate}
		\item If $m-1 < q < m$, then there exists $M$
		satisfying the Hypotheses \ref{hyp:connected-submanifold} for
		$1 \leq p < q$ such that $\mathscr H^{m-q} ( A ) > 0$ for the
		associated set $A$.
		\item If $m \geq 3$, then there exists $M$ satisfying
		the Hypotheses \ref{hyp:connected-submanifold} for $1 \leq p <
		m-1$ such that $\mathscr H^m ( A ) > 0$ for the associated set
		$A$.
	\end{enumerate}
\end{citing}
\endgroup

\subsubsection{Aim 3: applicability to geometric variational problems}

For this purpose, we were guided by the following two model cases:
\renewcommand{\theenumi}{\Alph{enumi}}
\begin{enumerate}
	\item \label{item:Plateau:chains} The Plateau problem for integral
	currents (more generally, $G$ chains).
	\item \label{item:Plateau:sets} The Plateau problem for sets using
	Čech homology.
\end{enumerate}
\renewcommand{\theenumi}{\arabic{enumi}}
The solution to each of these problems gives rise to an associated
varifold $V$ which is stationary away from the boundary.  This entails the
challenge:
\renewcommand{\theenumi}{\roman{enumi}}
\begin{enumerate}
	\setcounter{enumi}{\value{enumi_memory}}
	\item \label{item:challenges:indecomposability} Does the natural
	connectedness property of the solution ($G$ chain or set) entail a
	suitable indecomposability property of the associated varifold?
	\setcounter{enumi_memory}{\value{enumi}}
\end{enumerate}
Introducing indecomposability of type $\mathscr D ( U, \mathbf R )$,
\eqref{item:challenges:indecomposability} has been resolved in
\cite{arXiv:2209.05955v2}, see Theorem G therein for Case
\eqref{item:Plateau:chains} and Corollary 2 therein for Case
\eqref{item:Plateau:sets}; in particular, for Case \eqref{item:Plateau:sets},
it suffices to verify connectedness of $\spt \| V \|$.  On the other hand, for
Case \eqref{item:Plateau:sets}, two considerations which do not pose a
particular difficulty in Case \eqref{item:Plateau:chains}, see
\ref{lemma:first-variation-minimiser} and
\ref{remark:diameter_bound_mass_minimising_currents}, add two final challenges
to our list:
\begin{enumerate}
	\setcounter{enumi}{\value{enumi_memory}}
	\item \label{item:challenges:estimate} Is there an a
	priori estimate for $\| \updelta V \|$ in terms of the boundary data?
	\item \label{item:challenges:solution} Does connectedness of the
	boundary imply connectedness of $\spt \| V \|$?
\end{enumerate}
\renewcommand{\theenumi}{\arabic{enumi}}
For these challenges in the present Case \eqref{item:Plateau:sets}, a key
ingredient is provided by C.\ Labourie with \cite[Lemmata 1.2.2 and
2.2.1]{MR4489608} which ensure
\begin{equation*}
	B \subset \spt \| V \|,
\end{equation*}
see \ref{remark:Reifenberg-Plateau}.  The treatment of
\eqref{item:challenges:estimate} then uses the following theorem which rests
on a refinement of W.\ Allard's estimates regarding boundary behaviour in
\cite{MR0397520}.  It employs H.\ Federer's concept of reach from
\cite[4.1]{MR0110078}.
\begin{FinalThm} [see \ref{thm:estimate_variation_boundary}]
	\label{FinalThm:geom-var-I}

	Suppose $R = \reach (B)$, $V$ is an $m$ dimensional varifold in
	$\mathbf R^n$, $\| V \| ( B ) = 0$, $\spt \updelta V \subset B \subset
	\spt \| V \|$, $\boldsymbol \Uptheta^m ( \| V \|, x ) \geq 1$ for $\|
	V \|$ almost all $x$, and
	\begin{equation*}
		M = R^{-m} \sup \{ \| V \| \, \mathbf U (b,R/2) \with b \in B
		\}.
	\end{equation*}

	Then, for some positive finite number $\Gamma$ determined by $m$,
	there holds
	\begin{equation*}
		\| \updelta V \| \leq \Gamma M \mathscr H^{m-1} \restrict B.
	\end{equation*}
\end{FinalThm}
For Case \eqref{item:Plateau:sets}, the $\| V \|$ measure of
$\mathbf R^n$, and thus the number $M$, can readily be estimated in terms of
$B$ and $m$, see \ref{remark:Reifenberg-Plateau}.  Regarding
\eqref{item:challenges:solution}, we can rely on the study of connected
components of $\spt \| V \|$ from \cite[Corollary 6.14]{MR3528825} in
combination with the isoperimetric inequality to deduce connectedness of $\spt
\| V \|$ from that of its subset $B$, see \ref{lemma:plateau_connectedness}.
In combination with Theorem \ref{Thm:varifold-diameter-estimate}, this yields
the following theorem which is also applicable to varifolds constructed by
min-max methods or Brakke flows, see \ref{remark:min-max} and
\ref{remark:Brakke-flow-II}.

\begin{FinalThm} [see \ref{thm:Reifenberg-Plateau-diameter}]
	\label{FinalThm:geom-var-II}

	Suppose $B$ is connected, $V$ is an $m$ dimensional varifold in
	$\mathbf R^n$, $\| V \| ( \mathbf R^n ) < \infty$, $1 \leq \lambda <
	\infty$,
	\begin{equation*}
		B \subset \spt \| V \|, \quad \| \updelta V \| \leq \lambda
		\mathscr H^{m-1} \restrict B,
	\end{equation*}
	$\boldsymbol \Uptheta^m ( \| V \|, x ) \geq 1$ for $\| V \|$ almost
	all $x$, and $d$ is the geodesic diameter of $\spt \| V \|$.

	Then, there holds
	\begin{equation*}
		d \leq \Gamma_{\textup{\ref{thm:diameter_bound}}} (m) \lambda
		{\textstyle\int_B} | \mathbf h ( B, b ) |^{m-2} \ud \mathscr
		H^{m-1} \, b;
	\end{equation*}
	here, by convention, we stipulate $\int_B | \mathbf h (B,b) |^0 \ud
	\mathscr H^1 \, b = \mathscr H^1 (B)$ regarding $m=2$.
\end{FinalThm}

\subsection{Acknowledgements}

The authors thank Theodora Bourni, Guy David, Camillo De Lellis, Michael
Eichmair, Robert Haslhofer, Camille Labourie, Sławomir Kolasiński, Melanie
Rupf{}lin, Nicolau Sarquis Aiex, Richard Schoen, Felix Schulze, Yoshihiro
Tonegawa, Peter Topping, and Konstantinos Zemas for discussions related to the
present development over the years.  Apart of an extended research visit of
the first author at the University of Zurich for which he is grateful to
Camillo De Lellis, the initial version of this paper (see
\href{https://arxiv.org/abs/1709.05504v1}{\path{arXiv:1709.05504v1}}) was
carried out while the authors were affiliated to both, the Max Planck
Institute for Gravitational Physics (Albert Einstein Institute) and the
University of Potsdam.  During subsequent revisions the first author was
affiliated first with both, the Universities of Leipzig and the Max Planck
Institute for Mathematics in the Sciences, before---at the present affiliation
in Taiwan (R.\ O.\ C.)---he was supported by the grants with nos.\
108-2115-M-003-016-MY3, MOST 110-2115-M-003-017, MOST 111-2115-M-003-014, NSTC
112-2115-M-003-001, and NSTC 113-2918-I-003-002, by the National Science and
Technology Council (formerly termed Ministry of Science and Technology) and as
Center Scientist by the National Center for Theoretical Sciences, whereas the
second author was affiliated with the University of Warwick---supported by the
EPSRC as part of the MASDOC DTC with Grant No.\ EP/HO23364/1---, the Max
Planck Institute for Mathematics, and the present institution. The final touch
was made during an extended research visit of the first author at the
University of Cambridge for which he is grateful to Neshan Wickramasekera.

\section{Notation} \label{sec:notation}

\paragraph{Basic sources} As in Parts I and II (see
\cite{arXiv:2206.14046v2,arXiv:2209.05955v2}), our notation follows
\cite{MR3528825} and is thus largely consistent with H.\ Federer's terminology
for geometric measure theory listed in \cite[pp.\,669--676]{MR41:1976} and W.\
Allard's notation for varifolds introduced in \cite{MR0307015}.  This
includes, for distributions $S$, the variation measure $\| S \|$, see
\cite[2.18]{MR3528825}; for certain varifolds $V$ and sets $E$, the notion of
\emph{distributional boundary}, $V \boundary E$, of $E$ with respect to $V$,
see \cite[5.1]{MR3528825}; for certain varifolds $V$, concepts relating to
generalised $V$ weak differentiability---that is, the space $\mathbf T (V)$ of
generalised $V$ weakly differentiable real valued functions $f$, the
\emph{generalised $V$ weak derivative}, $V \weakD f$, for such $f$, and the
subspace $\mathbf T_G (V)$ of \emph{nonnegative} members of $\mathbf T (V)$
realising the concept of \emph{zero boundary values} on an open subset $G$ of
$\Bdry U$---, see \cite[8.3, 9.1]{MR3528825}.

\paragraph{Review}  Here, we list symbols not already reviewed in
\cite[Introduction, Section 1]{MR3528825}: the Grassmann manifold, $\mathbf
G(n,m)$, of all $m$ dimensional subspaces of $\mathbf R^n$, see
\cite[1.6.2]{MR41:1976}; the norms associated with inner products, $| \cdot
|$, see \cite[1.7.1]{MR41:1976};  the seminorm, $\| \cdot \|$, on $\Hom (V,W)$
associated with normed spaces $V$ and $W$, see \cite[1.10.5]{MR41:1976}; the
infimum and supremum, $\inf S$ and $\sup S$, of a subset $S$ of the extended
real numbers, see \cite[2.1.1]{MR41:1976}; the class $\mathbf 2^X$ of all
subsets of $X$, see \cite[2.1.2]{MR41:1976}; the restriction, $\phi \restrict
A$, of a measure $\phi$ to a set $A$, see \cite[2.1.2]{MR41:1976}; the
support, abbreviated $\spt \phi$, of a measure $\phi$, see
\cite[2.2.1]{MR41:1976}; the least Lipschitz constant, $\Lip f$, of a map $f$
between metric spaces, see \cite[2.2.7]{MR41:1976}; the Lebesgue spaces,
$\mathbf L_p ( \phi, Y )$, see \cite[2.4.12]{MR41:1976}; the support, denoted
$\spt f$, of a member $f$ of $\mathscr K (X)$, see \cite[2.5.13]{MR41:1976};
the variation, $\mathbf V_a^b g$, of $g$ from $a$ to $b$ for maps of the real
line into a complete metric space, see \cite[2.5.16]{MR41:1976}; the $n$
dimensional Lebesgue measure, $\mathscr L^n$, see \cite[2.6.5]{MR41:1976}; the
diameter of $S$, $\diam S$, see \cite[2.8.8]{MR41:1976}; the absolutely
continuous part $\psi_\phi$ of a measure $\psi$ with respect to $\phi$, see
\cite[2.9.1]{MR41:1976};%
\begin{footnote}%
	{As was stipulated in \cite{MR3528825} for similar notions,
	$\psi_\phi$ will also be employed in the case where one of the
	measures $\psi$ or $\phi$ on $X$ fails to be finite on bounded sets
	but there exist open sets $U_1, U_2, U_3, \ldots$ covering $X$ at
	which $\phi$ and $\psi$ are finite.}
\end{footnote}%
the derivative, $g'$, of a function on the real line, see
\cite[2.9.19]{MR41:1976}; the number $\boldsymbol \upalpha (m)$ and the $m$
dimensional Hausdorff measure, $\mathscr H^m$, for $0 \leq m < \infty$, see
\cite[2.10.2]{MR41:1976}; the $m$ dimensional densities, $\boldsymbol
\Uptheta^{\ast m} ( \phi, a)$, $\boldsymbol \Uptheta_\ast^m ( \phi, a )$, and
$\boldsymbol \Uptheta^m (\phi, a )$, see \cite[2.10.19]{MR41:1976}; the
differential, $\Der f$, see \cite[3.1.1]{MR41:1976}; the closed cones of
tangent and normal vectors, $\Tan (S,a)$ and $\Nor (S,a)$, see
\cite[3.1.21]{MR41:1976}; the unit sphere, $\mathbf S^{n-1}$, in $\mathbf
R^n$, see \cite[3.2.13]{MR41:1976}; the vector space, $\mathscr E (U,Y)$, of
functions of class $\infty$ and the support, $\spt T$, of a distribution $T$
in $U$ of type $Y$, see \cite[4.1.1]{MR41:1976}; the chain complex of
\emph{integral currents} in $\mathbf R^n$ with $m$-th chain group $\mathbf I_m
( \mathbf R^n )$ and boundary operator $\boundary$, see \cite[4.1.7,
4.1.24]{MR41:1976}; the member $\boldsymbol [ u,v \boldsymbol ]$ of $\mathbf
I_1 ( \mathbf R^n )$ associated with the line segment from $u$ to $v$, see
\cite[4.1.8]{MR41:1976}; the weight, $\| V \|$, of a varifold $V$, see
\cite[3.1]{MR0307015}; the image, $f_\# V$, of a varifold $V$ under a map $f$
of class $\infty$, see \cite[3.2]{MR0307015}; and, the first variation,
$\updelta V$, of a varifold $V$, see \cite[4.2]{MR0307015}.

\paragraph{Modification}  For the push forward, $f_\# \phi$, of a measure
$\phi$ by a function $f$, we use the definition in
\cite[3.9]{arXiv:2206.14046v2} which extends \cite[2.1.2]{MR41:1976}.

\paragraph{Amendments}

Modelled on \cite[4.1.7]{MR41:1976}, we employ the restriction notation, $\phi
\restrict f$, for the measure $\phi$ weighted by $f$ introduced in
\cite[3.6]{arXiv:2206.14046v2}; this weighted measure was discussed---without
name---in \cite[2.4.10]{MR41:1976}.  For subsets $A$ of Euclidean space, we
adopt the concept of reach and the symbol $\reach (A)$ from
\cite[4.1]{MR0110078} as well as those of \emph{approximate differentiability
of order $2$} with the corresponding \emph{approximate mean curvature},
denoted by $\ap \mathbf h (A, \cdot )$, from \cite[3.8]{MR3978264} and
\cite[6.9]{arXiv:2209.05955v2}.  For certain immersions $F$ into an open
subset~$U$ of $\mathbf R^n$, we use the concepts of \emph{mean curvature
vector}, denoted $\mathbf h (F,\cdot)$, of \emph{varifold associated with
$(F,U)$}, and of \emph{Riemannian distance} associated with $F$ as laid down
in \cite[6.10, 6.13, 10.1]{arXiv:2209.05955v2}, respectively.
The terms \emph{immersion} and \emph{embedding} are employed in accordance
with \cite[p.\,21]{MR1336822}.  Whenever $k$ is a positive integer or $k =
\infty$, we mean by a [\emph{sub}]\emph{manifold-with-boundary of class $k$} a
Hausdorff topological space with a countable base of its topology that is, in
the terminology of \cite[pp.\,29--30]{MR1336822}, a $C^k$ [sub]manifold.  For
manifolds-with-boundary $M$ of class $k$, we similarly adapt the notion of
\emph{chart of class $k$} and \emph{Riemannian metric of class $k-1$} from
\cite[p.\,29, p.\,95]{MR1336822} and denote by $\partial M$ its
\emph{boundary} as in \cite[p.\,30]{MR1336822}.  Whenever $G$ is a complete
normed commutative group as defined in \cite[3.1]{arXiv:2206.14046v2}, we
employ the following notation regarding the group $\mathscr R_m^{\textup{loc}}
( \mathbf R^n, G )$ of $m$ dimensional \emph{locally rectifiable $G$ chains
$S$} in $\mathbf R^n$, see \cite[4.5]{arXiv:2206.14046v2}: the notion of
weight measure, $\| S \|$, see \cite[4.5]{arXiv:2206.14046v2}; the
homomorphisms
\begin{equation*}
	f_\# : \mathscr R_m^{\textup{loc}} ( \mathbf R^n, G ) \to \mathscr
	R_m^{\textup{loc}} ( \mathbf R^\nu, G )
\end{equation*}
associated with locally Lipschitzian maps $f : \mathbf R^n \to \mathbf
R^\nu$, see \cite[4.6]{arXiv:2206.14046v2}; the Cartesian product
\begin{equation*}
	\times : \mathscr R_m^{\textup{loc}} ( \mathbf R^n, \mathbf Z ) \times
	\mathscr R_\mu^{\textup{loc}} ( \mathbf R^\nu, G ) \to \mathscr
	R_{m+\mu}^{\textup{loc}} ( \mathbf R^n \times \mathbf R^\nu, G ),
\end{equation*}
see \cite[4.7]{arXiv:2206.14046v2}; the isomorphism $\iota_{\mathbf R^n,m} :
\mathscr R_m^{\textup{loc}} (\mathbf R^n) \to \mathscr R_m^{\textup{loc}} (
\mathbf R^n, \mathbf Z )$, see \cite[5.1]{arXiv:2206.14046v2}; the chain
complex of \emph{integral $G$ chains} in $\mathbf R^n$ with $m$-th chain group
$\mathbf I_m ( \mathbf R^n, G)$---identified with a subgroup of $\mathscr
R_m^{\textup{loc}} ( \mathbf R^n, G)$---and boundary operator $\boundary_G$,
see \cite[5.11, 5.13, 5.17]{arXiv:2206.14046v2}; and,
\emph{indecomposability} of members of $\mathbf I_m ( \mathbf R^n, G )$, see
\cite[6.6]{arXiv:2209.05955v2}.  Finally, for certain varifolds, we make use
of the concept of \emph{indecomposability of type $\Psi$} introduced in
\cite[7.1]{arXiv:2209.05955v2}.

\paragraph{Definitions in the text}

Following \cite[6.6]{MR3626845}, the terms \emph{geodesic distance} and
\emph{geodesic diameter} are laid down in \ref{def:geodesic_distance}.  The
locally convex space $\mathscr C^k (M,Y)$ is defined in \ref{def:ck_space}.

\section{The special case of extrinsic diameter and no
boundary}

To highlight some of the principal ideas, we provide the short proof of a
special case of Theorem \ref{Thm:varifold-diameter-estimate} in this section;
the general case will be treated in \ref{thm:diameter_bound}.

\begin{theorem} \label{thm:topping}
	Suppose $m$ and $n$ are integers, $2 \leq m \leq n$, $V \in \mathbf
	V_m ( \mathbf R^n )$, $\spt \| V \|$ is compact, $\| \updelta V \|$ is
	a Radon measure, $\boldsymbol \Uptheta^m ( \| V \|, x ) \geq 1$ for
	$\| V \|$ almost all $x$, $V$ is indecomposable, and
	\begin{enumerate}
		\item if $m = 2$, then $\psi = \| \updelta V \|$, and
		\item if $m > 2$, then $\| \updelta V \|$ is absolutely
		continuous with respect to $\| V \|$ and $\psi = \| V \|
		\restrict | \mathbf h (V, \cdot ) |^{m-1}$.
	\end{enumerate}

	Then, there holds
	\begin{equation*}
		\diam \spt \| V \| \leq \Gamma \psi ( \mathbf R^n ),
	\end{equation*}
	where $\Gamma$ is a positive, finite number determined by $m$.
\end{theorem}

\begin{proof}
	By \cite[3.5\,(1b), 5.5\,(1)]{MR0307015}, $V$ is rectifiable and $\| V
	\| = \mathscr H^m \restrict \boldsymbol \Uptheta^m ( \| V \|, \cdot
	)$.

	Assume $\psi ( \mathbf R^n ) < \infty$. Let $A$ denote the set of all
	$a \in \spt \| V \|$ such that
	\begin{equation*}
		\limsup_{s \to 0+} \| V \| ( \mathbf B (a,s) )^{(1/m) - 1} \|
		\updelta V \| \, \mathbf B (a,s) < ( 2 \boldsymbol \upgamma
		(m) )^{-1}.
	\end{equation*}
	Note $\| V \| ( \mathbf R^n \without A ) = 0$ by \cite[2.8.18,
	2.9.5]{MR41:1976}, hence $A$ is dense in $\spt \| V \|$ and
	\begin{equation*}
		\diam \spt \| V \| = \sup \{ \diam p [ A ] \with p \in \mathbf
		O^\ast ( n , 1 ) \}.
	\end{equation*}

	Next, suppose $p \in \mathbf O^\ast ( n,1 )$ and let $\phi = p_\#
	\psi$. Then, for each $b \in p [ A ]$, there exists $0 < r < \infty$
	such that
	\begin{equation*}
		r \leq \Delta \phi \, \mathbf B (b,r),
	\end{equation*}
	where $\Delta = m ( 2 \boldsymbol \upgamma (m) )^m$; in fact, one
	may choose $a \in A$ with $p(a) = b$ and take
	\begin{equation*}
		r = \inf \big \{ s \with \| \updelta V \| \, \mathbf B (a,s) >
		( 2 \boldsymbol \upgamma (m)  )^{-1} \| V \| ( \mathbf B (a,s)
		)^{1-1/m} \big \},
	\end{equation*}
	hence $0 < r < \infty$ and $\| V \| \, \mathbf B (a,r) \geq ( 2 m
	\boldsymbol \upgamma (m) )^{-m} r^m$ by \cite[2.5]{MR2537022} implying
	\begin{gather*}
		\begin{aligned}
			( 2 \boldsymbol \upgamma (m) )^{-1} \| V \| (
			\mathbf B (a,r) )^{1-1/m} & \leq \| \updelta V \| \,
			\mathbf B (a,r) \\
			& \leq \| V \| ( \mathbf B (a,r) )^{1-1/(m-1)} \psi (
			\mathbf B (a,r) )^{1/(m-1)},
		\end{aligned} \\
		( 2 m \boldsymbol \upgamma (m)  )^{-1} r \leq \| V \| (
		\mathbf B (a,r) )^{1/m} \leq ( 2 \boldsymbol \upgamma (m)
		)^{m-1} \psi \, \mathbf B (a,r) .
	\end{gather*}
	Consequently,
	\begin{equation*}
		\mathscr L^1 ( p [ A ] ) \leq 2 \Delta \boldsymbol \upbeta (1)
		\psi ( \mathbf R^n ).
	\end{equation*}
	
	The proof will be concluded by showing
	\begin{gather*}
		\diam p [ A ] = \mathscr L^1 ( p [ A ] ).
	\end{gather*}
	If this were not the case, there would exist $b \in \mathbf R$ such
	that
	\begin{gather*}
		\inf p [ A ] < b < \sup p [ A ] , \quad \boldsymbol \Uptheta^1
		( p_\# \| V \|, b ) = 0
	\end{gather*}
	by \cite[2.2.17, 2.10.19\,(4)]{MR41:1976} since $(p_\# \| V \|) (
	\mathbf R \without p [ A ] ) = 0$, hence one could use
	\cite[4.10\,(1)]{MR0307015} or \cite[8.7, 8.29]{MR3528825} to infer
	\begin{gather*}
		\updelta ( V \restrict \{ (a,S) \with p(a) > b \} ) = (
		\updelta V ) \restrict \{ a \with p (a) > b \}
	\end{gather*}
	which would be incompatible with the indecomposability hypothesis on
	$V$.
\end{proof}

\begin{remark}
	Inspection of the final argument shows that the indecomposability
	hypothesis on $V$ may be weakened to indecomposability of type
	$\mathbf O^\ast (n,1)$.
\end{remark}

\begin{remark}
	In case $m = 2$, a related inequality for submanifolds involving the
	second fundamental form is provided in \cite[Lemma 1.2]{MR1243525}.
\end{remark}

\section{Sobolev-Poincaré inequality}

The purpose of this section is to establish (see
\ref{thm:new_sobolev_inequality}) our new Sobolev-Poincaré inequality, Theorem
\ref{Thm:new-Sobolev-Poincare}; the key ingredient therein is a monotonicity
lemma (see \ref{lemma:density_ratio}) based on the isoperimetric inequality.
Following this, the setting of connected submanifolds, Corollary
\ref{Final-corollary:new-Sobolev-Poincare}, results as corollary (see
\ref{corollary:new_sobolev_inequality}).  We also include a simpler version of
our Sobolev-Poincaré inequality for varifolds that are suitably indecomposable
and have no boundary (see \ref{corollary:sobolev_inequality}).

The lemma below will presently be applied only with $q =
\infty$.  We include the case $q < \infty$ since it yields a new proof of
previous Sobolev inequalities (see
\ref{remark:previous_sobolev_inequalities}).

\begin{lemma} \label{lemma:density_ratio}
	Suppose $m$ and $n$ are positive integers, $m \leq n$, $U$ is an open
	subset of $\mathbf R^n$, $V \in \mathbf V_m ( U )$, $\| \updelta V
	\|$ is a Radon measure, $\boldsymbol \Uptheta^m ( \| V \|, x ) \geq 1$
	for $\| V\|$ almost all $x$, $f \in \mathbf T_{\Bdry U} ( V )$, $0
	\leq s \leq r \leq \| V \|_{(\infty)} (f)$,
	\begin{align*}
		\text{either}, & \quad \text{$m=q=1$ and $\lambda = 1$}, \\
		\text{or}, & \quad \text{$m<q \leq \infty$ and $\lambda =
		\left ((1/m-1/q) \big / (1-1/q) \right )^{1-1/q}$},
	\end{align*}
	$V \weakD f \in \mathbf L_q ( \| V \|, \Hom ( \mathbf R^n, \mathbf R
	))$, $0 < \epsilon < \boldsymbol \upgamma ( m)^{-1}$,
	\begin{gather*}
		\| V \| \, \{ x \with f(x) \geq y \} < \infty, \quad \|
		\updelta V \| \, \{ x \with f(x) \geq y \} \leq \epsilon \,
		\| V \| ( \{ x \with f(x) \geq y \} )^{1-1/m}
	\end{gather*}
	for $s < y < r$, and $\delta = \boldsymbol \upgamma (m)^{-1} -
	\epsilon$.
	
	Then, the quantities
	\begin{align*}
	\| V \| ( \{ x \with f(x) \geq y \})^{1/m-1/q} ( \| V \|
	\restrict \{ x \with f(x) \geq y\})_{(q)}( V \weakD f ) +
	\delta \lambda y, & \quad \text{if $q < \infty$}, \\
	\| V \| ( \{ x \with f(x) \geq y \})^{1/m} \| V \|_{(\infty)}(
	V \weakD f ) + \delta \lambda y, & \quad \text{if $q =
		\infty$},
	\end{align*}
	are nonincreasing in $y$, for $s < y < r$.
\end{lemma}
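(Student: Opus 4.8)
The plan is to reduce the inequality to an ordinary differential inequality on $(s,r)$ for the nonincreasing functions
\begin{equation*}
	\mu (y) = \| V \| ( \{ x \with f(x) \geq y \} ), \qquad \psi (y) =
	{\textstyle\int_{\{ x \with f(x) \geq y \}}} | V \weakD f |^q \ud \| V
	\| ;
\end{equation*}
if $q = \infty$ one argues with $\mu$ together with the constant $\| V \|_{(\infty)} ( V \weakD f )$ in place of $\psi$, and the applications of Hölder's inequality below become the estimate by the essential supremum. One may assume $s < r$; since $r \leq \| V \|_{(\infty)} (f)$ this yields $\mu (y) > 0$ for $y \in (s,r)$, while $\mu$ and $\psi$ are finite on $(s,r)$ by hypothesis and $\psi > 0$ on $(s,r)$ follows from $(\ast)$ below and the coarea formula (were $\psi (y_0) = 0$ then $V \weakD f$ would vanish $\|V\|$~almost everywhere on the superlevel set $\{ x \with f(x) \geq y_0 \}$ of positive weight, forcing $V \boundary \{ x \with f(x) \geq y \} = 0$ for $\mathscr L^1$~almost all $y > y_0$, against $(\ast)$). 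The first step is an isoperimetric estimate for the superlevel sets $E(y) = \{ x \with f(x) \geq y \}$: for $\mathscr L^1$~almost all $y \in (s,r)$ the distributional boundary $V \boundary E(y)$ is representable by integration with respect to a Radon measure, and, as $f \in \mathbf T_{\Bdry U} (V)$ has zero boundary values and $y > 0$, the varifold $V \restrict E(y) \times \mathbf G (n,m)$ together with its first variation $( \updelta V ) \restrict E(y) - V \boundary E(y)$ (see \cite[5.1]{MR3528825}) extends to a varifold in $\mathbf R^n$ of finite weight $\mu(y)$ and density at least $1$ almost everywhere. Applying the general isoperimetric inequality (see \cite[3.7]{men-sch:isoperimetric}) to it and inserting the hypothesis $\| \updelta V \| ( E(y) ) \leq \varepsilon \, \mu(y)^{1-1/m}$ gives, with $\delta = \boldsymbol \gamma (m)^{-1} - \varepsilon$,
\begin{equation*}
	\delta \, \mu(y)^{1-1/m} \leq \| V \boundary E(y) \| ( \mathbf R^n )
	\quad \text{for $\mathscr L^1$~almost all $y \in (s,r)$}. \tag{$\ast$}
\end{equation*}

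Next I would convert $(\ast)$ into a pointwise bound. By the coarea formula for generalised weakly differentiable functions (cf.\ \ref{thm:coarea_lipschitz} and \cite[8.5, 8.29]{MR3528825}), $\| V \boundary E(y) \|$ is carried by the level set $\{ x \with f(x) = y \}$ and, for $y_1 < y_2$ in $(s,r)$, Hölder's inequality yields
\begin{align*}
	\int_{y_1}^{y_2} \| V \boundary E(y) \| ( \mathbf R^n ) \ud \mathscr
	L^1 \, y & = \int_{\{ x \with y_1 \leq f(x) < y_2 \}} | V \weakD f |
	\ud \| V \| \\
	& \leq \big ( \psi (y_1) - \psi (y_2) \big )^{1/q} \big ( \mu (y_1) -
	\mu (y_2) \big )^{1-1/q} ;
\end{align*}
in particular $\| V \boundary E(y) \| ( \mathbf R^n ) < \infty$ for $\mathscr L^1$~almost all $y \in (s,r)$. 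Since the left side is the mass on $(y_1,y_2)$ of an absolutely continuous measure with density $y \mapsto \| V \boundary E(y) \| ( \mathbf R^n )$, dividing by $y_2 - y_1$ and letting $y_1,y_2 \to y$ symmetrically, the Lebesgue differentiation theorem together with the almost everywhere differentiability of the monotone functions $\mu$ and $\psi$ gives $\| V \boundary E(y) \| ( \mathbf R^n ) \leq ( - \psi'(y) )^{1/q} ( - \mu'(y) )^{1-1/q}$ almost everywhere; combined with $(\ast)$ this becomes
\begin{equation*}
	\delta \, \mu(y)^{1-1/m} \leq ( - \psi'(y) )^{1/q} ( - \mu'(y)
	)^{1-1/q} \quad \text{for $\mathscr L^1$~almost all $y \in (s,r)$} .
	\tag{$\ast\ast$}
\end{equation*}

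To close the argument, set $\alpha = 1/m - 1/q$ and $\beta = 1/q$, so that $\alpha + \beta = 1/m$, $1 - \beta \in (0,1]$, and $\lambda = ( \alpha / (1-\beta) )^{1-\beta}$. Using $\mu,\psi > 0$ on $(s,r)$ and the weighted arithmetic-geometric mean inequality with weights $1-\beta$ and $\beta$, I would estimate almost everywhere on $(s,r)$
\begin{align*}
	- \frac{\mathrm d}{\mathrm d y} \big ( \mu^\alpha \psi^\beta \big ) &
	= (1-\beta) \big ( \tfrac{\alpha}{1-\beta} \mu^{\alpha-1} \psi^\beta (
	- \mu' ) \big ) + \beta \big ( \mu^\alpha \psi^{\beta-1} ( - \psi' )
	\big ) \\
	& \geq \lambda \, \mu^{1/m-1} ( - \mu' )^{1-\beta} ( - \psi' )^\beta
	\geq \delta \lambda ,
\end{align*}
the powers of $\psi$ cancelling, those of $\mu$ combining to $\mu^{1/m-1}$, and the last inequality being $(\ast\ast)$; when $q = 1$, necessarily $m = 1$ and this just restates $- \psi' \geq \delta$. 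For $q = \infty$ the second summand is absent, the coarea step instead gives $\| V \boundary E(y) \| ( \mathbf R^n ) \leq \| V \|_{(\infty)} ( V \weakD f ) ( - \mu'(y) )$, and the same computation applies with $\beta = 0$ and $\lambda = 1/m$. Consequently the quantity in the statement — the sum of a nonincreasing function (a product of powers of the nonincreasing positive functions $\mu$, $\psi$; respectively $\mu^{1/m}$ times a nonnegative constant) and the smooth increasing function $\delta \lambda y$ — is of locally bounded variation on $(s,r)$, has nonpositive approximate derivative almost everywhere, and the singular part of its distributional derivative coincides with that of the nonincreasing summand and hence is nonpositive; therefore it is nonincreasing on $(s,r)$, which is the assertion.

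The main obstacle is the second step. One must have the coarea formula for generalised weakly differentiable functions available in the precise form used — in particular that $y \mapsto V \boundary E(y)$ is measurable, representable by Radon measures for almost all $y$, and that $\| V \boundary E(y) \|$ is carried by the level set $\{ x \with f(x) = y \}$ — and one must verify that the sliced varifold $V \restrict E(y) \times \mathbf G (n,m)$ and its first variation genuinely extend to $\mathbf R^n$, which is exactly where the hypothesis $f \in \mathbf T_{\Bdry U}(V)$ enters, before carrying out the differentiation that passes from the integrated inequality to $(\ast\ast)$. The remaining delicate point is the choice of weights in the arithmetic-geometric mean inequality, which is dictated by the requirement that it reproduce exactly the constant $\lambda$ of the statement.
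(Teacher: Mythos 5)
Your proposal is correct and follows essentially the same route as the paper's proof: the isoperimetric inequality applied to the restriction $V \restrict \{ x \with f(x) \geq y \} \times \mathbf G (n,m)$ pushed forward to $\mathbf R^n$ (where $f \in \mathbf T_{\Bdry U}(V)$ enters), the coarea/Hölder step converting $\| V \boundary E(y) \|$ into $(-\psi')^{1/q}(-\mu')^{1-1/q}$, and the weighted arithmetic–geometric mean inequality producing the derivative of $\mu^{1/m-1/q}\psi^{1/q}$ with exactly the constant $\delta\lambda$. The paper merely works with the reversed variable $\upsilon = -y$ and the exponents $1-1/q$, $1-1/m$, and closes the integration step by Federer's \cite[2.9.19]{MR41:1976}, which is the same monotonicity argument you give.
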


\begin{proof}
	We treat the case $m < q < \infty$.  The cases $m=q=1$ and $q =
	\infty$ follow by a similar but simpler argument.  Abbreviate $\alpha
	= 1-1/q$ and $\beta = 1-1/m$.  Let $i : U \to \mathbf R^n$ denote the
	inclusion, define
	\begin{gather*}
	E(\upsilon) = \{ x \with f(x) \geq -\upsilon \}, \quad g
	(\upsilon) = \| V \| ( E (\upsilon) ), \quad G ( \upsilon ) =
	g( \upsilon )^{1-\beta/\alpha}, \\
	{\textstyle h (\upsilon) = \int_{E(\upsilon)} | V \weakD f |^q
		\ud \| V \|}, \quad W_\upsilon = i_\# ( V \restrict
	E(\upsilon) \times \mathbf G (n,m) ) \in \mathbf V_m ( \mathbf
	R^n )
	\end{gather*}
	for $-r < \upsilon < -s$, and notice that
	\begin{equation*}
	\boldsymbol \Uptheta^m (\| W_\upsilon \|, x ) \geq 1 \quad
	\text{for $\| W_\upsilon \|$ almost all $x$}
	\end{equation*}
	by \cite[2.8.9, 2.8.18, 2.9.11]{MR41:1976}.  Furthermore, as $( \| V
	\| + \| \updelta V \| ) \, \{ x \with f(x) = \upsilon \} = 0$ for all
	but countably many $\upsilon$, we have (see \cite[9.1]{MR3528825})
	\begin{equation*}
		\| \updelta W_\upsilon \| \leq i_\# \big ( \| \updelta V \|
		\restrict E(\upsilon) + \| V \boundary E (\upsilon) \| \big)
		\quad \text{for $\mathscr L^1$ almost all $-r < \upsilon <
		-s$}.
	\end{equation*}
	
	For such $\upsilon$, the isoperimetric inequality (see \cite[3.5,
	3.7]{MR3777387}) yields
	\begin{equation*}
		\boldsymbol \upgamma (m )^{-1} g(\upsilon)^{1-1/m} \leq  \|
		\updelta V \| ( E(\upsilon) )  + \| V \boundary E(\upsilon) \|
		( U ).
	\end{equation*}
	In view of \cite[8.29]{MR3528825} and \cite[2.9.19]{MR41:1976}, we
	deduce the inequalities
	\begin{gather*}
		0 < ( \boldsymbol \upgamma (m)^{-1} - \epsilon )
		g(\upsilon)^{1-1/m} \leq \| V \boundary E(\upsilon) \| ( U )
		\leq g'(\upsilon)^{1-1/q} h'(\upsilon)^{1/q}, \\
		\delta \lambda = ( \boldsymbol \upgamma(m)^{-1} - \epsilon )
		(1-\beta/\alpha)^\alpha  \leq (g^{1-\beta/\alpha} )'
		(\upsilon)^\alpha h' (\upsilon)^{1-\alpha} =
		G'(\upsilon)^\alpha h'(\upsilon)^{1-\alpha}
	\end{gather*}
	for $\mathscr L^1$ almost all $- r < \upsilon < -s$.  Therefore,
	noting $0 < \int_{-r}^\upsilon h' \ud \mathscr L^1 \leq h(\upsilon)$
	for $- r < \upsilon < -s$ by \cite[2.9.19]{MR41:1976}, we obtain
	(using the inequality relating geometric and arithmetic means)
	\begin{equation*}
		\delta \lambda \leq \alpha G'(\upsilon) G(\upsilon)^{\alpha-1}
		h (\upsilon)^{1-\alpha} + (1-\alpha) h'(\upsilon)
		h(\upsilon)^{-\alpha} G(\upsilon)^\alpha = (G^\alpha
		h^{1-\alpha})'(\upsilon)
	\end{equation*}
	for $\mathscr L^1$ almost all $-r < \upsilon < -s$, whence the
	conclusion follows by integration with respect to $\mathscr L^1$
	using \cite[2.9.19]{MR41:1976}.
\end{proof}

\begin{remark}
	For $q = \infty$, the pattern of the preceding proof is
	that of \cite[8.3]{MR0307015}.
\end{remark}

\begin{remark} \label{remark:previous_sobolev_inequalities}
	The preceding lemma in particular entails the
	estimates \cite[10.1\,(2b)\,(2d)]{MR3528825} with a different,
	somewhat more explicit constant.
\end{remark}

We next gather the set of conditions on density and first variation that we
assume for both varifolds occurring in the Sobolev-Poincaré estimate
in \ref{thm:new_sobolev_inequality}.

\begin{miniremark} \label{miniremark:new_situation}
	Suppose $U$ is an open subset of $\mathbf R^n$, $V$ is a varifold
	in $U$, $m = \dim V$, $\| \updelta V \|$ is a Radon measure,
	$\boldsymbol \Uptheta^m ( \| V \|, x ) \geq 1$ for $\| V \|$ almost
	all $x$, and, either $m=1$ and $\phi = \| V \|$, $m=2$ and $\phi = \|
	\updelta V \|$, or $m > 2$, $\mathbf h ( V, \cdot) \in \mathbf
	L_{m-1}^\mathrm{loc} ( \| V \|, \mathbf R^n )$, and $\phi = \| V \|
	\restrict | \mathbf h ( V, \cdot ) |^{m-1}$.
\end{miniremark}

\begin{theorem} \label{thm:new_sobolev_inequality}
	Suppose $m$ and $n$ are
	integers, $2 \leq m \leq n$, $U$ is an open subset of $\mathbf R^n$,
	$V_1 \in \mathbf V_m ( U )$ and $V_2 \in \mathbf V_{m-1} ( U )$
	satisfy the conditions of \ref{miniremark:new_situation},
	\begin{gather*}
	V_2 = 0 \quad \text{if $m=2$}, \qquad \| \updelta V_1 \| \leq
	\| V_1 \| \restrict | \mathbf h ( V_1, \cdot ) | + \| V_2 \|
	\quad \text{if $m>2$}, \\
	\text{$\| \updelta V_2 \|$ is absolutely continuous with
		respect to $\| V_2 \|$} \quad \text{if $m > 3$}, \\
	f \in \mathbf T_{\Bdry U} ( V_i ), \qquad | V_i \weakD f
	(x) | \leq 1 \quad \text{for $\| V_i \|$ almost all $x$}, \\
	\text{$\phi_i$ are associated with $V_i$ as
		in \ref{miniremark:new_situation}},
	\end{gather*}
	for $i \in \{ 1,2 \}$,
	and $E = \{ x \with f(x) > 0 \}$.
	
	Then, there exists a Borel subset $Y$ of $\mathbf R$ such that
	\begin{gather*}
	f(x) \in Y \quad \text{for $\| V_1 \|$ almost all $x$}, \\
	\mathscr L^1 ( Y ) \leq \Gamma \big ( \| V_1 \| ( E )^{1/m} +
	\phi_1 (E) + \| V_2 \| ( E
	)^{1/(m-1)} + \phi_2 ( E ) \big ),
	\end{gather*}
	where $\Gamma$ is a positive finite number determined
	by $m$.
\end{theorem}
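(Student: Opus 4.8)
The plan is to run the monotonicity lemma~\ref{lemma:density_ratio} (in the case $q=\infty$, $\lambda=1$) on a carefully chosen exhaustion of the superlevel sets of $f$, and to convert the resulting differential inequality into a bound on the length of the image interval. First I would reduce to the situation where $\| V_1 \|(E) < \infty$ and all the relevant integrals on the right-hand side are finite, since otherwise there is nothing to prove. The crucial point is that the hypotheses of~\ref{lemma:density_ratio} require, for the levels $y$ under consideration, both $\| V_1 \|\{x : f(x) \ge y\} < \infty$ and the first-variation bound $\|\updelta V_1\|\{x : f(x)\ge y\} \le \varepsilon\, \| V_1 \|(\{x:f(x)\ge y\})^{1-1/m}$. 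The first is automatic below the essential supremum once $\|V_1\|(E)<\infty$; the second is where the mean-curvature and boundary terms $\phi_1(E)$ and $\|V_2\|(E)$, $\phi_2(E)$ must be absorbed. So the heart of the argument is to show that, after discarding a set of levels $y$ of controlled $\mathscr L^1$-measure — controlled precisely by the four quantities on the right — the remaining levels satisfy the smallness hypothesis on $\|\updelta V_1\|$ with some fixed $\varepsilon < \boldsymbol\gamma(m)^{-1}$, say $\varepsilon = \tfrac12\boldsymbol\gamma(m)^{-1}$.

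To make that absorption work I would use $\|\updelta V_1\| \le \|V_1\|\restrict|\mathbf h(V_1,\cdot)| + \|V_2\|$ together with the coarea/layer-cake identity: $\int_0^\infty \|V_2\|\{x: f(x) \ge y\}\,\mathrm d\mathscr L^1 y = \int_E f\,\mathrm d\|V_2\|$ and similarly for $\phi_1$, $\phi_2$. The set of ``bad'' levels — where $\|\updelta V_1\|\{f\ge y\}$ exceeds $\varepsilon\,\|V_1\|(\{f\ge y\})^{1-1/m}$ — has $\mathscr L^1$-measure bounded, via Chebyshev, by a dimensional constant times $\|V_1\|(E)^{1/m}(\phi_1(E)+\|V_2\|(E))$ roughly speaking; and one must further handle the boundary varifold $V_2$ by applying the same lemma to $V_2$ in dimension $m-1$, which is exactly why the theorem hypothesises $\|\updelta V_2\|$ absolutely continuous w.r.t. $\|V_2\|$ and $\mathbf h(V_2,\cdot)\in\mathbf L^{\mathrm{loc}}_{m-2}$ when $m>3$ (and $V_2=0$ when $m=2$, $\phi_2 = \|\updelta V_2\|$ when $m=3$). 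Iterating the estimate — bound the bad $V_1$-levels in terms of $V_2$-data, then bound the bad $V_2$-levels in terms of $\phi_2$ and $\|V_2\|(E)^{1/(m-1)}$ — closes the loop. On the set of good levels, lemma~\ref{lemma:density_ratio} gives that $y\mapsto \|V_1\|(\{f\ge y\})^{1/m}\,\|V_1\|_{(\infty)}(V_1\weakD f) + \delta y$ is nonincreasing; since $|V_1\weakD f|\le 1$ and $\|V_1\|(\{f\ge y\})\ge 0$, this forces the good levels to lie in an interval of length at most $\delta^{-1}\|V_1\|(E)^{1/m}$, i.e. a dimensional constant times $\|V_1\|(E)^{1/m}$.

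The set $Y$ is then constructed as the union of the good levels with the bad levels: $f(x)\in Y$ for $\|V_1\|$-a.e.\ $x$ holds because every level is either good or bad, and $\mathscr L^1(Y)$ is the sum of the two contributions, which is exactly the asserted bound. One subtlety requiring care: $f$ need not be constant on pieces of a decomposition of $V_1$, so the image need not be an interval — this is precisely why the conclusion is phrased via the Borel set $Y$ rather than $\diam f[\spt\|V_1\|]$; the monotonicity lemma only controls the \emph{superlevel-set} behaviour, and the bad-level set may genuinely be large (countably infinite pieces). I would also need the technical input that $f\in\mathbf T_{\Bdry U}(V_i)$ lets me apply~\ref{lemma:density_ratio} directly, and that the layer-cake formulas for $\phi_i$ are legitimate, using~\cite[2.10.27]{MR41:1976} and the definition of $\phi\restrict f$ from~\ref{def:restriction}.

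The main obstacle I expect is the bookkeeping in the ``bad-level absorption'': getting the mixed term $\|V_1\|(\{f\ge y\})^{1-1/m}$ on the right of the smallness hypothesis to combine cleanly with the layer-cake bounds requires splitting the levels dyadically according to the size of $\|V_1\|(\{f\ge y\})$ (or of $\|V_2\|(\{f\ge y\})$), applying Chebyshev on each dyadic band, and summing a geometric series — the exponents $1/m$ and $1/(m-1)$ are exactly what make the sums converge. Threading the boundary varifold $V_2$ through this, with its own one-lower-dimensional monotonicity and its own bad-level set, and making sure the constant depends only on $m$, is the delicate part; everything else is a routine application of the machinery set up in the preliminaries.
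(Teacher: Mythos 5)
Your overall architecture --- apply the monotonicity lemma~\ref{lemma:density_ratio} with $q=\infty$, isolate the levels where the first-variation smallness hypothesis fails, and charge those levels to the curvature terms --- is the right starting point, and the good-level part of your argument can be made to work (the lengths of maximal intervals of good levels telescope against $\|V_1\|(E)^{1/m}$ because the superlevel sets are nested). But the crucial step, bounding $\mathscr L^1$ of the bad levels by Chebyshev/layer-cake applied to the \emph{global} superlevel sets $\{x\with f(x)\geq y\}$, genuinely fails, and no dyadic re-summation repairs it. Test it on a cylinder $\mathbf S^{m-1}_\rho\times[0,L]$ with $f$ the height coordinate: there $g(y)=\|V_1\|\,\{x \with f(x)\geq y\}\approx\rho^{m-1}(L-y)$ and $\|\updelta V_1\|\,\{x\with f(x)\geq y\}\approx\rho^{m-2}(L-y)$, so a level is bad precisely when $L-y\gtrsim\rho$; the bad set thus has measure $\approx L\approx\phi_1(E)$, consistent with the theorem, but your Chebyshev weight gives $\int_0^L\|\updelta V_1\|\,\{x\with f(x)\geq y\}\,g(y)^{-(1-1/m)}\ud\mathscr L^1y\approx\rho^{-1/m}L^{1+1/m}$, which exceeds $\phi_1(E)$ by the unbounded factor $(L/\rho)^{1/m}$. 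The underlying problem is that the ratio $\|\updelta V_1\|\,\{x\with f(x)\geq y\}\big/g(y)^{1-1/m}$ aggregates mass and first variation from \emph{all} levels above $y$, so integrating it in $y$ double-counts over a range comparable to the very height you are trying to estimate.

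The fix is to localise in the image of~$f$ rather than in its superlevel sets, which is what the paper does. It pushes everything forward to~$\mathbf R$ ($\mu_i=f_\#\|V_i\|$, $\nu_i=f_\#\|\updelta V_i\|$, $\omega_i=f_\#\phi_i$), defines for each value~$b$ a stopping radius $r_i(b)=\sup\{s\with \text{$0\leq s<b$ and $\Delta_i\,\mu_i(\mathbf B(b,s))^{1/\dim V_i}\geq s$}\}$, and applies~\ref{lemma:density_ratio} not to~$f$ itself but to the tent functions $\sup\{r_i(b)+s-|f(x)-b|,0\}$; this yields the localised lower bound $\nu_i\,\mathbf B(b,r_i(b))\geq(2\boldsymbol\gamma(\dim V_i))^{-1}\mu_i(\mathbf B(b,r_i(b)))^{1-1/\dim V_i}$, whence by H\"older $r_i(b)\lesssim\omega_i\,\mathbf B(b,r_i(b))$ plus the $V_2$~contribution, and the Vitali covering theorem on~$\mathbf R$ then sums these with bounded overlap to give exactly $\mathscr L^1(\text{bad set})\lesssim\phi_1(E)+\phi_2(E)+\|V_2\|(E)^{1/(m-1)}+\|V_1\|(E)^{1/m}$ (for the cylinder, $r_1(b)\approx\rho\approx\omega_1\,\mathbf B(b,\rho)$, so Vitali correctly recovers~$L$). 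Two further points you gloss over: the assertion that $f(x)\in Y$ for $\|V_1\|$~almost all~$x$ cannot come from ``every level is good or bad'' (that would make $Y$ all of $\{y\with y\geq0\}$, of infinite measure); the paper instead proves that the \emph{complement} of the bad set carries no $\mu_1$~mass apart from countably many atoms, via a density-ratio differentiation argument. And the boundary varifold has to be threaded through the same localised stopping-time scheme (via the set $B=\{b\with r_1(b)>r_2(b)\}$), not through a separate global layer-cake.
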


\begin{proof}
	We assume $( \| V_i \| + \phi_i ) (E) < \infty$ for $i \in \{ 1,2 \}$;
	in particular, we have $\| \updelta V_i \| ( E ) < \infty$.  Define $I
	= \mathbf R \cap \{ y \with y > 0 \}$,
	\begin{equation*}
	\mu_i = f_\# \| V_i \|, \quad \nu_i = f_\# \| \updelta V_i \|,
	\quad \text{and} \quad \omega_i = f_\# \phi_i
	\end{equation*}
	for $i \in \{ 1,2 \}$.  Let $\alpha = \omega_1$ if $m=2$ and $\alpha =
	f_\# ( \| V_1 \| \restrict | \mathbf h (V_1,\cdot) | )$ if $m>2$.
	With
	\begin{equation*}
	\Delta_1 = 2 m \boldsymbol \upgamma (m), \quad \Delta_2 = \sup
	\big \{ 2(m-1) \boldsymbol \upgamma (m-1), 2 \Delta_1 ( 2
	\boldsymbol \upgamma ( m ))^{1/(m-1)} \big \},
	\end{equation*}
	we define
	\begin{equation*}
	\lambda_i = \Delta_i \, \mu_i ( I )^{1/\dim V_i} \quad
	\text{for $i \in \{ 1, 2 \}$}
	\end{equation*}
	and functions $r_i : \mathbf R \to \mathbf R$, for $i \in
	\{ 1, 2 \}$, by
	\begin{equation*}
	r_i (b) = \sup \big \{ s \with \text{$0 \leq s < b$ and
		$\Delta_i \, \mu_i ( \mathbf B (b,s) )^{1/\dim V_i} \geq s$}
	\big \} \quad \text{whenever $b \in \mathbf R$}.
	\end{equation*}
	Since the sets $( \mathbf R \times \mathbf R ) \cap \big \{ (b,s)
	\with \text{$0 \leq s < b$ and $\Delta_i \, \mu_i ( \mathbf B
	(b,s))^{1/\dim V_i} \geq s$} \big \}$ are relatively closed in $(
	\mathbf R \times \mathbf R ) \cap \{ (b,s) \with s < b \}$, we may
	deduce that $r_i$ are Borel functions for $i \in \{ 1,2 \}$.  We also
	note $r_i(b) \leq \lambda_i$ for $b \in \mathbf R$ and $i \in \{ 1,2
	\}$.  Let $C = \{ b \with \mu_1 \, \{ b \} > 0 \}$ and notice that
	$C$ is countable.  Moreover, we define
	\begin{equation*}
		Q_i = \mathbf R \cap \{ b \with r_i ( b ) > 0 \} \quad
		\text{for $i \in \{ 1, 2 \}$}, \qquad B = \{ b \with r_1 (b) >
		r_2 ( b ) \}.
	\end{equation*}
	Our two estimates below rest on the basic fact that
	\begin{equation*}
		\nu_i \, \mathbf B (b,r_i(b)) \geq (2 \boldsymbol
		\upgamma(\dim V_i))^{-1} \mu_i ( \mathbf B
		(b,r_i(b)))^{1-1/\dim V_i}
	\end{equation*}
	whenever $\lambda_i < b \in Q_i$ and $i \in \{ 1,2 \}$; in fact, we
	note \cite[8.12, 8.13, 9.9]{MR3528825} and apply, for small $s > 0$,
	\ref{lemma:density_ratio} with $m$, $V$, $s$, $r$, $q$, $f(x)$, and
	$\epsilon$ replaced by $\dim V_i$, $V_i$, $0$, $s$, $\infty$, $\sup
	\{ r_i(b) + s - | f(x)-b|, 0 \}$, and $( 2 \boldsymbol \upgamma(\dim
	V_i))^{-1}$.
	
	Next, the following two estimates will be proven
	\begin{equation*}
		\mathscr L^1 ( Q_2 \cap \{ b \with b > \lambda_2 \} ) \leq
		\Delta_3 \, \omega_2 (I), \quad \mathscr L^1 ( B \cap \{ b
		\with b > \lambda_1 \} ) \leq \Delta_4 \, \omega_1 (I),
	\end{equation*}
	where $\Delta_3 = 2^{m+1} \Delta_2 \boldsymbol \upgamma (m-1)^{m-2}$
	and $\Delta_4 = 2^{m(m-1)+3} \boldsymbol \upgamma (m)^{m-1} \Delta_1$.
	Whenever $\lambda_2 < b \in Q_2$, the basic fact and Hölder's
	inequality yield
	\begin{equation*}
		\Delta_2^{-1} r_2 (b) \leq \mu_2 ( \mathbf B ( b, r_2
		(b)))^{1/(m-1)} \leq ( 2 \boldsymbol \upgamma (m-1) )^{m-2}
		\omega_2 \, \mathbf B (b,r_2(b)),
	\end{equation*}
	whence we deduce the first estimate by Vitali's covering theorem
	(see \cite[2.8.5, 2.8.8]{MR41:1976} with $\delta = \diam$ and
	$\tau=3/2$).  To similarly prove the second estimate suppose
	$\lambda_1 < b \in B$.  We first notice that $b > r_1(b) > r_2(b)$
	yields
	\begin{align*}
		\mu_2 ( \mathbf B (b,r_1(b)) )^{1/(m-1)} & \leq \Delta_2^{-1}
		r_1(b) \\
		& \leq 2^{-1} (2 \boldsymbol \upgamma (m) )^{1/(1-m)} \mu_1 (
		\mathbf B (b,r_1(b))^{1/m} < \infty.
	\end{align*}
	Combining this estimate with the following consequence of the basic
	fact,
	\begin{align*}
		& \mu_1 ( \mathbf B  ( b,r_1(b)) )^{1/m} \leq ( 2 \boldsymbol
		\upgamma (m) )^{1/(m-1)} \nu_1 ( \mathbf B
		(b,r_1(b)))^{1/(m-1)} \\
		& \qquad \leq ( 2 \boldsymbol \upgamma (m) )^{1/(m-1)} \big (
		\alpha ( \mathbf B (b,r_1(b)))^{1/(m-1)} + \mu_2 ( \mathbf B
		(b,r_1(b)))^{1/(m-1)} \big ),
	\end{align*}
	we first obtain
	\begin{equation*}
		2^{-1} \mu_1 ( \mathbf B ( b,r_1(b)))^{1/m} \leq ( 2
		\boldsymbol \upgamma (m))^{1/(m-1)} \alpha ( \mathbf B (
		b,r_1(b)))^{1/(m-1)},
	\end{equation*}
	and then, using Hölder's inequality,
	\begin{equation*}
		\Delta_1^{-1} r_1(b) \leq \mu_1 ( \mathbf B (b,r_1(b)) )^{1/m}
		\leq 2^{m(m-1)} \boldsymbol \upgamma (m)^{m-1} \, \omega_1 \,
		\mathbf B (b,r_1(b)).
	\end{equation*}
	Vitali's covering theorem now yields the second estimate.
	
	We now define $Y = C \cup Q_1$. Since $Q_1 \subset
	B \cup Q_2 \subset I$, the preceding two estimates imply that the
	asserted property of $Y$ may be established by proving
	\begin{equation*}
		\mu_1 ( \mathbf R \without ( C \cup Q_1 )) = 0.
	\end{equation*}
	For this purpose, we define
	\begin{equation*}
		\Upsilon = ( \spt \mu_1 ) \cap \left \{ y \with \text{$y > 0$
		and $\limsup_{s \to 0+} \frac{\nu_1 \, \mathbf B (y,s)}{\mu_1
		( \mathbf B (y,s) )^{1-1/m}} \geq (2 \boldsymbol
		\upgamma(m))^{-1}$} \right \}
	\end{equation*}
	and notice that $\mu_1 ( \Upsilon \without C ) = 0$ by \cite[2.8.9,
	2.8.18, 2.9.5]{MR41:1976}.  The assertion then follows verifying $(
	\spt \mu_1 ) \without \Upsilon \subset Q_1 \cup \{ 0 \}$; in fact, for
	$0 < b \in ( \spt \mu_1 ) \without \Upsilon$ and small $s>0$, we
	note \cite[8.12, 8.13, 9.9]{MR3528825} and apply
	\ref{lemma:density_ratio} with $V$, $s$, $r$, $q$, $f(x)$, and
	$\epsilon$ replaced by $V_1$, $0$, $s$, $\infty$, $\sup \{ s-|f(x)-b|,
	0 \}$, and $(2 \boldsymbol \upgamma(m))^{-1}$, to infer $r_1 (b) > 0$.
\end{proof}

\begin{remark} \label{remark:sobolev_indecomposable}
	We recall from \cite[7.12, 7.13\,(1)]{arXiv:2209.05955v2} that, under
	the hypotheses of the preceding theorem, if $V_1$ is indecomposable of
	type $\{ f \}$, then
	\begin{equation*}
		\diam \spt f_\# \| V_1 \| \leq \mathscr L^1 (Y)
	\end{equation*}
	and if additionally $\| V_1 \| \, \{ x \with f(x) \leq y \} > 0$ for
	$0 < y < \infty$, then
	\begin{equation*}
		\diam \spt f_\# \| V_1 \| = \| V_1 \|_{(\infty)} (f).
	\end{equation*}
\end{remark}

\begin{remark} \label{remark:sobolev_poincare}
	We notice that $\mathbf T_{\Bdry U} (V_i) = \mathbf T (V_i) \cap \{ f
	\with f \geq 0 \}$ in case $U = \mathbf R^n$ by \cite[9.2]{MR3528825}.
\end{remark}

In the special case of \emph{connected} properly embedded
submanifolds of class $2$ and a function of class $1$ thereon, the following
corollary, Corollary \ref{Final-corollary:new-Sobolev-Poincare}, results.

\begin{corollary} \label{corollary:new_sobolev_inequality}
	Suppose $m$ and $n$ are integers, $2 \leq m \leq n$, $U$ is an open
	subset of $\mathbf R^n$, $M$ is a properly embedded, connected $m$
	dimensional submanifold-with-boundary of $U$ of class $2$, $f : M \to
	\mathbf R$ is of class $1$ relative to $M$, $\spt f$ is compact, $|
	\Der f (x) | \leq 1$ for $x \in M \without \partial M$, and $E = M
	\cap \{ x \with f(x) \neq 0 \}$.

	Then, there holds
	\begin{multline*}
		\diam f[M] \leq 2
		\Gamma_{\textup{\ref{thm:new_sobolev_inequality}}} (m) \big (
		\mathscr H^m ( E \cap M)^{1/m} + {\textstyle\int_{E \cap M}} |
		\mathbf h ( M, x ) |^{m-1} \ud \mathscr H^m \, x \\
		+ \mathscr H^{m-1} (E \cap \partial M)^{1/(m-1)} +
		{\textstyle\int_{E \cap \partial M}} | \mathbf h ( \partial
		M, x) |^{m-2} \ud \mathscr H^{m-1} \, x \big);
	\end{multline*}
	here, the summand $\int_{E \cap \partial M} | \mathbf h ( \partial M,
	x ) |^{m-2} \ud \mathscr H^{m-1} \, x$ shall be omitted if $m=2$.
\end{corollary}

\begin{proof}
	We recall \cite[6.14]{arXiv:2209.05955v2}. We define $V_1 \in
	\mathbf{RV}_m ( U )$ and $V_2 \in \mathbf{RV}_{m-1} ( U )$ such that
	\begin{equation*}
		\| V_1 \| = \mathscr H^m \restrict M, \quad \text{$V_2 = 0$ if
		$m=2$}, \quad \text{$\| V_2 \| = \mathscr H^{m-1} \restrict
		\partial M$ if $m>2$};
	\end{equation*}
	hence, $V_1$ is indecomposable of type $\mathscr D ( U, \mathbf R )$
	by \cite[7.9]{arXiv:2209.05955v2}.
	Employing \cite[3.1.22]{MR41:1976}, we construct a
	function $g : U \to \mathbf R$ of class $1$ with compact support such
	that $g|M = f$.  From \cite[8.7, 8.16, 9.2, 9.4]{MR3528825}, we infer
	that $|g| \in \mathbf T_{\Bdry U} ( V_i )$ with
	\begin{equation*}
		\big | V_i \weakD |g| (x) \big | \leq 1 \quad \text{for
		$\| V_i \|$ almost all $x$}
	\end{equation*}
	for $i \in \{ 1,2 \}$.  In view of
	\cite[7.13\,(2)]{arXiv:2209.05955v2} the conclusion now follows from
	\ref{thm:new_sobolev_inequality} and
	\ref{remark:sobolev_indecomposable} applied with $f$ replaced by $|g|$
	because $\diam f[M] \leq 2 \diam g[M]$, if $0 \in \im f$, and $\diam
	f[M] = \diam g[M]$ as $f[M]$ is an interval, if $0 \notin \im f$.
\end{proof}

To prepare for the case without boundary, we collect another
set of hypotheses on density and first variation.  It will be assumed to hold
in \ref{corollary:sobolev_inequality}.

\begin{miniremark} \label{miniremark:situation}
	Suppose $U$ is an open subset of $\mathbf R^n$, $V$ is a varifold
	in $U$, $2 \leq m = \dim V$, $\| \updelta V \|$ is a Radon measure,
	$\boldsymbol \Uptheta^m ( \| V \|, x ) \geq 1$ for $\| V \|$ almost
	all $x$, and, either $m=2$ and $\psi = \| \updelta V \|$, or $m > 2$,
	$\| \updelta V \|$ is absolutely continuous with respect to $\| V \|$,
	$\mathbf h ( V, \cdot) \in \mathbf L_{m-1}^\mathrm{loc} ( \| V \|,
	\mathbf R^n )$, and $\psi = \| V \| \restrict | \mathbf h ( V, \cdot )
	|^{m-1}$.
\end{miniremark}

\begin{corollary} \label{corollary:sobolev_inequality}
	Suppose $U$, $V$, and $\psi$ are as in \ref{miniremark:situation},
	$\Gamma = 2^{m+3} m \boldsymbol \upgamma (m)$, $f \in \mathbf T_{\Bdry
		U} (V)$, $V$ is indecomposable of type $\{ f \}$, and
	\begin{equation*}
	| V \weakD f(x) | \leq 1 \quad \text{for $\| V \|$ almost
		all $x$}.
	\end{equation*}
	
	Then, there holds
	\begin{equation*}
	\diam \spt f_\# \| V \| \leq \Gamma \big ( \| V \| ( \{ x
	\with f(x)>0 \})^{1/m} + \boldsymbol \upgamma (m)^{m-1} \psi \,
	\{ x \with f (x) > 0 \} \big).
	\end{equation*}
\end{corollary}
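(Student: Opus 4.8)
The plan is to deduce the corollary from Theorem~\ref{thm:new_sobolev_inequality} applied with $V_1 = V$ and $V_2 = 0$, together with Remark~\ref{remark:sobolev_indecomposable}, reading off the explicit constant from a simplification of the proof of that theorem. First I would check that the hypotheses of Theorem~\ref{thm:new_sobolev_inequality} are met: the conditions of~\ref{miniremark:new_situation} for $V_1 = V$ with $\phi_1 = \psi$ are exactly those furnished by~\ref{miniremark:situation}, noting that for $m > 2$ the absolute continuity of~$\| \updelta V \|$ with respect to~$\| V \|$ gives $\| \updelta V \| = \| V \| \restrict | \mathbf h ( V, \cdot ) |$, so that the required bound $\| \updelta V_1 \| \leq \| V_1 \| \restrict | \mathbf h ( V_1, \cdot ) | + \| V_2 \|$ holds; all remaining conditions, which involve $V_2 = 0$, hold trivially, and $\phi_2 = 0 = \| V_2 \|$. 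Theorem~\ref{thm:new_sobolev_inequality} then supplies a Borel set~$Y \subset \mathbf R$ with $f(x) \in Y$ for $\| V \|$~almost all~$x$ and $\mathscr L^1 ( Y ) \leq \Gamma_0 \big ( \| V \| ( E )^{1/m} + \psi ( E ) \big )$, where $E = \{ x \with f(x) > 0 \}$ and $\Gamma_0$ is determined by~$m$.

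To obtain the sharp constant~$\Gamma = 2^{m+3} m \boldsymbol \gamma (m)$ together with the factor~$\boldsymbol \gamma (m)^{m-1}$ multiplying~$\psi(E)$, I would rerun the proof of Theorem~\ref{thm:new_sobolev_inequality} in the special case $V_2 = 0$. There $\mu_2 = \omega_2 = 0$, hence $r_2 \equiv 0$, $Q_2 = \varnothing$, $B = Q_1$, and both the cross term $\mu_2 ( \mathbf B (b, r_1(b)))^{1/(m-1)}$ and the auxiliary factor~$2^{-1}$ entering the treatment of~$B$ in that proof disappear. Writing $\mu = f_\# \| V \|$, $\omega = f_\# \psi$, $\alpha = f_\# \| \updelta V \|$, and $I = \{ y \with y > 0 \}$, one has $\nu_1 = \alpha$, with $\alpha = \omega$ when $m = 2$ and $\alpha = f_\# ( \| V \| \restrict | \mathbf h ( V, \cdot ) | )$ when $m > 2$, so that H\"older's inequality yields $\alpha ( A ) \leq \mu ( A )^{(m-2)/(m-1)} \omega ( A )^{1/(m-1)}$; combining this with the isoperimetric inequality (the ``basic fact'' of that proof) and rearranging gives $\mu ( \mathbf B ( b, r_1(b)) )^{1/m} \leq 2^{m-1} \boldsymbol \gamma (m)^{m-1} \omega ( \mathbf B ( b, r_1(b)) )$ and hence
\begin{equation*}
	r_1 (b) \leq \Delta_1 \, \mu ( \mathbf B ( b, r_1(b)) )^{1/m} \leq 2^m m \boldsymbol \gamma (m)^m \, \omega ( \mathbf B ( b, r_1(b)) ) \qquad \text{whenever $\lambda_1 < b \in Q_1$},
\end{equation*}
with $\Delta_1 = 2m \boldsymbol \gamma (m)$ and $\lambda_1 = \Delta_1 \mu ( I )^{1/m} = 2m \boldsymbol \gamma (m) \| V \| ( E )^{1/m}$ as in that proof.

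Vitali's covering theorem, applied as there with $\delta = \diam$ and $\tau = 3/2$ (so that the dilation factor is~$4$), now gives $\mathscr L^1 ( Q_1 \cap \{ b \with b > \lambda_1 \} ) \leq 2^{m+3} m \boldsymbol \gamma (m)^m \, \omega ( I ) = 2^{m+3} m \boldsymbol \gamma (m)^m \, \psi ( E )$, while $\mathscr L^1 ( Q_1 \cap \{ b \with b \leq \lambda_1 \} ) \leq \lambda_1$ since $Q_1 \subset I$. As $Y = C \cup Q_1$ with $C$ countable, we obtain
\begin{equation*}
	\mathscr L^1 ( Y ) = \mathscr L^1 ( Q_1 ) \leq 2m \boldsymbol \gamma (m) \, \| V \| ( E )^{1/m} + 2^{m+3} m \boldsymbol \gamma (m)^m \, \psi ( E ) \leq \Gamma \big ( \| V \| ( E )^{1/m} + \boldsymbol \gamma (m)^{m-1} \psi ( E ) \big ),
\end{equation*}
the final inequality using $2 \leq 2^{m+3}$. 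Since $V$ is indecomposable of type~$\{ f \}$, Remark~\ref{remark:sobolev_indecomposable} (with $V_1 = V$ and $V_2 = 0$) then yields $\diam \spt f_\# \| V \| \leq \mathscr L^1 ( Y )$, which is the asserted estimate.

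I expect the principal difficulty to be purely bookkeeping: confirming that removing the cross term and the factor~$2^{-1}$ in the $V_2 = 0$ case, together with the displayed H\"older exponents and the dilation factor of Vitali's theorem, produces precisely the powers of~$2$, $m$, and~$\boldsymbol \gamma (m)$ claimed; and checking that the boundary-free case $m = 2$---in which $\psi = \| \updelta V \|$ plays the role taken by $\| V \| \restrict | \mathbf h ( V, \cdot ) |^{m-1}$ when $m > 2$---is subsumed under the same chain of inequalities, the H\"older step then degenerating to the identity $\alpha = \omega$.
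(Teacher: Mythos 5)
Your proposal is correct and follows essentially the same route as the paper: apply \ref{thm:new_sobolev_inequality} and \ref{remark:sobolev_indecomposable} with $V_1 = V$ and $V_2 = 0$, and then rerun the constant-tracking in the proof of \ref{thm:new_sobolev_inequality} to see that, with the cross term $\mu_2$ and the resulting factor $2^{-1}$ absent, one may take $\Delta_4 = 2^{m+2}\Delta_1\boldsymbol\gamma(m)^{m-1} = 2^{m+3}m\boldsymbol\gamma(m)^m$, which together with $\lambda_1 = 2m\boldsymbol\gamma(m)\,\|V\|(E)^{1/m}$ yields exactly the claimed $\Gamma$. Your bookkeeping (the Hölder step giving $\mu(\mathbf B(b,r_1(b)))^{1/m} \leq (2\boldsymbol\gamma(m))^{m-1}\omega(\mathbf B(b,r_1(b)))$ and the Vitali factor) matches the paper's displayed inequality and final constant.
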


\begin{proof}
	With a possibly larger number $\Gamma$, this follows from
	\ref{thm:new_sobolev_inequality} and \ref{remark:sobolev_indecomposable}
	with $V_1 = V$ and $V_2 = 0$.  We verify the eligibility of the
	present number $\Gamma$ by noting that, for $V_2=0$, we can take
	$\Delta_4 = 2^{m+2} \Delta_1 \boldsymbol \upgamma (m)^{m-1}$ in the
	proof of \ref{thm:new_sobolev_inequality}; in fact,
	\begin{equation*}
	\Delta_1^{-1} r_1 (b) \leq \mu_1 ( \mathbf B ( b, r_1
	(b)))^{1/m} \leq ( 2 \boldsymbol \upgamma (m) )^{m-1}
	\omega_1 \, \mathbf B (b,r_1(b))
	\end{equation*}
	whenever $\lambda_1 < b \in B$
	by the basic fact and Hölder's inequality.
\end{proof}

\begin{remark} \label{remark:diameter_seminorm}
	As in \ref{remark:sobolev_indecomposable}, we note
	that, in case $\| V \| \, \{ x \with f(x) \leq y \}
	> 0$ whenever $0 < y < \infty$, we
	have $\diam \spt f_\# \| V \| = \| V \|_{(\infty)} ( f)$.
\end{remark}

\section{Examples} \label{section:density_bounds}

In the present section, we construct (see
\ref{thm:optimality_exceptional_set} and \ref{thm:example_p_small}) the
\hyperlink{Example}{Example} mentioned the introduction which shows the
sharpness of Theorem \ref{Thm:varifold-density-lower-bound}.  As preparations,
we list an arithmetic formula and terminology for cylinders (see
\ref{miniremark:geometric_series}--\ref{miniremark:cylinders}) and then
indicate a procedure to smooth out corners (see
\ref{lemma:corners}--\ref{remark:corners}).

\begin{miniremark} \label{miniremark:geometric_series}
	If 
	$0 \leq x < 1$ and $i$ is a nonnegative integer, then
	\begin{equation*}
	\sum_{j=i}^\infty (j+1) x^j = (1-x)^{-2} \big ( (i+1) x^i - i
	x^{i+1} \big ).
	\end{equation*}
\end{miniremark}

\begin{miniremark} \label{miniremark:cylinders}
	Whenever $u \in \mathbf S^{n-1}$, $a \in \mathbf R^n$, $0 < r <
	\infty$, and $0 \leq h \leq \infty$, we define
	\begin{align*}
	Z(a,r,u,h) & = \mathbf R^n \cap \big \{ x \with |x-a|^2 =
	((x-a) \bullet u )^2 + r^2, 0 \leq (x-a) \bullet u \leq h
	\big \}.
	\end{align*}
\end{miniremark}

\begin{lemma} \label{lemma:corners}
	Suppose $n$ is an integer, $n \geq 2$, $Y$ is an $n - 1$ dimensional
	submanifold-with-boundary of class $\infty$ of $\mathbf R^{n-1}$,
	$\partial Y$ is connected and compact, $\epsilon > 0$, and,
	identifying $\mathbf R^n \simeq \mathbf R^{n-1} \times \mathbf R$, the
	subsets $Q$ and $U$ of $\mathbf R^n$ satisfy
	\begin{equation*}
		Q  \simeq Y \times \{ t \with 0 \leq t < \infty \}, \quad U
		\simeq ( Y \cap \{ y \with \dist (y, \partial Y ) < \epsilon
		\} ) \times \{ z \with 0 \leq z < \epsilon \}.
	\end{equation*}
	
	Then, there exists a properly embedded $n$ dimensional
	submanifold-with-boundary $M$ of class $\infty$ of $\mathbf R^n$ such
	that $\partial M$ is connected and $M \without U = Q \without U$.
\end{lemma}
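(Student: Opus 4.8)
The assertion is local around the corner locus $\partial Y \times \{ 0 \}$ of $Q$, and the plan is to flatten it by a collar and then round the corner by an explicit formula.

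First I would fix a collar. By the collar neighbourhood theorem (using compactness of $\partial Y$) there are $0 < \rho < \varepsilon$ and an embedding $\Phi : \partial Y \times [0,\rho) \to Y$ of class~$\infty$ with $\Phi(y,0) = y$ for $y \in \partial Y$; shrinking $\rho$ and using uniform continuity of $\Phi$ on a compact subset of its domain, I may also arrange $\dist ( \Phi(y,s), \partial Y ) < \varepsilon$ for all $(y,s)$. Composing with the inclusions, $\Psi(y,s,t) = ( \Phi(y,s), t )$ then defines a diffeomorphism of the manifold with corners $\partial Y \times [0,\rho) \times [0,\infty)$ onto a relatively open neighbourhood of the corner locus in $Q$, and $\Psi ( \partial Y \times [0,\rho) \times [0,\rho) ) \subset U$.

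Next I would construct the planar model region $R$. Pick $\zeta \in \mathscr E ( \mathbf R, \mathbf R )$ nondecreasing with $\zeta^{(i)}(x) = 0$ for $x \leq 0$ and every nonnegative integer~$i$, and with $\zeta(x) > 0$ and $\zeta'(x) > 0$ for $x > 0$ (for instance $\zeta(x) = \exp(-1/x)$ for $x > 0$), put $F(s,t) = \zeta(\rho/8 - s) + \zeta(\rho/8 - t) - \zeta(\rho/8)$, and let $R = \{ (s,t) \with F(s,t) \leq 0 \}$. One verifies: $R$ is contained in the closed first quadrant $K$; $R$ coincides with $K$ wherever $\max \{ s,t \} \geq \rho/8$; the gradient of $F$ vanishes only where $s \geq \rho/8$ and $t \geq \rho/8$, hence nowhere on $F^{-1}(0)$, so $R$ is a two-dimensional submanifold-with-boundary of $\mathbf R^2$ of class~$\infty$; and $\partial R = F^{-1}(0)$ is a single connected curve, namely the rays $\{ t = 0 \} \cap \{ s \geq \rho/8 \}$ and $\{ s = 0 \} \cap \{ t \geq \rho/8 \}$ joined by an arc in $( 0, \rho/8 )^2$ tangent to each of them to infinite order at $(\rho/8,0)$ and $(0,\rho/8)$.

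Then I would glue, setting
\[
	M = \big( Q \without \Psi ( \partial Y \times [0,\rho/4)^2 ) \big) \cup \Psi \big( \partial Y \times ( R \cap ( [0,\rho) \times [0,\infty) ) ) \big) .
\]
Since $R$ coincides with $K$ on a neighbourhood of $\{ s = \rho/4 \} \cup \{ t = \rho/4 \}$, the two sets in the union agree wherever they overlap; hence $M$ is well defined and equals $Q$ outside $\Psi ( \partial Y \times [0,\rho/4)^2 )$, a subset of $U$, which yields $M \without U = Q \without U$. Near the corner locus, $M$ is modelled on $\partial Y \times R$, which is a submanifold-with-boundary because $\partial Y$ is a closed manifold and $R$ a submanifold-with-boundary; everywhere else $M = Q$, which there has no corner points since the entire corner locus of $Q$ lies inside $\Psi ( \partial Y \times [0,\rho/4)^2 )$; and near the seam both descriptions reduce to $Q$. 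Thus $M$ is a properly embedded submanifold-with-boundary of class~$\infty$ of $\mathbf R^n$ — properness because $M \subset Q$, $M = Q$ off a compact set, and within the collar chart $M$ corresponds to the closed set $\partial Y \times ( R \cap ( [0,\rho) \times [0,\infty) ) )$. Finally, $\partial M$ is connected, being the union of the connected set $\Psi ( \partial Y \times \partial R )$ (connected since $\partial Y$ and $\partial R$ are) with the truncated face $( Y \without \Phi ( \partial Y \times [0,\rho/8) ) ) \times \{ 0 \}$ (connected since $Y$ is), these meeting along $\Phi ( \partial Y \times [\rho/8,\rho) ) \times \{ 0 \}$.

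The step I expect to be the main obstacle is the gluing: one has to ensure that $R$ coincides with the quadrant to infinite order exactly where the rounding is switched off, so that the patched $M$ is genuinely of class~$\infty$ and retains no residual corner. The infinite-order flatness of $\zeta$ at the origin is precisely what secures this.
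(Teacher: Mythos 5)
Your proposal is correct and follows essentially the same route as the paper: flatten a neighbourhood of the corner locus of $Q$ by means of a collar of $\partial Y$ and round the corner of the resulting planar quadrant model (your explicit $F(s,t)=\zeta(\rho/8-s)+\zeta(\rho/8-t)-\zeta(\rho/8)$ makes precise the two-dimensional step the paper dismisses as ``elementary''). The only point to tighten is that, to produce charts of $\mathbf R^n$ rather than merely of $Q$, the collar $\Phi$ should be taken two-sided, i.e.\ extended to a tubular neighbourhood of $\partial Y$ in $\mathbf R^{n-1}$ --- which is exactly what the paper's signed distance function $f$ with $|\Der f|=1$ and the submersion $g(y,z)=(f(y),z)$ provide.
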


\begin{proof}
	As $\partial Y$ is compact, we employ \cite[3.9,
	3.12]{arXiv:2209.05955v2} to construct $\delta > 0$ such that the
	function $f : G \to \mathbf R$, with $G = \mathbf R^{n-1} \cap \{ y
	\with \dist (y, \partial Y) < \delta \}$ and
	\begin{equation*}
		f(y) = \dist (y,\partial Y) \quad \text{if $y \in Y$}, \qquad
		f(y) = - \dist (y,\partial Y) \quad \text{else},
	\end{equation*}
	for $y \in G$, is of class $\infty$ and satisfies $| \Der f(y) | = 1$
	for $y \in G$.  Then, defining $g : G \times \mathbf R \to \mathbf R
	\times \mathbf R$ by $g(y,z) = (f(y),z)$ for $y \in G$ and $z \in
	\mathbf R$ and noting $\im \Der g (y,z) = \mathbf R \times \mathbf R$
	for $y \in G$ and $z \in \mathbf R$, the assertion reduces (e.g.,
	by \cite[3.1.18]{MR41:1976}) to the case $n=2$ and $Y = \{ y \with 0
	\leq y < \infty \}$ which is elementary.
\end{proof}

\begin{remark} \label{remark:corners}
	By induction on $n$, the preceding lemma implies the following
	proposition: \emph{If $2 \leq n \in \mathbf Z$, $- \infty < a_k < b_k
	< \infty$ for $k = 1, \ldots, n$, $\epsilon > 0$, and
	\begin{gather*}
		Q = \mathbf R^n \cap \{ x \with \textup{$a_k \leq x \bullet
		e_k \leq b_k$ for $k = 1, \ldots, n$} \}, \\
		U = Q \cap \big \{ x \with \dist (x, Q \cap \{ \chi \with
		\card \{ k \with \textup{$\chi \bullet e_k = a_k$ or $\chi
		\bullet e_k = b_k$} \} \geq 2 \}) < \epsilon \big \},
	\end{gather*}
	where $e_1, \ldots, e_n$ form the standard base of $\mathbf R^n$, then
	there exists a properly embedded $n$ dimensional
	submanifold-with-boundary $M$ of $\mathbf R^n$ of class $\infty$ such
	that $\partial M$ is connected and $M \without U = Q \without U$.}
\end{remark}

In the first example below related to the sharpness of
Theorem \ref{Thm:varifold-density-lower-bound}, we are able to control the
second fundamental form $\mathbf b (M,\cdot)$ instead of merely $\mathbf h
(M,\cdot)$.

\begin{theorem} \label{thm:optimality_exceptional_set}
	Whenever $m$ and $n$ are positive integers, $2 \leq m < n$, and $m-1 <
	q < m$, there exists a bounded, connected $m$ dimensional
	submanifold $M$ of class $\infty$ of $\mathbf R^n$ such that
	\begin{gather*}
		{\textstyle \int_M \| \mathbf b (M,x) \|^p \ud \mathscr H^m \,
		x } < \infty \quad \text{whenever $1 \leq p < q$}, \\
		{\textstyle \int_M \Tan (M,x)_\natural \bullet \Der \theta (x)
		\ud \mathscr H^m \, x = - \int_M \mathbf h (M,x) \bullet
		\theta (x) \ud \mathscr H^m \, x}
	\end{gather*}
	for $\theta \in \mathscr D ( \mathbf R^n, \mathbf R^n )$, and such
	that $A = ( \Clos M ) \without M$ satisfies
	\begin{equation*}
		\mathscr H^{m-q} ( A ) = \boldsymbol \upalpha (m-q) 2^{1-m+q},
		\qquad \boldsymbol \Uptheta^m ( \mathscr H^m \restrict M, a )
		= 0 \quad \text{for $a \in A$}.
	\end{equation*}
\end{theorem}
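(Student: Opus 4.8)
The plan is to build $M$ as a ``bouquet'' of ever-smaller round spheres strung along a linear Cantor set of dimension $m-q$ and joined to one another by thin round cylinders, the joints being rounded off by means of \ref{lemma:corners} and \ref{remark:corners}.

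First I would fix $\rho = 2^{-1/(m-q)}$; since $m-1<q<m$ we have $0<m-q<1$, hence $0<\rho<1/2$ and $\rho^{m-q}=1/2$, so that $2\rho^{m}<1$, $2\rho^{m-1}<1$ (as $q>1$), and, more generally, $2\rho^{m-p}<1$ \emph{precisely} for $p<q$ --- all immediate from $0<\rho<1$. In $\mathbf R\times\{0\}^{n-1}\subset\mathbf R^n$ I would place the self-similar Cantor set $A$ obtained from two disjoint closed unit intervals by repeatedly replacing each generation-$k$ interval by two subintervals of $\rho$ times its length, symmetrically positioned with a central gap; thus each generation has at most $2^{k+1}$ intervals of length $\rho^{k}$ and $A$ is their intersection. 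The equation $2\rho^{m-q}=1$ makes the covering of $A$ by the generation-$k$ intervals yield $\mathscr H^{m-q}(A)\le\boldsymbol\alpha(m-q)2^{1-m+q}$, and the same equation shows coarser covers do not perform better, so this value is attained --- equality also following from the mass distribution assigning $2^{-k-1}$ to each generation-$k$ interval together with \cite[2.10.19]{MR41:1976}. Writing $c_J$ for the centre of the central gap of a generation-$k$ interval $J$, I attach a round $m$-sphere $S_J\subset\mathbf R^n$ of radius $s_k=\rho^{k}/\log(k+2)$ centred near $c_J$, and for every $J$ with parent $J'$ a round cylindrical tube $T_J$ of radius $t_k=\rho^{k}/(k+2)^2$ and length comparable to $\rho^{k-1}$ running from $S_J$ to $S_{J'}$; finally I add one tube joining the two ``root'' spheres. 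Using the $n-m\geq1$ extra dimensions the $S_J$ and $T_J$ are made pairwise disjoint, the finitely many types of joint are rounded at scale comparable to $t_k$ via \ref{remark:corners} to produce a single connected embedded $m$-dimensional submanifold $M$ of class $\infty$ without boundary, and, choosing the positions of the finitely many low-generation pieces generically, one arranges $(\Clos M)\without M=A$ (each point of $A$ is a limit of the $c_J$ along a nested chain of intervals, and, since $s_k$ is small compared with $\rho^k$, no point of $A$ lies on a piece).

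Then come the verifications. Boundedness is clear, $\mathscr H^m(M)\lesssim\sum_k 2^{k+1}(s_k^{m}+t_k^{m-1}\rho^{k})<\infty$ since $2\rho^m<1$ and $2\rho^{m-1}<1$, and, exhausting $M$ by the unions $M_1\subset M_2\subset\cdots$ of the pieces of the first $j$ generations --- whose boundaries are unions of at most $2^{j+1}$ round $(m-1)$-spheres of radius $t_{j+1}$ with $\mathscr H^{m-1}(\partial M_j)\lesssim 2^{j+1}t_{j+1}^{m-1}\to0$ --- the divergence theorem on each $M_j$ passes in the limit to $\int_M\Tan(M,x)_\natural\bullet\Der\theta(x)\ud\mathscr H^m\,x=-\int_M\mathbf h(M,x)\bullet\theta(x)\ud\mathscr H^m\,x$. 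On a round $m$-sphere of radius $s$ one has $\|\mathbf b\|=\sqrt m\,/s$, on a round tube of radius $t$ one has $\|\mathbf b\|=\sqrt{m-1}\,/t$, and each rounded joint contributes $\|\mathbf b\|\lesssim 1/t_k$ on a set of measure $\lesssim t_k^{m}$; hence, with $C=C(m,p)$,
\[ \int_M\|\mathbf b(M,x)\|^{p}\ud\mathscr H^m\,x\ \lesssim\ \sum_{k\geq 1}2^{k+1}\big(s_k^{m-p}+t_k^{m-1-p}\rho^{k}+t_k^{m-p}\big)\ \lesssim\ \sum_{k\geq1}\big(2\rho^{m-p}\big)^{k}(k+2)^{C}, \]
which is finite for every $p\in[1,q)$ because $2\rho^{m-p}<1$ exactly when $p<q$ (this is the only place $p<q$ enters). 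For the density, fix $a\in A$; if $0<r\leq\rho^{k}$ then only the spheres, tubes and joints of intervals contained in, or immediately adjacent to, the generation-$k$ interval through $a$ meet $\mathbf B(a,r)$, so
\[ \boldsymbol\alpha(m)^{-1}r^{-m}\,\mathscr H^m\big(M\cap\mathbf B(a,r)\big)\ \lesssim\ (\log k)^{-m}+\sum_{l\geq0}\big(2\rho^{m}\big)^{l}\big(\log(k+l)\big)^{-m}\ \longrightarrow\ 0 \]
as $k\to\infty$, whence $\boldsymbol\Theta^m(\mathscr H^m\restrict M,a)=0$ for $a\in A$.

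The main obstacle is the construction rather than the estimates: one must realise the countable family of shrinking spheres and tubes as a \emph{single embedded} smooth submanifold of $\mathbf R^n$ with pairwise disjoint pieces --- routing the roughly $2^{k+1}$ tubes of generation $k$ without collision using only the $n-m\geq1$ available extra dimensions --- while simultaneously tuning the three scales $\rho^{k}$, $s_k$, $t_k$ so that finiteness of $\int_M\|\mathbf b\|^{p}$ for \emph{all} $p<q$, vanishing of the density along $A$, finiteness of $\mathscr H^m(M)$, and the prescribed value of $\mathscr H^{m-q}(A)$ hold at once; the rounding of the joints is exactly what \ref{lemma:corners} and \ref{remark:corners} are there to supply.
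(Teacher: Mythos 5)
Your strategy is in substance the paper's own: a self-similar Cantor set of dimension $m-q$ with contraction ratio $\rho=2^{-1/(m-q)}$ (so that $2\rho^{m-p}<1$ exactly when $p<q$), a tree of shrinking smooth pieces indexed by the construction intervals and joined by thin cylinders whose corners are rounded via \ref{lemma:corners} and \ref{remark:corners}, the first variation identity obtained from an exhaustion with $\mathscr H^{m-1}(\partial M_j)\to 0$, and the same three series estimates. The value $\boldsymbol\alpha(m-q)2^{1-m+q}$ and the summability of $\|\mathbf b\|^p$ for $p<q$ check out.

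The gap is in the geometry underlying the density claim. You route the tube $T_J$ of generation $k$ \emph{horizontally}, from the gap-centre $c_J$ to the gap-centre $c_{J'}$ of the parent $J'$. Any such connection must sweep, in its projection to the Cantor line, the entire inner half of $J$ (the half nearer $c_{J'}$), which contains a portion of $A$ of positive $\mathscr H^{m-q}$~measure; and all but countably many $a\in A$ lie under infinitely many ancestor tubes in this sense. For such an $a$ and such an ancestor generation $k'$, the tube $T_{J_{k'}}$ passes over $a$ at a transverse distance equal only to whatever offset it was given, and if that offset is smaller than $\rho^k$ for some $k>k'$ then $T_{J_{k'}}\cap\mathbf B(a,\rho^k)$ has $\mathscr H^m$~measure of order $t_{k'}^{m-1}\rho^{k}$, so that
\begin{equation*}
	\frac{t_{k'}^{m-1}\rho^{k}}{\rho^{km}} \;=\; t_{k'}^{m-1}\,\rho^{k(1-m)} \longrightarrow \infty \quad \text{as } k\to\infty ,
\end{equation*}
giving $\boldsymbol\Theta^{\ast m}(\mathscr H^m\restrict M,a)=\infty$ rather than $0$. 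Hence your assertion that, for $r\le\rho^k$, only pieces attached to intervals contained in or adjacent to the generation-$k$ interval through $a$ meet $\mathbf B(a,r)$ is false for the construction as described: it is exactly the ancestor tubes that violate it. To restore it one must route each generation-$k'$ tube at transverse distance comparable to $\rho^{k'}$ from the hyperplane containing $A$ — a separation from the \emph{limit set}, which your ``collision avoidance between tubes in the extra dimensions'' does not provide and which is the actual delicate point of the construction. The paper avoids the issue structurally: its cylinders run \emph{perpendicular} to the Cantor line, the generations are stacked at heights $s_0-s_i$ with $s_i=\sum_{j\ge i}(j+1)r_j$, so every piece of generation $i$ lies at distance between $s_{i+1}$ and $s_i$ from the limit plane, and a ball $\mathbf B(a,r)$ with $s_{i+1}\le r\le s_i$ meets only generations $\ge i$. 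Your construction can be repaired along these lines, but as written the key separation claim — and with it the conclusion $\boldsymbol\Theta^m(\mathscr H^m\restrict M,a)=0$ — does not hold.
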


\begin{proof}
	We assume $n = m+1$ and let $d = m-q$.  Whenever $J$ is a compact
	subinterval of $\mathbf{R}$, we denote by $\Phi( J )$ the family
	consisting of the two disjoint subintervals
	\begin{align*}
		\big \{ t \with \inf J \leq t \leq \inf J + 2^{-1/d} \diam J
		\big \}, \quad \big \{ t \with \sup J - 2^{-1/d} \diam J \leq
		t \leq \sup J \big \}
	\end{align*}
	of $J$.  Letting $G_0 = \{ \{ t \with -1/2 \leq t \leq 1/2 \} \}$, we
	define $G_i = \textstyle \bigcup \{ \Phi(S) \with S \in G_{i-1} \}$
	for every positive integer $i$ and $C = \bigcap_{i=0}^\infty \bigcup
	G_i$.  By \cite[2.10.28]{MR41:1976}, there holds
	\begin{equation*}
		\mathscr H^d ( C ) = \boldsymbol \upalpha (d) 2^{-d}.
	\end{equation*}
	For every nonnegative integer $i$, we let $r_i = 2^{-i/d}$ and $s_i =
	\sum_{j=i}^\infty (j+1) r_j$, hence $\diam J = r_i$ whenever $J \in
	G_i$ and, using \ref{miniremark:geometric_series}, we compute
	\begin{equation*}
		s_i = ( 1 - 2^{-1/d} )^{-1} \big ( i + (1-2^{-1/d})^{-1} \big)
		r_i.
	\end{equation*}
	
	Suppose $e_1, \ldots, e_n$ form the standard base of $\mathbf R^n$.
	We observe that \ref{lemma:corners} may be employed to construct a
	subset $N$ of the cube
	\begin{equation*}
	\mathbf R^n \cap \big \{ x \with \text{$0 < x \bullet e_n <1$
		and $| x \bullet e_k | < {\textstyle \frac 12} $ for $k = 1,
		\ldots, n-1$} \big \}
	\end{equation*}
	such that its union with (see \ref{miniremark:cylinders})
	\begin{equation*}
	Z \big ( {\textstyle \frac {r_1-1}2} e_1 + e_n,
	{\textstyle\frac{r_1}4}, e_n, \infty \big ) \cup Z \big (
	{\textstyle\frac{1-r_1}2} e_1 + e_n, {\textstyle \frac{r_1}4},
	e_n, \infty \big ) \cup Z \big ( 0, {\textstyle \frac 14},
	-e_n, \infty )
	\end{equation*}
	is a properly embedded, connected $m$ dimensional submanifold of
	class $\infty$ of $\mathbf R^n$.  In particular, there exists $0 \leq
	\kappa < \infty$ satisfying
	\begin{equation*}
	\mathscr H^m ( N ) \leq \kappa,\quad \sup \im \| \mathbf b (N,
	\cdot ) \| \leq \kappa.
	\end{equation*}
	With $N (a,r) = \mathbf R^n \cap \{ x \with r^{-1} (x-a) \in N
	\}$ for $a \in \mathbf R^n$ and $0 < r < \infty$, we use
	\begin{align*}
	X_i & = \big \{ Z \big ( {\textstyle \frac{\sup J+\inf J }2}
	e_1 + (s_0-s_i)e_n, {\textstyle \frac{r_i}4}, e_n, ir_i )
	\with J \in G_i \big \}, \\
	\Psi_i & = \big \{ N \big ( {\textstyle \frac{\sup J +\inf
			J}2} e_1 + (s_0-s_i+ir_i) e_n, r_i \big ) \with J \in G_i \big
	\},
	\end{align*}
	$H_i = \bigcup_{j=0}^i \bigcup ( X_j \cup \Psi_j )$, and $M_i = \{ x
	\with \text{$x \in H_i$ or $x-2(x\bullet e_n)e_n \in H_i$} \}$, to
	define
	\begin{equation*}
	{\textstyle M = \bigcup \{ M_i \with \text{$i$ is a
			nonnegative integer} \}}.
	\end{equation*}
	Clearly, $M$ is a connected $m$ dimensional submanifold of
	class $\infty$ of $\mathbf R^n$ and
	\begin{equation*}
	A = \mathbf R^n \cap \{ x \with \text{$x \bullet e_1 \in C$,
		$|x \bullet e_n| = s_0$, and $x \bullet e_k = 0$ for $k = 2,
		\ldots, n-1$} \},
	\end{equation*}
	where $A = ( \Clos M) \without M$.
	
	Next, we will show the following assertion.  \emph{There holds
	\begin{equation*}
		\mathscr H^m ( M \cap \mathbf B (a,r) ) \leq 2^{2+m/d}
		(1-2^{1-m/d})^{-2} ( m \boldsymbol \upalpha (m) + \kappa )
		(i+1)^{1+d-m} r^m
	\end{equation*}
	whenever $a \in A$, $i$ is a nonnegative integer, and $s_{i+1} \leq r
	\leq s_i$.}  For this purpose, we let $I = \{ t \with |t-a \bullet
	e_1| \leq r \}$ and firstly estimate
	\begin{equation*}
		\lambda_i = \card \{ J \with I \cap J \neq \varnothing, J \in
		G_i \} \leq 4 (1-2^{-1/d})^{-2} (i+1)^d;
	\end{equation*}
	in fact, $G_i$ is special for $\{ t \with -1/2 \leq t \leq 1/2 \}$
	by \cite[2.10.28\,(2)\,(4)]{MR41:1976}, hence
	\begin{equation*}
		\card \{ J \with I \supset J \in G_i \} r_i^d \leq (2r)^d \leq
		2 (1-2^{-1/d})^{-2} (i+1)^d r_i^d.
	\end{equation*}
	Since $2^{-i} \sum_{j=i}^\infty 2^j ( j+1 ) r_j^m \leq ( 1-2^{1-m/d}
	)^{-2} (i+1) r_i^m$ by \ref{miniremark:geometric_series}, we
	estimate
	\begin{equation*}
		\mathscr H^m ( M \cap \mathbf B (a,r) ) \leq \lambda_i ( m
		\boldsymbol \upalpha (m) + \kappa ) (1-2^{1-m/d})^{-2} (i+1)
		r_i^m.
	\end{equation*}
	Hence, together with the first estimate and $r_i \leq 2^{1/d}
	(1-2^{-1/d}) (i+1)^{-1} s_{i+1}$, the assertion follows.
	
	The assertion of the preceding paragraph implies
	\begin{equation*}
		\boldsymbol \Uptheta^m ( \mathscr H^m \restrict M, a ) = 0
		\quad \text{whenever $a \in A$}
	\end{equation*}
	and, as $\Clos M$ is compact, also $\mathscr H^m (M) < \infty$.
	Noting
	\begin{align*}
		{\textstyle \int_{Z(a,r,u,h)} \| \mathbf b ( Z(a,r,u,h), x )
		\|^p \ud \mathscr H^m \, x } & \leq m \boldsymbol \upalpha (m)
		r^{m-p} (h/r), \\
		{\textstyle \int_{N(a,r)} \| \mathbf b ( N(a,r), x ) \|^p \ud
		\mathscr H^m \, x} & \leq \kappa^{1+p} r^{m-p}
	\end{align*}
	for $a \in \mathbf R^n$, $0 < r < \infty$, $u \in \mathbf S^{n-1}$,
	and $0 \leq h \leq \infty$, we estimate
	\begin{equation*}
	{\textstyle \int_M \| \mathbf b ( M,x ) \|^p \ud \mathscr H^m
		\, x \leq 2 ( \kappa^{1+p} + 4 m \boldsymbol \upalpha ( m ) )
		\sum_{i=0}^\infty (i+1) 2^{i(1+(p-m)/d)} < \infty}
	\end{equation*}
	whenever $1 \leq p < q$.  Since $\mathscr H^{m-1} ( \partial M_i )
	\leq m \boldsymbol \upalpha (m) 2^{i ( 1 + (1-m)/d )}$ for every
	positive integer $i$, the conclusion now readily follows.
\end{proof}

\begin{remark} \label{remark:third_question}
	The preceding theorem answers the third question posed
	in \cite[Section A]{scharrer:MSc}.
\end{remark}

In the second example below related to the sharpness of
Theorem \ref{Thm:varifold-density-lower-bound}, we are again able to control
the second fundamental form $\mathbf b (M,\cdot)$ instead of $\mathbf h
(M,\cdot)$.

\begin{theorem} \label{thm:example_p_small}
	Whenever $m$ and $n$ are positive integers and $3 \leq m < n$, there
	exists a bounded, connected $m$ dimensional submanifold $M$ 
	of class $\infty$ of $\mathbf R^n$ such that
	\begin{gather*}
	{\textstyle \int_M \| \mathbf b (M,x) \|^p \ud \mathscr H^m \,
		x } < \infty \quad \text{whenever $1 \leq p < m-1$}, \\
	{\textstyle \int_M \Tan (M,x)_\natural \bullet \Der \theta (x)
		\ud \mathscr H^m \, x = - \int_M \mathbf h (M,x) \bullet
		\theta (x) \ud \mathscr H^m \, x}
	\end{gather*}
	for $\theta \in \mathscr D ( \mathbf R^n, \mathbf R^n )$, and such
	that
	\begin{equation*}
	\mathscr H^m ( ( \Clos M ) \without M ) = 1, \qquad
	\boldsymbol \Uptheta^m ( \mathscr H^m \restrict M, a ) = 0 \quad
	\text{for $a \in (\Clos M) \without M$}.
	\end{equation*}
\end{theorem}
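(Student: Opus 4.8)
I would follow the pattern of~\ref{thm:optimality_exceptional_set}, replacing its Cantor set of dimension $m-q$ by the cube $A = [0,1]^m \times \{0\}$, for which $\mathscr H^m(A)=1$, and its binary branching by $2^m$-fold branching. Reduce to $n=m+1$ by composing with a linear isometric embedding $\mathbf R^{m+1}\to\mathbf R^n$, write points of $\mathbf R^{m+1}$ as $(x,t)$ with $x\in\mathbf R^m$, $t\in\mathbf R$, and fix rapidly decreasing radii $\delta_i = 2^{-i-i^2}$. To each of the $2^{mi}$ dyadic subcubes $Q$ of $[0,1]^m$ of side $2^{-i}$ attach, at the point $(c_Q,2^{-i})$ with $c_Q$ the centre of $Q$, a \emph{junction}: a scaled copy, of diameter $\lesssim\delta_i$, of a fixed smooth piece with one hole of radius $\delta_i$ (for the tube coming from $Q$'s dyadic parent) and $2^m$ holes of radius $\delta_{i+1}$ (for the tubes going to $Q$'s $2^m$ dyadic children), these children-junctions being joined to it by cylinders $Z(\cdot,\delta_{i+1},\cdot,\cdot)$ of radius $\delta_{i+1}$ and length $\lesssim 2^{-i}$; the level-zero junction is closed off by a spherical cap. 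Smoothing all joins as in~\ref{lemma:corners} and~\ref{remark:corners} produces a bounded connected $m$ dimensional submanifold $M$ of class~$\infty$ of $\mathbf R^{m+1}$ without boundary; since every junction and tube of level $\geq i$ lies in the slab $\{(x,t)\with 0<t\leq 2^{-i+1}\}$ while the junction centres are dense in $[0,1]^m$, one verifies $(\Clos M)\without M = A$, which has $\mathscr H^m$~measure $1$.

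The remaining estimates reduce to elementary summations based on $\delta_i\leq 2^{-i}$, $\sum_i(2^i\delta_i)^{m-1}<\infty$, and the fact that on the level-$i$ portion of $M$ — the $2^{mi}$ junctions (each of $\mathscr H^m$~measure $\lesssim\delta_i^m$) and $2^{mi}$ tubes (each of $\mathscr H^m$~measure $\lesssim\delta_i^{m-1}2^{-i}$) — one has $\|\mathbf b(M,\cdot)\|\lesssim\delta_i^{-1}$, except near the $\lesssim 2^m$ small rims of each junction where $\|\mathbf b(M,\cdot)\|\lesssim\delta_{i+1}^{-1}$ on a set of $\mathscr H^m$~measure $\lesssim\delta_{i+1}^m$. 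Thus $\mathscr H^m(M)\lesssim\sum_i 2^{mi}(\delta_i^m+\delta_i^{m-1}2^{-i})<\infty$, and $\int_M\|\mathbf b(M,\cdot)\|^p\ud\mathscr H^m\lesssim\sum_i 2^{mi}(\delta_i^{m-p}+\delta_{i+1}^{m-p}+\delta_i^{m-1-p}2^{-i})$, the first two series converging for every $p<m$ and the third for every $p<m-1$ by the quadratic decay of $\delta_i$; the third series is exactly where the threshold lies, since its $p=m-1$ version equals $\sum_i 2^{i(m-1)}=\infty$ irrespective of the $\delta_i$. For $a\in A$ and $2^{-i-1}\leq r\leq 2^{-i}$ only junctions and tubes of level $\geq i-1$ can meet $\mathbf B(a,r)$, and of those at level $j\geq i$ at most $\lesssim 2^{(j-i)m}$ each, whence $\mathscr H^m(M\cap\mathbf B(a,r))\lesssim\sum_{j\geq i-1}2^{(j-i)m}\delta_j^{m-1}2^{-j}\lesssim r^m\sum_{j\geq i-1}(2^j\delta_j)^{m-1}$; letting $r\to 0$ yields $\boldsymbol\Theta^m(\mathscr H^m\restrict M,a)=0$.

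For the first variation identity, exhaust $M$ by the compact submanifolds-with-boundary $K_k$ formed by the cap together with all junctions and tubes of level $\leq k$, so that $\Bdry K_k$ is a union of $\lesssim 2^{m(k+1)}$ cylindrical cross-sections of radius $\sim\delta_{k+1}$ and hence $\mathscr H^{m-1}(\Bdry K_k)\lesssim 2^{m(k+1)}\delta_{k+1}^{m-1}\to 0$. As $m\geq 3$, the case $p=1$ of the previous paragraph gives $\mathbf h(M,\cdot)\in\mathbf L_1(\mathscr H^m\restrict M,\mathbf R^n)$; hence in the classical first variation formula on $K_k$, namely $\int_{K_k}\Tan(M,x)_\natural\bullet\Der\theta(x)\ud\mathscr H^m\,x = -\int_{K_k}\mathbf h(M,x)\bullet\theta(x)\ud\mathscr H^m\,x + \int_{\Bdry K_k}\theta\bullet\nu_k\ud\mathscr H^{m-1}$ with $\nu_k$ the outward conormal, one may pass to the limit $k\to\infty$ for each $\theta\in\mathscr D(\mathbf R^n,\mathbf R^n)$, the boundary term being $\lesssim\sup|\theta|\,\mathscr H^{m-1}(\Bdry K_k)\to 0$; this is the asserted equation. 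The principal effort lies in the explicit smooth construction of the branched model piece and its scaled copies inside $\mathbf R^{m+1}$ with $\|\mathbf b(M,\cdot)\|$ controlled by the appropriate inverse radius — a routine elaboration of~\ref{lemma:corners}, \ref{remark:corners} and the corresponding step of~\ref{thm:optimality_exceptional_set} — together with the parameter bookkeeping; the genuinely delicate points are the exact identification $(\Clos M)\without M=A$ with its density estimate, and the vanishing of the boundary contribution via the choice of exhaustion.
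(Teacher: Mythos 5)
Your proposal is correct and follows essentially the same route as the paper: countably many small junction pieces accumulating on the unit $m$~cube in a hyperplane, connected by thin cylinders of radius $2^{-i-i^2}$ (the paper's $r_i=2^{-i(i+1)}$ is the identical choice), with the threshold $p=m-1$ arising from the cylinder contribution $\sum_i 2^{(m-1)i}\delta_i^{m-1-p}$, the density computed by counting pieces meeting a ball, and the first variation identity obtained from an exhaustion whose boundaries have vanishing $\mathscr H^{m-1}$~measure. The only (cosmetic) difference is combinatorial: you organise the pieces as a $2^m$-ary dyadic tree with diagonal connecting tubes, whereas the paper places a grid of junctions at each height $2^{-i}$ joined by axis-parallel horizontal and vertical cylinders, which makes disjointness of the tubes slightly more transparent.
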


\begin{proof}
	We assume $n = m+1$.  With each $A \subset \mathbf R^n$, we associate
	sets
	\begin{equation*}
	A(a,r) = \mathbf R^n \cap \big \{ x \with r^{-1} (x-a) \in A
	\big \} \quad \text{for $a \in \mathbf R^n$ and $0 < r <
		\infty$}.
	\end{equation*}
	Let $e_1, \ldots, e_n$ denote the standard base vectors of $\mathbf
	R^n$.  We define
	\begin{align*}
	D & = \mathbf R^n \cap \{ x \with \text{$x \bullet e_n = 0$
		and $|x| < 1$} \}, \\
	H & = \mathbf R^n \cap \{ u \with \text{for some $k \in \{ 1,
		\ldots, m \}$, $u = e_k$ or $u = -e_k$} \}.
	\end{align*}
	We define $\gamma : \mathbf R^n \to \mathbf 2^H$ by
	\begin{equation*}
	\gamma (x) = H \without \{ u \with \text{for some $k$, $x
		\bullet e_k = 1$ and $u = e_k$ or $x \bullet e_k = 0$ and $u =
		-e_k$} \}
	\end{equation*}
	for $x \in \mathbf R^n$.  Notice that \ref{lemma:corners} may be used
	to construct a subset $R$ of the cylinder
	\begin{equation*}
	\mathbf R^n \cap \big \{ {\textstyle x \with \text{$0 \leq
			x \bullet e_n < \frac 14$ and $|x-(x \bullet e_n)e_n| < \frac
			14$} } \big \}
	\end{equation*}
	such that $R \cup \big ( \mathbf R^n \cap \{ x \with x \bullet e_n = 0
	\} \without D ( 0, \frac 14) \big) \cup Z ( \frac{e_n}4, \frac 18,
	e_n, \infty )$, see \ref{miniremark:cylinders}, is a properly
	embedded, connected $m$ dimensional submanifold of $\mathbf R^n$ of
	class $\infty$.  Let $S$ denote the reflection $\{ x \with
	x-2(x\bullet e_n) e_n \in R \}$ of $R$ along $\mathbf R^n \cap \{ x
	\with x \bullet e_n = 0 \}$.  Considering the submanifold furnished
	by \ref{remark:corners} applied with $\epsilon = \frac 1{16}$ and
	$(a_k,b_k)$ replaced by $(-\frac 38, \frac 38)$ if $k<n$ and $(- \frac
	14,\frac 14)$ if $k = n$, we observe that \ref{lemma:corners} may also
	be employed to construct, for $G \subset H$, a subset $N_G$ of the
	cuboid
	\begin{equation*}
	\mathbf R^n \cap \big \{ {\textstyle x \with \text{$|x \bullet
			e_n| \leq \frac 14$ and $|x \bullet e_k| < \frac 12$ for $k =
			1, \ldots, m$} } \big \}
	\end{equation*}
	such that $N_G$ contains $D( \frac{e_n}4,\frac 14) \cup D
	(-\frac{e_n}4, \frac 14)$ and such
	that
	\begin{equation*}
	N_G \cup \bigcup {\textstyle \big \{ Z ( \frac u2, \frac 18,
		u, \infty ) \with u \in G \big \}}
	\end{equation*}
	is a properly embedded, connected $m$ dimensional submanifold
	of $\mathbf R^n$ of class $\infty$.  Clearly, there exists $0 \leq
	\kappa < \infty$ satisfying
	\begin{gather*}
	\mathscr H^m ( R ) \leq \kappa, \quad \sup \im \| \mathbf b
	(R,\cdot) \|
	\leq \kappa, \quad
	\mathscr H^m ( N_G ) \leq \kappa, \quad \sup \im \| \mathbf b
	( N_G, \cdot) \| \leq \kappa
	\end{gather*}
	whenever $G \subset H$.
	
	In this paragraph, we define various objects for each positive
	integer $i$.  Let $r_i = 2^{-i(i+1)}$ and define $C_i$ to consist of
	those $x \in \mathbf R^n$ such that
	\begin{equation*}
	x \bullet e_n = 2^{-i}, \quad 0 \leq x \bullet e_k \leq 1,
	\quad \text{and} \quad 2^{i-1} x \bullet e_k \in \mathbf Z
	\end{equation*}
	for $k = 1, \ldots, m$.  We have $\card C_i = (2^{i-1}+1)^m$.  Then,
	noting $\frac{r_i}2 < 2^{-i}$, we define $X_i(u)$, for $u \in H$, to
	be the family consisting of the sets
	\begin{equation*}
	Z \big ( {\textstyle x+\frac{r_i}2u, \frac{r_i}8,
		u, 2^{-i}-\frac{r_i}2 } \big )
	\end{equation*}
	corresponding to $x \in C_i$ with $u \in \gamma (x)$.  We have $\card
	X_i (u) = (2^{i-1}+1)^{m-1}2^{i-1}$.  With $C_0 = \varnothing$, we
	define $\Psi_i$ to be the family consisting of the sets
	\begin{equation*}
	{\textstyle N_{\gamma(x)} (x,r_i) \without D ( x-\frac{r_i}4
		e_n,\frac{r_{i+1}}4)}
	\end{equation*}
	corresponding to $x \in C_i$ with $x+2^{-i}e_n \notin C_{i-1}$ as
	well as the sets
	\begin{equation*}
	{\textstyle N_{\gamma(x)} (x,r_i) \without \big ( D (
		x+\frac{r_i}4 e_n,\frac{r_i}4) \cup D (x- \frac{r_i}4e_n,
		\frac{r_{i+1}}4 ) \big )}
	\end{equation*}
	corresponding to $x \in C_i$ with $x+2^{-i}e_n \in C_{i-1}$.  Noting
	$\frac{r_i}4 + \frac{3r_{i+1}}4 < r_i \leq 2^{-i-1}$, we also define
	$\Omega_i$ to be the family consisting of the sets
	\begin{align*}
	S \big ( {\textstyle x-\frac{r_i}4e_n,r_{i+1} } \big ) & \cup
	Z \big ({ \textstyle x- ( \frac{r_i}4+\frac{r_{i+1}}4)e_n,
		\frac{r_{i+1}}8, -e_n, 2^{-i-1} - \frac{r_i}4-\frac{3r_{i+1}}4
	} \big ) \\
	& \cup R \big ( {\textstyle x- (2^{-i-1}-\frac{r_{i+1}}4) e_n,
		r_{i+1}} \big )
	\end{align*}
	corresponding to $x \in C_i$.  Clearly, we have $\card \Psi_i = \card
	C_i = \card \Omega_i$.  Finally, we let $M_i = \bigcup_{j=1}^i
	\bigcup_{u \in H} \bigcup (  X_j (u) \cup \Psi_j \cup \Omega_j )$.
	
	Now, we define $M = \bigcup_{i=1}^\infty M_i$ and notice that $M$ is a
	bounded, connected $m$ dimensional submanifold of class $\infty$
	of $\mathbf R^n$ such that
	\begin{equation*}
	( \Clos M ) \without M = \mathbf R^n \cap \{ x \with \text{$x
		\bullet e_n = 0$ and $0 \leq x_k \leq 1$ for $k = 1, \ldots,
		m$} \}.
	\end{equation*}
	Since we have $\mathscr H^{m-1} ( \partial D (a,r)) = m \boldsymbol
	\upalpha (m) r^{m-1}$ and
	\begin{gather*}
	\mathscr H^m ( Z (a,r,u,h) ) = m \boldsymbol \upalpha ( m )
	r^{m-1} h, \quad \sup \im \| \mathbf b ( Z(a,r,u,h), \cdot )
	\| = r^{-1}, \\
	\mathscr H^m ( R(a,r) ) \leq \kappa r^m, \quad \sup \im \|
	\mathbf b (R(a,r), \cdot ) \| \leq \kappa r^{-1}, \\
	\mathscr H^m ( N_G(a,r) ) \leq \kappa r^m, \quad \sup \im \|
	\mathbf b (N_G(a,r),\cdot ) \| \leq \kappa r^{-1}
	\end{gather*}
	whenever $a \in \mathbf R^n$, $0 < r < \infty$, $u \in \mathbf
	S^{n-1}$, $0 < h < \infty$, and $G \subset H$, one may use the fact
	that $\sum_{i=1}^\infty 2^{i\lambda} r_i^\epsilon < \infty$
	whenever $\lambda \in \mathbf R$ and $\epsilon > 0$ to conclude
	\begin{gather*}
	\lim_{i \to \infty} 2^{im} \mathscr H^m ( M \without M_{i-1})
	= 0, \quad \lim_{i \to \infty} \mathscr H^{m-1} ( \partial M_i
	) = 0, \\
	{\textstyle \int_M \| \mathbf b (M,x) \|^p \ud \mathscr H^m \,
		x < \infty} \quad \text{for $1 \leq p < m-1$},
	\end{gather*}
	whence we readily deduce the asserted conclusion.
\end{proof}

\begin{remark}
	The construction bears some similarities with \cite[1.2]{MR2537022}.
\end{remark}

\section{Lower density bounds}

In this section, we provide (see
\ref{thm:few_special_points}) Theorem \ref{Thm:varifold-density-lower-bound}.
The key to this are conditional lower density ratio bounds (see
\ref{lemma:lower_density_ratio_bound}--\ref{remark:classic_swing}) which are
in turn based on the Sobolev-Poincaré inequality (see
\ref{thm:new_sobolev_inequality}).  Moreover, to treat small positive density
ratios, a compactness lemma (see \ref{lemma:density_lower_bound}) is
employed.

\begin{lemma} \label{lemma:density_lower_bound}
	Suppose $1 \leq M < \infty$.
	
	Then, there exists a positive, finite number $\Gamma$ with the
	following property.
	
	If $m$ and $n$ are positive integers, $m \leq n \leq M$, $a \in
	\mathbf R^n$, $0 < r < \infty$, $V \in \mathbf V_m ( \mathbf U
	(a,r))$, $\| \updelta V \| \, \mathbf U (a,r) \leq \Gamma^{-1}
	r^{m-1}$,
	\begin{equation*}
	\| V \| \, \mathbf B (a,s) \geq M^{-1} \boldsymbol \upalpha (m)
	s^m \quad \text{whenever $0 < s < r$},
	\end{equation*}
	and $\boldsymbol \Uptheta^m ( \| V \|,x ) \geq 1$ for $\| V \|$ almost
	all $x$, then
	\begin{equation*}
		\| V \| \, \mathbf U (a,r) \geq (1-M^{-1}) \boldsymbol
		\upalpha (m) r^m.
	\end{equation*}
\end{lemma}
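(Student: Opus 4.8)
The plan is to argue by contradiction and a compactness argument, exploiting that the hypotheses are invariant under translations and dilations, so that the sought constant may be produced non-constructively. The role of the smallness of the first variation is that it disappears in the limit, leaving a \emph{stationary} varifold for which the monotonicity formula is exact and the sharp constant $\boldsymbol\alpha(m)$ becomes available.

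\emph{Reduction.} Were there no such $\Gamma$, one could apply the asserted property with $\Gamma$ replaced by each positive integer $j$ to obtain positive integers $m_j\le n_j\le M$, points $a_j\in\mathbf R^{n_j}$, radii $0<r_j<\infty$, and $V_j\in\mathbf V_{m_j}(\mathbf U(a_j,r_j))$ with
\[
	\|\updelta V_j\|\,\mathbf U(a_j,r_j)\le j^{-1}r_j^{m_j-1},\quad \|V_j\|\,\mathbf B(a_j,s)\ge M^{-1}\boldsymbol\alpha(m_j)s^{m_j}\text{ for }0<s<r_j,
\]
with $\boldsymbol\Theta^{m_j}(\|V_j\|,x)\ge1$ for $\|V_j\|$ almost all $x$, yet $\|V_j\|\,\mathbf U(a_j,r_j)<(1-M^{-1})\boldsymbol\alpha(m_j)r_j^{m_j}$. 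Replacing $V_j$ by its image under $x\mapsto r_j^{-1}(x-a_j)$ — which multiplies $\|\updelta\,\cdot\,\|$ by $r_j^{\,1-m_j}$ and $\|\,\cdot\,\|$ by $r_j^{-m_j}$ and preserves all density ratios — one arranges $a_j=0$ and $r_j=1$; since only finitely many pairs $(m_j,n_j)$ occur, one passes to a subsequence with $m_j=m$ and $n_j=n$ fixed.

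\emph{Compactness.} Then $V_j\in\mathbf V_m(\mathbf U(0,1))$, $\|\updelta V_j\|\,\mathbf U(0,1)\le j^{-1}$, the failure of the conclusion yields the uniform bound $\|V_j\|\,\mathbf U(0,1)<(1-M^{-1})\boldsymbol\alpha(m)\le\boldsymbol\alpha(m)$, and each $V_j$ is rectifiable by Allard's rectifiability theorem. Passing to a further subsequence, $V_j$ converges, as Radon measures on $\mathbf U(0,1)\times\mathbf G(n,m)$, to some $V\in\mathbf V_m(\mathbf U(0,1))$ with $\|\updelta V\|\,\mathbf U(0,1)\le\liminf_j\|\updelta V_j\|\,\mathbf U(0,1)=0$, so $V$ is stationary. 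Weak convergence transfers the mass bounds: $\|V\|\,\mathbf B(0,s)\ge\limsup_j\|V_j\|\,\mathbf B(0,s)\ge M^{-1}\boldsymbol\alpha(m)s^m$ for $0<s<1$, so $0\in\spt\|V\|$, whereas $\|V\|\,\mathbf U(0,1)\le\liminf_j\|V_j\|\,\mathbf U(0,1)\le(1-M^{-1})\boldsymbol\alpha(m)$.

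\emph{Persistence of the density bound and conclusion.} The decisive step — and the one I expect to be the main obstacle — is that $\boldsymbol\Theta^m(\|V\|,x)\ge1$ for $\|V\|$ almost all $x$; this is the statement that the lower density bound $\boldsymbol\Theta^m(\|\,\cdot\,\|,\cdot)\ge1$ almost everywhere is preserved under varifold convergence with vanishing first variation. One route is the monotonicity formula applied to each $V_j$: at $\|V_j\|$ almost every point the density ratio is within $\exp(-c_j)$ of $\boldsymbol\alpha(m)$ at all sufficiently small scales, where $c_j$ is governed by $\|\updelta V_j\|$ at small scales, and integrating in the base point against $\|V_j\|$ and using Fubini together with the uniform upper and lower bounds on ball masses one controls $\int c_j\ud\|V_j\|$ by a multiple of $\|\updelta V_j\|\,\mathbf U(0,1)\to0$, whence the bound transfers to $V$; alternatively one may invoke the corresponding closure property of the class of varifolds with $\boldsymbol\Theta^m(\|\,\cdot\,\|,\cdot)\ge1$ almost everywhere. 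Granting $\boldsymbol\Theta^m(\|V\|,x)\ge1$ almost everywhere, that set is dense in $\spt\|V\|$, so one picks $x_k\to0$ with $\boldsymbol\Theta^m(\|V\|,x_k)\ge1$; since $V$ is stationary, $s\mapsto\|V\|\,\mathbf B(x_k,s)\big/\boldsymbol\alpha(m)s^m$ is nondecreasing with limit at least $1$ as $s\to0+$, so for $0<\tau<1$ and $k$ large, using $\mathbf B(x_k,\tau-|x_k|)\subset\mathbf B(0,\tau)$,
\[
	\|V\|\,\mathbf U(0,1)\ge\|V\|\,\mathbf B(0,\tau)\ge\|V\|\,\mathbf B(x_k,\tau-|x_k|)\ge\boldsymbol\alpha(m)(\tau-|x_k|)^m;
\]
letting $k\to\infty$ and then $\tau\to1-$ gives $\|V\|\,\mathbf U(0,1)\ge\boldsymbol\alpha(m)$, contradicting $\|V\|\,\mathbf U(0,1)\le(1-M^{-1})\boldsymbol\alpha(m)<\boldsymbol\alpha(m)$ and completing the proof.
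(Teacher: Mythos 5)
Your proposal is correct and follows essentially the same route as the paper: argue by contradiction, rescale to $a=0$, $r=1$ with $m$ and $n$ fixed along a subsequence, pass to a varifold limit which is stationary with $\| V \| \, \mathbf U (0,1) \leq (1-M^{-1}) \boldsymbol \alpha (m)$ and positive mass in every ball about $0$, invoke the closure of the class $\{ V \with \boldsymbol \Theta^m ( \| V \|, \cdot ) \geq 1 \ \text{a.e.} \}$ under varifold convergence with vanishing first variation (Allard~5.6), and conclude via the monotonicity formula. The step you flag as the main obstacle is exactly the one the paper disposes of by citing Allard, so there is no gap.
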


\begin{proof}
	If the lemma were false for some $M$, there would exist sequences
	$\Gamma_i$ with $\Gamma_i \to \infty$ as $i \to \infty$ and sequences
	$m_i$, $n_i$, $a_i$, $r_i$, and $V_i$ showing that $\Gamma = \Gamma_i$
	does not have the asserted property.
	
	We could assume for some positive integers $m$ and $n$ that $m \leq
	n \leq M$, $m=m_i$, $n=n_i$, $a_i=0$, and $r_i=1$ whenever $i$ is a
	positive integer.  Defining $V \in \mathbf V_m ( \mathbf R^n \cap
	\mathbf U (0,1) )$ to be the limit of some subsequence of $V_i$, we
	would obtain
	\begin{equation*}
		\| V \| \, \mathbf U (0,1) \leq (1-M^{-1}) \boldsymbol
		\upalpha (m), \quad 0 \in \spt \| V \|, \quad \updelta V = 0.
	\end{equation*}
	Finally, using \cite[5.6, 8.6, 5.1\,(2)]{MR0307015},
	we would then conclude that
	\begin{gather*}
		\boldsymbol \Uptheta^m ( \| V \|, x ) \geq 1 \quad \text{for
		$\| V \|$ almost all $x$}, \\
		\boldsymbol \Uptheta^m ( \| V \|, 0 ) \geq 1, \quad \| V \| \,
		\mathbf U (0,1) \geq \boldsymbol \upalpha (m),
	\end{gather*}
	a contradiction.
\end{proof}

\begin{remark}
	The pattern of the preceding proof is that of \cite[7.3]{MR3528825}.
\end{remark}

The conditional lower density bounds follow rather immediately from the
Sobolev-Poincaré inequality (see \ref{thm:new_sobolev_inequality}) and its
corollary (see \ref{corollary:sobolev_inequality}), respectively.

\begin{lemma} \label{lemma:lower_density_ratio_bound}
	Suppose $m$ and $n$ are positive inters, $2 \leq m \leq n$, $U$ is an
	open subset of $\mathbf R^n$, $V_1 \in \mathbf V_m ( U )$ and $V_2 \in
	\mathbf V_{m-1} ( U )$ satisfy the conditions
	of \ref{miniremark:new_situation}, $V_1$ is indecomposable of
	type $\mathscr D ( U, \mathbf R )$,
	\begin{gather*}
	V_2 = 0 \quad \text{if $m=2$}, \qquad \text{$\| \updelta V_1
		\| \leq \| V_1 \| \restrict | \mathbf h ( V_1, \cdot ) | + \|
		V_2 \|$} \quad \text{if $m>2$}, \\
	\text{$\| \updelta V_2 \|$ is absolutely continuous with
		respect to $\| V_2 \|$} \quad \text{if $m>3$}, \\
	\text{$\phi_i$ are associated with $V_1$ as
		in \ref{miniremark:new_situation} for $i \in \{ 1,2 \}$},
	\end{gather*}
	$a \in \spt \| V_1 \|$, $0 < r < \infty$, $\mathbf B (a,r) \subset U$,
	and $( \spt \| V_1 \| ) \without \mathbf U (a,r) \neq \varnothing$.
	
	Then, there holds
	\begin{equation*}
		\Gamma_{\textup{\ref{thm:new_sobolev_inequality}}} (m)^{-1} r
		\leq \| V_1 \| ( \mathbf U (a,r) )^{1/m} + \phi_1 \mathbf U
		(a,r) + \| V_2 \| ( \mathbf U(a,r) )^{1/(m-1)} + \phi_2
		\mathbf U (a,r).
	\end{equation*}
\end{lemma}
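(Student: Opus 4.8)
The plan is to apply the Sobolev-Poincaré inequality~\ref{thm:new_sobolev_inequality} to the distance-type function $f\colon U\to\mathbf R$, $f(x)=\sup\{r-|x-a|,0\}$, and to convert the resulting bound on $\mathscr L^1(Y)$ into a lower bound for $r$ by means of indecomposability of type~$\{f\}$.

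First I would check that $f$ satisfies the hypotheses of~\ref{thm:new_sobolev_inequality}. The function $f$ is nonnegative and Lipschitzian with $\Lip f\le 1$, and $\spt f=\mathbf B(a,r)$ is a compact subset of~$U$; hence $f\in\mathbf T_{\Bdry U}(V_i)$ with $|V_i\weakD f(x)|\le 1$ for $\|V_i\|$~almost all~$x$, for $i\in\{1,2\}$ (for membership in~$\mathbf T(V_i)$ and the pointwise formula for the generalised weak derivative one may invoke \cite[8.7]{MR3528825}; the vanishing of the boundary values follows from the compactness of $\spt f$ in~$U$, cf.\ \cite[9.9]{MR3528825}). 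Since moreover $E=\{x\with f(x)>0\}=\mathbf U(a,r)$, Theorem~\ref{thm:new_sobolev_inequality} applies and furnishes a Borel set $Y\subset\mathbf R$ with $f(x)\in Y$ for $\|V_1\|$~almost all~$x$ and
\begin{multline*}
	\mathscr L^1(Y)\le\Gamma_{\ref{thm:new_sobolev_inequality}}(m)\big(\|V_1\|(\mathbf U(a,r))^{1/m}+\phi_1\,\mathbf U(a,r)\\
	+\|V_2\|(\mathbf U(a,r))^{1/(m-1)}+\phi_2\,\mathbf U(a,r)\big).
\end{multline*}

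Next I would pass to the push-forward measure $f_\#\|V_1\|$. Since $V_1$ is indecomposable of type~$\mathscr D(U,\mathbf R)$ and $\mathbf B(a,r)\subset U$, Remark~\ref{remark:indecomposability_ball} shows that $V_1$ is indecomposable of type~$\{f\}$, and then Remark~\ref{remark:sobolev_indecomposable} gives $\diam\spt f_\#\|V_1\|\le\mathscr L^1(Y)$. It remains to prove $\diam\spt f_\#\|V_1\|\ge r$, and this is where the last two hypotheses of the lemma enter. On the one hand, $a\in\spt\|V_1\|$ yields $\|V_1\|(\mathbf U(a,\varepsilon))>0$ for every $\varepsilon>0$, while $f$ maps $\mathbf U(a,\varepsilon)$ into $(r-\varepsilon,r]$ whenever $\varepsilon<r$; hence $f_\#\|V_1\|$ charges every neighbourhood of~$r$, so $r\in\spt f_\#\|V_1\|$. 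On the other hand, choosing $b\in(\spt\|V_1\|)\without\mathbf U(a,r)$, we get $|x-a|>r-\varepsilon$ and thus $f(x)<\varepsilon$ for $x\in\mathbf U(b,\varepsilon)$, so $f_\#\|V_1\|$ charges every neighbourhood of~$0$ and $0\in\spt f_\#\|V_1\|$. Therefore $\{0,r\}\subset\spt f_\#\|V_1\|$, whence $\diam\spt f_\#\|V_1\|\ge r$. Chaining the inequalities and dividing by $\Gamma_{\ref{thm:new_sobolev_inequality}}(m)$ produces the asserted estimate.

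The only point requiring any care is the last one: one must use \emph{both} remaining hypotheses — $a\in\spt\|V_1\|$ to reach the upper endpoint~$r$ of the interval $\spt f_\#\|V_1\|$, and $(\spt\|V_1\|)\without\mathbf U(a,r)\ne\varnothing$ to reach its lower endpoint~$0$ — since dropping the latter would leave only the trivial inclusion $\spt f_\#\|V_1\|\subset[0,r]$, which bounds $\diam\spt f_\#\|V_1\|$ from above rather than below. The remaining steps are routine verifications that the distance function~$f$ is an admissible test function in~\ref{thm:new_sobolev_inequality}.
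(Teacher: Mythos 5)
Your proposal is correct and follows exactly the paper's route: apply Theorem~\ref{thm:new_sobolev_inequality} and Remark~\ref{remark:sobolev_indecomposable} to $f(x)=\sup\{r-|x-a|,0\}$, using Remark~\ref{remark:indecomposability_ball} for indecomposability of type~$\{f\}$ and the two support hypotheses to pin down $\{0,r\}\subset\spt f_\#\|V_1\|$. Your verification that $f$ is admissible and your explicit endpoint argument merely spell out what the paper compresses into citations of \cite[4.6\,(1)]{men-sch:isoperimetric} and \cite[9.2,\,4]{MR3528825} and the second clause of~\ref{remark:sobolev_indecomposable}.
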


\begin{proof}
	In view of \cite[4.6\,(1)]{MR3777387}, \cite[9.2, 9.4]{MR3528825}, and
	\cite[7.4]{arXiv:2209.05955v2}, one may apply
	\ref{thm:new_sobolev_inequality} and
	\ref{remark:sobolev_indecomposable} with $f(x)$ replaced by $\sup \{
	r-|x-a|, 0 \}$.
\end{proof}

We also include a version without boundary with more explicit constants.

\begin{lemma} \label{lemma:classic_swing}
	Suppose $U$, $V$, and $\psi$ are as in \ref{miniremark:situation}, $a
	\in \spt \| V \|$, $0 < r < \infty$, $\mathbf B (a,r) \subset U$, $(
	\spt \| V \|) \without \mathbf U (a,r) \neq \varnothing$, and $V$ is
	indecomposable of type $\mathscr D ( U, \mathbf R )$.
	
	Then,
	\begin{equation*}
		2^{-m-3} m^{-1} \boldsymbol \upgamma (m)^{-1} r \leq \| V \| (
		\mathbf U (a,r) )^{1/m} + \boldsymbol \upgamma (m)^{m-1} \psi
		\, \mathbf U (a,r) .
	\end{equation*}
\end{lemma}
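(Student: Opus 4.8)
The plan is to imitate the proof of Lemma~\ref{lemma:lower_density_ratio_bound}, but using the sharper, boundary-free Corollary~\ref{corollary:sobolev_inequality} (with its explicit constant $\Gamma = 2^{m+3} m \boldsymbol\gamma(m)$) in place of Theorem~\ref{thm:new_sobolev_inequality}. First I would set $f(x) = \sup\{r-|x-a|,0\}$ for $x \in U$. Since $\mathbf B(a,r) \subset U$, this $f$ is a nonnegative, $1$-Lipschitzian function whose support is a compact subset of~$U$; hence, by \cite[4.6\,(1)]{men-sch:isoperimetric} and \cite[9.2,\,4]{MR3528825}, one has $f \in \mathbf T_{\Bdry U}(V)$ and $|V \weakD f(x)| \le 1$ for $\|V\|$~almost all~$x$, and, by Remark~\ref{remark:indecomposability_ball}, $V$~is indecomposable of type~$\{f\}$. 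Noting $\{x \with f(x)>0\} = \mathbf U(a,r)$, Corollary~\ref{corollary:sobolev_inequality} then gives $\diam \spt f_\# \|V\| \le \Gamma\big(\|V\|(\mathbf U(a,r))^{1/m} + \boldsymbol\gamma(m)^{m-1} \psi\,\mathbf U(a,r)\big)$.

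It remains to identify the left-hand side with~$r$, for which I would invoke Remark~\ref{remark:diameter_seminorm}; the two geometric hypotheses of the lemma translate exactly into the positivity condition required there. Indeed, $0 \le f \le r$ everywhere with $f(x) = r$ only for $x = a$, so, since $a \in \spt\|V\|$ forces $\|V\|(\mathbf U(a,\rho))>0$ for every $\rho>0$, we obtain $\|V\|\{x \with f(x)>r-\rho\}>0$ and hence $\|V\|_{(\infty)}(f) = r$; moreover $(\spt\|V\|)\setminus\mathbf U(a,r) \ne \varnothing$ provides a point of $\spt\|V\|$ in the open set $U \setminus \mathbf B(a,r-y)$ for each $0<y<r$, whence $\|V\|\{x \with f(x)\le y\}>0$ for all $0<y<\infty$ (for $y \ge r$ the set in question is~$U$ and $\|V\| \ne 0$). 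Thus Remark~\ref{remark:diameter_seminorm} yields $\diam \spt f_\# \|V\| = \|V\|_{(\infty)}(f) = r$, and dividing the displayed estimate by $\Gamma = 2^{m+3} m \boldsymbol\gamma(m)$ produces the assertion.

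I do not expect a genuine obstacle here: the lemma is essentially a specialisation of Corollary~\ref{corollary:sobolev_inequality} to $f(x) = \sup\{r-|x-a|,0\}$, mirroring the way Lemma~\ref{lemma:lower_density_ratio_bound} is deduced from Theorem~\ref{thm:new_sobolev_inequality}. The only delicate point is the elementary bookkeeping of the second paragraph, which turns ``$a \in \spt\|V\|$'' and ``$(\spt\|V\|)\setminus\mathbf U(a,r) \ne \varnothing$'' into the hypothesis of Remark~\ref{remark:diameter_seminorm} and pins down $\|V\|_{(\infty)}(f) = r$.
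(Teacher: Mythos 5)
Your proof is correct and follows exactly the paper's own route: the paper likewise applies Corollary~\ref{corollary:sobolev_inequality} together with Remark~\ref{remark:diameter_seminorm} to $f(x)=\sup\{r-|x-a|,0\}$, citing \cite[4.6\,(1)]{men-sch:isoperimetric}, \cite[9.2,\,4]{MR3528825}, and Remark~\ref{remark:indecomposability_ball} for the eligibility of~$f$. Your second paragraph merely spells out the verification of the hypothesis of Remark~\ref{remark:diameter_seminorm} and the identity $\|V\|_{(\infty)}(f)=r$, which the paper leaves implicit.
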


\begin{proof}
	In view of \cite[4.6\,(1)]{MR3777387}, \cite[9.2, 9.4]{MR3528825}, and
	\cite[7.4]{arXiv:2209.05955v2}, one may apply
	\ref{corollary:sobolev_inequality} and \ref{remark:diameter_seminorm}
	with $f(x)$ replaced by $\sup \{ r-|x-a|, 0 \}$.
\end{proof}

\begin{remark}
	If either $m < n$ or $m=n=2$, considering small spheres or small
	disks, respectively, shows that neither the nonemptyness hypothesis
	nor the indecomposability hypothesis may be omitted.
\end{remark}

\begin{remark} \label{remark:classic_swing}
	If $m=1$ and $V$ otherwise is as in \ref{miniremark:situation}, then
	$r \leq \| V \| \, \mathbf U (a,r)$; in fact, the indecomposability
	hypothesis implies $\{ x \with |x-a| = s \} \cap \spt \| V \| \neq
	\varnothing$ for $0 < s < r$, whence the inequality follows, since
	$\mathscr H^1 \restrict \spt \| V \| \leq \| V \|$ by
	\cite[3.5\,(1b)]{MR0307015} and \cite[4.8\,(4)]{MR3528825}.
\end{remark}

Theorem \ref{Thm:varifold-density-lower-bound} is contained
in the first item of the next theorem.  The remaining items discuss---for
special dimensions---slightly more general boundary conditions.

\begin{theorem} \label{thm:few_special_points}
	Suppose $m$ and $n$ are positive inters, $2 \leq m \leq n$, $U$ is an
	open subset of $\mathbf R^n$, $V_1 \in \mathbf V_m ( U )$, $V_2 \in
	\mathbf V_{m-1} ( U )$,
	\begin{gather*}
	\boldsymbol \Uptheta^{\dim V_i} ( \| V_i \|, x ) \geq 1 \quad
	\text{for $\| V_i \|$ almost all $x$ and $i \in \{ 1,2 \}$},
	\\
	\text{$\| \updelta V_1 \|$ is a Radon measure}, \quad
	\text{$\| \updelta V_2 \|$ is a Radon measure},
	\end{gather*}
	$V_1$ is indecomposable of type $\mathscr D ( U, \mathbf R )$,
	and $\lambda = 2^{-4} \boldsymbol \upalpha (2)^{-1} \boldsymbol \upgamma
	(2)^{-2}$.
	
	Then, the following three statements hold.
	\begin{enumerate}
		\item \label{item:few_special_points:p} If $m-1 \leq p < m$,
		$\| \updelta V_1 \| \leq \| V_1 \| \restrict | \mathbf h (
		V_1, \cdot ) | + \| V_2 \|$, $\| \updelta V_2 \|$ is
		absolutely continuous with respect to $\| V_2 \|$, $\mathbf h
		(V_1, \cdot) \in \mathbf L_p^{\mathrm{loc}} ( \| V_1 \|,
		\mathbf R^n )$, and, in case $m>2$, additionally $\mathbf h (
		V_2, \cdot ) \in \mathbf L_{p-1}^{\mathrm{loc}} ( \| V_2 \|,
		\mathbf R^n )$, then
		\begin{equation*}
		\mathscr H^{m-p} \big ( \spt \| V_1 \| \cap \big \{ x
		\with \textup{$\boldsymbol \Uptheta^m_\ast ( \| V_1 \|,
			x ) < 1$ and $\boldsymbol \Uptheta^{m-1}_\ast ( \| V_2
			\|, x ) < 1$} \big \} \big) = 0.
		\end{equation*}
		\item \label{item:few_special_points:3} If $m =3$, $\|
		\updelta V_1 \| \leq \| V_1 \| \restrict | \mathbf h ( V_1,
		\cdot ) | + \| V_2 \|$, $\mathbf h (V_1,\cdot) \in \mathbf
		L_2^{\mathrm{loc}} ( \| V_1 \|, \mathbf R^n )$, and
		\begin{equation*}
		A = \spt \| V_1 \| \cap \big \{ x \with
		\textup{$\boldsymbol \Uptheta^3_\ast ( \| V_1 \|, x ) <
			1$ and $\boldsymbol \Uptheta_\ast^2 ( \| V_2 \|, x ) <
			\lambda$} \big \},
		\end{equation*}
		then
		\begin{equation*}
			\mathscr H^1 \restrict A \leq \sup \big \{ 2^7
			\boldsymbol \upgamma (2)^2, 2^2
			\Gamma_{\textup{\ref{thm:new_sobolev_inequality}}} (3)
			\big \} \| \updelta V_2 \| \restrict A.
		\end{equation*}
		\item \label{item:few_special_points:2} If $m = 2$, $V_2 = 0$,
		and $A = \spt \| V_1 \| \cap \big \{ x \with \boldsymbol
		\Uptheta^2_\ast ( \| V_1 \|, x ) < \lambda \big \}$, then
		\begin{equation*}
		\mathscr H^1 \restrict A \leq 2^8 \boldsymbol \upgamma
		(2)^2 \| \updelta V_1 \| \restrict A.
		\end{equation*}
	\end{enumerate}
	In particular, in all cases, $\mathscr H^m \restrict \spt \| V_1 \|
	\leq \| V_1 \|$.
\end{theorem}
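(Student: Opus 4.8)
The plan is to run, uniformly for all three items, the following scheme at a point~$a$ of the relevant exceptional set: feed the conditional lower density estimate coming from the Sobolev--Poincaré inequality into the compactness lemma~\ref{lemma:density_lower_bound}, keeping careful track of the boundary measure. First I would record three standing reductions. Since $\boldsymbol\Theta^{\dim V_i}(\|V_i\|,\cdot)\geq 1$ holds $\|V_i\|$~almost everywhere and $\|\updelta V_i\|$~is a Radon measure, Allard's rectifiability theorem \cite[5.5\,(1)]{MR0307015} shows $V_1,V_2$ are rectifiable, so $\boldsymbol\Theta^{\dim V_i}(\|V_i\|,\cdot)$ exists and is at least~$1$ at $\|V_i\|$~almost every point; writing~$A$ for the exceptional set of the item at hand, it follows that $\|V_i\|(A)=0$, hence $\mu(A)=0$ for every Radon measure $\mu$~absolutely continuous with respect to~$\|V_i\|$ (in particular for the mean curvature measures $\|V_i\|\restrict|\mathbf h(V_i,\cdot)|^q$, and for $\|\updelta V_i\|$ whenever the latter is assumed absolutely continuous). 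Secondly, for $a\in\spt\|V_1\|$ and $0<r<\frac12\diam\spt\|V_1\|$ one has $(\spt\|V_1\|)\without\mathbf U(a,r)\neq\varnothing$ (take a point of~$\spt\|V_1\|$ farthest from~$a$), so \ref{lemma:lower_density_ratio_bound} --- respectively \ref{lemma:classic_swing} when $m=2$ and \ref{remark:classic_swing} when passing to lower dimensional varifolds --- is available at every small scale. Thirdly, by the comparison principle \cite[2.10.19\,(3)]{MR41:1976} together with outer regularity of Radon measures, an estimate $\mathscr H^s\restrict A\leq c^{-1}2^s\mu\restrict A$ follows once $\boldsymbol\Theta^{*s}(\mu,a)\geq c$ for $\mathscr H^s$~almost all~$a\in A$, and $\mathscr H^s(A)=0$ follows from $\mu(A)=0$ once $\boldsymbol\Theta^{*s}(\mu,a)=0$ for $\mathscr H^s$~almost all~$a\in A$; the last implication is the elementary fact that $\{a\in A:\boldsymbol\Theta^{*s}(\mu,a)>0\}=\bigcup_k\{a\in A:\boldsymbol\Theta^{*s}(\mu,a)\geq 1/k\}$ is a countable union of $\mathscr H^s$~null sets by \cite[2.10.19\,(3)]{MR41:1976}.

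For item~\eqref{item:few_special_points:p} I would argue by contradiction: supposing $\mathscr H^{m-p}(A)>0$, pass to a compact $K\subset A$ with $0<\mathscr H^{m-p}(K)<\infty$, and show that $\mathscr H^{m-p}$~almost every $a\in K$ violates the estimate of~\ref{lemma:lower_density_ratio_bound}. At such~$a$, the dichotomy \cite[2.11]{MR2537022} applied to~$V_1$ (threshold dimension~$m-p$) and to~$V_2$ (threshold dimension $(m-1)-(p-1)=m-p$) forces $\boldsymbol\Theta^m(\|V_1\|,a)=0$ and $\boldsymbol\Theta^{m-1}(\|V_2\|,a)=0$, so $\|V_1\|(\mathbf U(a,r))=o(r^m)$ and $\|V_2\|(\mathbf U(a,r))=o(r^{m-1})$; moreover $\mu_1:=\|V_1\|\restrict|\mathbf h(V_1,\cdot)|^p$ and $\mu_2:=\|V_2\|\restrict|\mathbf h(V_2,\cdot)|^{p-1}$ satisfy $\mu_i(A)=0$, so $\mu_i(\mathbf U(a,r))=o(r^{m-p})$ for $\mathscr H^{m-p}$~almost every~$a\in A$ by the third reduction. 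Hölder's inequality then gives $\int_{\mathbf U(a,r)}|\mathbf h(V_1,\cdot)|^{m-1}\ud\|V_1\|\leq\mu_1(\mathbf U(a,r))^{(m-1)/p}\|V_1\|(\mathbf U(a,r))^{(p-m+1)/p}=o(r^{((m-1)(m-p)+m(p-m+1))/p})=o(r)$, the exponent collapsing to exactly~$1$ since the Hölder weights $(m-1)/p$ and $(p-m+1)/p$ sum to~$1$; the analogous computation yields $\int_{\mathbf U(a,r)}|\mathbf h(V_2,\cdot)|^{m-2}\ud\|V_2\|=o(r)$. Hence every summand on the right of~\ref{lemma:lower_density_ratio_bound} is $o(r)$, contradicting $\Gamma^{-1}r\leq(\text{that sum})$ for small~$r$; thus $\mathscr H^{m-p}(A)=0$. (For $m=2$ one uses that item~\eqref{item:few_special_points:p} then concerns $V_2=0$, so only the $\|V_1\|$~and mean curvature summands occur.)

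For items~\eqref{item:few_special_points:3} and~\eqref{item:few_special_points:2}, where no absolute continuity of the top boundary measure is assumed, the plan is to show that for $\mathscr H^1$~almost every $a\in A$ the relevant boundary measure~$\varrho$ (namely $\|\updelta V_2\|$, respectively $\|\updelta V_1\|$) satisfies $\boldsymbol\Theta^{*1}(\varrho,a)\geq c$ with $c$ the explicit dimensional constant producing the stated factor; the measure comparison then follows from the third reduction. Arguing by contradiction, assume $\varrho(\mathbf B(a,s))<c\boldsymbol\alpha(1)s$ for all small~$s$. Using \ref{lemma:lower_density_ratio_bound} or \ref{lemma:classic_swing} with the boundary term absorbed on the left --- and, in~\eqref{item:few_special_points:3}, the vanishing $1$-density at~$a$ of $\|V_1\|\restrict|\mathbf h(V_1,\cdot)|^2$ and of $\|\updelta V_1\|$'s pieces --- one extracts a scale of radii on which $\|V_j\|(\mathbf U(a,s))\geq M^{-1}\boldsymbol\alpha(\dim V_j)s^{\dim V_j}$ holds for all smaller~$s$; feeding this seed, together with the smallness of~$\varrho$ at the critical power $r^{\dim V_j-1}$, into~\ref{lemma:density_lower_bound} bootstraps the lower density ratio of~$\|V_j\|$ up to $1-M^{-1}$ at all small scales, whence $\boldsymbol\Theta^{\dim V_j}_\ast(\|V_j\|,a)\geq 1-M^{-1}$; since $\lambda=2^{-4}\boldsymbol\alpha(2)^{-1}\boldsymbol\gamma(2)^{-2}$ is chosen so small that $1-M^{-1}$ exceeds~$1$, respectively~$\lambda$, this contradicts $a\in A$. \emph{The main obstacle is exactly this seed extraction}: in~\eqref{item:few_special_points:2} the term $\|V_2\|(\mathbf U(a,r))^{1/2}$ enters at a subcritical power and $\|\updelta V_1\|$ is only controlled through the sum $\|V_1\|\restrict|\mathbf h(V_1,\cdot)|+\|V_2\|$, so one must run the case analysis carefully (according to which of the two density ratios stays above its threshold down to scale zero, applying the bootstrap to~$V_1$ or to~$V_2$ accordingly) and track the explicit constants through~\ref{lemma:classic_swing}, \ref{lemma:lower_density_ratio_bound}, and~\ref{lemma:density_lower_bound}.

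Finally, for the concluding assertion $\mathscr H^m\restrict\spt\|V_1\|\leq\|V_1\|$, I would write $\spt\|V_1\|\cap\{\boldsymbol\Theta^m_\ast(\|V_1\|,\cdot)<1\}$ as the union of the set~$A$ of the applicable item and the set where additionally $\boldsymbol\Theta^{m-1}_\ast(\|V_2\|,\cdot)\geq 1$, respectively $\boldsymbol\Theta^{m-1}_\ast(\|V_2\|,\cdot)\geq\lambda$ or $\boldsymbol\Theta^m_\ast(\|V_1\|,\cdot)\geq\lambda$. By the item just proved, $A$~has locally finite $\mathscr H^{m-p}$~(resp.\ $\mathscr H^1$) measure, hence is $\sigma$-finite of dimension strictly below~$m$, hence $\mathscr H^m$~null; the second set has, by the comparison principle, locally finite $\mathscr H^{m-1}$~measure --- or, in the case $m=2$ where it lies in $\{\boldsymbol\Theta^2_\ast(\|V_1\|,\cdot)<1\}$, zero $\|V_1\|$~measure by rectifiability and hence zero $\mathscr H^2$~measure --- so it too is $\mathscr H^m$~null. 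Therefore $\boldsymbol\Theta^m_\ast(\|V_1\|,a)\geq 1$ for $\mathscr H^m$~almost all $a\in\spt\|V_1\|$, and $\mathscr H^m\restrict\spt\|V_1\|\leq\|V_1\|$ by~\cite[2.10.19\,(3)]{MR41:1976}.
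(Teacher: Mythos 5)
Your overall scheme --- conditional lower density ratio bounds from \ref{lemma:lower_density_ratio_bound} and~\ref{lemma:classic_swing} combined with the compactness lemma~\ref{lemma:density_lower_bound} and the density comparison principle of \cite[2.10.19]{MR41:1976} --- is the right one, and for items \eqref{item:few_special_points:2} and~\eqref{item:few_special_points:3} it matches the paper in spirit (the paper imports the quantitative ``seed extraction'' that you flag as the main obstacle from \cite[2.10]{MR2537022} rather than re-deriving it via~\ref{lemma:density_lower_bound}; that is how it obtains the explicit constants, and your sketch leaves the uniformity of the constants in that bootstrap unresolved). The genuine gap is in item~\eqref{item:few_special_points:p}: you apply the dichotomy of \cite[2.11]{MR2537022} to~$V_1$ to conclude $\boldsymbol\Theta^m(\|V_1\|,a)=0$ for $\mathscr H^{m-p}$~almost all~$a$ in the exceptional set. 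That dichotomy is proved under the hypothesis that the first variation is absolutely continuous with respect to the weight measure (cf.\ Hypothesis~\ref{hyp:mean_curvature}); here $\|\updelta V_1\|$ is only dominated by $\|V_1\|\restrict|\mathbf h(V_1,\cdot)|+\|V_2\|$ and in general carries a singular boundary part, so \cite[2.11]{MR2537022} is not applicable to~$V_1$ --- handling exactly this failure is the point of the theorem. (It is applicable to~$V_2$, and that is the only place the paper invokes it.) Without $\|V_1\|(\mathbf U(a,r))=o(r^m)$ your contradiction collapses: the condition $\boldsymbol\Theta^m_\ast(\|V_1\|,a)<1$ only gives $\|V_1\|(\mathbf B(a,r))\leq\boldsymbol\alpha(m)r^m$ along a sequence of radii, so the terms $\|V_1\|(\mathbf U(a,r))^{1/m}$ and (via your Hölder computation) $\int_{\mathbf U(a,r)}|\mathbf h(V_1,\cdot)|^{m-1}\ud\|V_1\|$ are merely $O(r)$ with a constant far too large to contradict $\Gamma_{\ref{thm:new_sobolev_inequality}}(m)^{-1}r\leq(\text{right hand side})$.

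The repair is precisely the two-step argument you reserve for the other items. Introduce the threshold $\delta=2^{-m}\boldsymbol\alpha(m)^{-1}\Gamma_{\ref{thm:new_sobolev_inequality}}(m)^{-m}$ and split $\spt\|V_1\|$. At points where $\boldsymbol\Theta^m_\ast(\|V_1\|,\cdot)\leq\delta$, \ref{lemma:lower_density_ratio_bound} together with Hölder's inequality forces one of $\boldsymbol\Theta^{\ast(m-p)}(\|V_1\|\restrict|\mathbf h(V_1,\cdot)|^p,\cdot)>0$, $\boldsymbol\Theta^{\ast(m-1)}(\|V_2\|,\cdot)>0$, or $\boldsymbol\Theta^{\ast(m-p)}(\|V_2\|\restrict|\mathbf h(V_2,\cdot)|^{p-1},\cdot)>0$; the first and third alternatives confine the point to $\mathscr H^{m-p}$~null sets by \cite[2.10.19]{MR41:1976}, and the second, intersected with the exceptional set, is $\mathscr H^{m-p}$~null by \cite[2.11]{MR2537022} applied to~$V_2$. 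At points of the exceptional set where $\boldsymbol\Theta^m_\ast(\|V_1\|,\cdot)>\delta$ and all three of those densities vanish, \ref{lemma:density_lower_bound} with $M\geq\sup\{n,1/\delta\}$ (the curvature term controlled by Hölder, the boundary term by $\|V_2\|(\mathbf U(a,r))=o(r^{m-1})$) bootstraps the lower density to $1-M^{-1}$ for every such~$M$, a contradiction. Finally, the reduction to $V_2=0$ when $m=2$ is not an assumption of item~\eqref{item:few_special_points:p} and needs justification: it holds because $\boldsymbol\Theta^1(\|V_2\|,x)\geq1$ for $x\in\spt\|V_2\|$ by \cite[4.8\,(4)]{MR3528825}, so the exceptional set avoids $\spt\|V_2\|$ and one may pass to $U\without\spt\|V_2\|$ using~\ref{remark:local_indecomposability}.
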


\begin{proof}
	Firstly, we notice that in case of \eqref{item:few_special_points:p}
	we may assume $V_2 = 0$ if $m=2$; in fact, since $\boldsymbol
	\Uptheta^1 ( \| V_2 \|, x) \geq 1$ for $x \in \spt \| V_2 \|$ by
	\cite[4.8\,(4)]{MR3528825}, we may otherwise replace $U$ by $U
	\without \spt \| V_2 \|$ by \cite[7.5]{arXiv:2209.05955v2}.
	
	Secondly, we notice for $i \in \{ 1,2 \}$ that $V_i \in
	\mathbf{RV}_{\dim V_i} ( U )$ by \cite[5.5\,(1)]{MR0307015} and that
	\begin{equation*}
		\| V_i \| = \mathscr H^{\dim V_i} \restrict \boldsymbol
		\Uptheta^{\dim V_i} ( \| V_i \|, \cdot )
	\end{equation*}
	by \cite[3.5\,(1b)]{MR0307015}.  Taking $p = m-1$ in case of
	\eqref{item:few_special_points:3}
	or \eqref{item:few_special_points:2}, we define
	\begin{gather*}
		\text{$\psi_1 = \| V_1 \| \restrict | \mathbf h ( V_1, \cdot )
		|^p$ in case of \eqref{item:few_special_points:p} or
		\eqref{item:few_special_points:3}},  \quad \text{$\psi_1 = \|
		\updelta V_1 \|$ in case of
		\eqref{item:few_special_points:2}}, \\
		\text{$\psi_2 = \| V_2 \| \restrict | \mathbf h ( V_2, \cdot )
		|^{p-1}$ in case of \eqref{item:few_special_points:p}}, \quad
		\text{$\psi_2 = \| \updelta V_2 \|$ in case of
		\eqref{item:few_special_points:3} or
		\eqref{item:few_special_points:2}}.
	\end{gather*}
	Taking $\epsilon = \inf \big \{ 2^{-7} \boldsymbol \upgamma (2)^{-2},
	2^{-2} \Gamma_{\textup{\ref{thm:new_sobolev_inequality}}} (3)^{-1} \big
	\}$ and
	\begin{gather*}
		\text{$\lambda_1 = 1$ in case of
		\eqref{item:few_special_points:p} or
		\eqref{item:few_special_points:3}}, \quad \text{$\lambda_1 =
		\lambda$ in case of \eqref{item:few_special_points:2}}, \\
		\text{$\lambda_2 = 1$ in case of
		\eqref{item:few_special_points:p} or
		\eqref{item:few_special_points:2}}, \quad \text{$\lambda_2 =
		\lambda$ in case of \eqref{item:few_special_points:3}}, \\
		\text{$\delta = 2^{-m} \boldsymbol \upalpha
		(m)^{-1}\Gamma_{\textup{\ref{thm:new_sobolev_inequality}}}
		(m)^{-m}$ in case of \eqref{item:few_special_points:p}
		or \eqref{item:few_special_points:3}}, \quad \text{$\delta =
		0$ in case of \eqref{item:few_special_points:2}}, \\
		\text{$\epsilon_1 = 0$ in case of
		\eqref{item:few_special_points:p} or
		\eqref{item:few_special_points:3}}, \quad \text{$\epsilon_1 =
		2^{-8} \boldsymbol \upgamma (2)^{-2}$ in case of
		\eqref{item:few_special_points:2}}, \\
		\text{$\epsilon_2 = 0$ in case of
		\eqref{item:few_special_points:p} or
		\eqref{item:few_special_points:2}}, \quad \text{$\epsilon_2 =
		\epsilon$ in case of \eqref{item:few_special_points:3}},
	\end{gather*}
	we furthermore define
	\begin{gather*}
	A_1 = \spt \| V_1 \| \cap \big \{ x \with \boldsymbol
	\Uptheta^m_\ast ( \| V_1 \|, x ) < \lambda_1 \big \}, \quad A_2
	= \big \{ x \with \boldsymbol \Uptheta^{m-1}_\ast ( \| V_2 \|, x
	) < \lambda_2 \big \}, \\
	Q_1 = \big \{ x \with \boldsymbol \Uptheta^m_\ast ( \| V_1 \|, x
	) > \delta \big \}, \quad Q_2 = \big \{ x \with \boldsymbol
	\Uptheta^{\ast m-1} ( \| V_2 \|, x ) > 0 \big \}, \\
	X_i = \big \{ x \with \boldsymbol \Uptheta^{\ast m-p} (
	\psi_i, x) > \epsilon_i \big \} \quad \text{for $i \in \{
		1,2 \}$}.
	\end{gather*}
	
	Clearly, we have $X_2 = \varnothing$ in case
	of \eqref{item:few_special_points:2}.  Moreover, we observe that
	\cite[2.10.19\,(3)]{MR41:1976} may be employed (cf.~\cite[p.\,152,
	l.\,9--16]{MR0435361}) to conclude
	\begin{gather*}
	\text{$2^{-8} \boldsymbol \upgamma(2)^{-2} \, \mathscr H^1
		\restrict X_1 \leq \psi_1$ in case
		of \eqref{item:few_special_points:2}}, \quad
	\text{$\epsilon \, \mathscr H^1 \restrict X_2 \leq \psi_2$
		in case of \eqref{item:few_special_points:3}}, \\
	\text{$\mathscr H^{m-p} ( X_1 ) = 0$ in case
		of \eqref{item:few_special_points:p}
		or \eqref{item:few_special_points:3}}, \quad \text{$\mathscr
		H^{m-p} ( X_2 ) = 0$ in case
		of \eqref{item:few_special_points:p}}.
	\end{gather*}
	Clearly, we have $Q_2 = \varnothing$ in case
	of \eqref{item:few_special_points:2}.  Applying \cite[2.10]{MR2537022}
	with $\epsilon$, $\Gamma$, and $s$ replaced by $(2 \boldsymbol
	\upgamma(2))^{-1}$, $2^4 \boldsymbol \upgamma(2)$, and $1$, we obtain
	\begin{equation*}
	\text{$\mathscr H^1 ( A_1 \cap Q_1 \without X_1) = 0$ in case
		of \eqref{item:few_special_points:2}}, \quad
	\text{$\mathscr H^1 ( A_2 \cap Q_2 \without X_2) = 0$ in case
		of \eqref{item:few_special_points:3}}.
	\end{equation*}
	According to \cite[2.11]{MR2537022}, there holds
	\begin{equation*}
	\text{$\mathscr H^{m-p} ( A_2 \cap Q_2 ) = 0$ in case
		of \eqref{item:few_special_points:p}}.
	\end{equation*}
	Moreover, we obtain
	\begin{equation*}
	\text{$A_1 \cap Q_1 \subset X_1 \cup Q_2$ in case of
		\eqref{item:few_special_points:p} and
		\eqref{item:few_special_points:3}};
	\end{equation*}
	in fact, whenever $\sup \{ n, 1/\delta \} \leq M < \infty$ and $a \in
	Q_1 \without ( X_1 \cup Q_2 )$, all sufficiently small $r>0$ satisfy
	\begin{gather*}
		\mathbf B (a,r) \subset U, \qquad \| V_1 \| \, \mathbf B (a,s)
		\geq M^{-1} \boldsymbol \upalpha (m) s^m \quad \text{for $0 <
		s < r $}, \\
		( \boldsymbol \upalpha (m) r^m )^{1-1/p} \psi_1 ( \mathbf U
		(a,r))^{1/p} + \| V_2 \| \, \mathbf U (a,r) \leq
		\Gamma_{\textup{\ref{lemma:density_lower_bound}}} (M)^{-1}
		r^{m-1},
	\end{gather*}
	whence we infer $\| V_1 \| \, \mathbf U (a,r) \geq (1-M^{-1})
	\boldsymbol \upalpha (m) r^m$ by \ref{lemma:density_lower_bound} and
	Hölder's inequality.  Next, we verify
	\begin{equation*}
		\spt \| V_1 \| \subset Q_1 \cup X_1 \cup Q_2 \cup X_2;
	\end{equation*}
	in fact, this follows from
	\ref{lemma:lower_density_ratio_bound} and Hölder's inequality in case
	of \eqref{item:few_special_points:p},
	from \ref{lemma:lower_density_ratio_bound} alone in case
	of \eqref{item:few_special_points:3}, and
	from \ref{lemma:classic_swing} in case
	of \eqref{item:few_special_points:2}.  Therefore, we obtain
	\begin{gather*}
	\text{$A_1 \cap A_2 \subset (A_2 \cap Q_2) \cup X_1 \cup X_2$
		in case of \eqref{item:few_special_points:p} or
		\eqref{item:few_special_points:3}}, \\
	\text{$A_1 \subset ( A_1 \cap Q_1 ) \cup X_1$ in case
		of \eqref{item:few_special_points:2}},
	\end{gather*}
	whence the main conclusion follows.
	
	Since $\lambda_2 \, \mathscr H^{m-1} \restrict U \without A_2 \leq \|
	V_2 \|$ by \cite[2.10.19\,(3)]{MR41:1976} and
	\begin{equation*}
	\mathscr H^m \, \{ x \with 0 < \boldsymbol \Uptheta^m ( \| V_1
	\|, x ) < 1 \} = 0,
	\end{equation*}
	the main conclusion yields $\boldsymbol \Uptheta^m ( \| V_1 \|, x ) \geq
	1$ for $\mathscr H^m$ almost all $x \in \spt \| V_1 \|$ and the
	postscript follows.
\end{proof}

\begin{remark} \label{remark:few_but_not_too_few}
	The exponent of the Hausdorff measure $\mathscr H^{m-p}$ in
	\eqref{item:few_special_points:p} may not be replaced by any smaller
	number determined by $m$ and $p$ even if $V_2 = 0$ by
	\cite[6.1]{arXiv:2209.05955v2} and
	\ref{thm:optimality_exceptional_set}.  Similarly, if $m \geq 3$, the
	hypothesis $m-1 \leq p$ in \eqref{item:few_special_points:p} may not
	be replaced by $m-1-\epsilon \leq p$ for any $0 < \epsilon \leq 1$
	determined by $m$ and $p$ even if $V_2 = 0$ by
	\cite[6.1]{arXiv:2209.05955v2} and \ref{thm:example_p_small}.
\end{remark}

\begin{remark} \label{remark:last_remark}
	The case $m=1$, not treated here, was studied
	in \cite[4.8]{MR3528825}; similarly, results on the case $p = m$ and
	$V_2 = 0$ are summarised in \cite[7.6]{MR3528825}.
\end{remark}

\section{Geodesic diameter} \label{section:diameter_control}

In this section, we establish (see \ref{thm:diameter_bound} and
\ref{corollary:smooth_diameter_bound}) Theorem
\ref{Thm:varifold-diameter-estimate} and Corollary
\ref{Final-corollary:diameter-bound-immersions}.  For this purpose, we
firstly study and characterise the geodesic diameter of
closed subsets of Euclidean space (see
\ref{def:geodesic_distance}--\ref{lemma:diameter}).  Then, we deduce (see
\ref{thm:diameter_bound}--\ref{remark:comparison_Topping}) the bounds on the
geodesic diameter in the varifold setting. As corollaries,
we treat the cases of immersions (see \ref{def:ck_space}--\ref{remark:zemas}),
submanifolds (see
\ref{corollary:poincare-inequality}--\ref{example:cylinder}), and
$\lambda$-minimising currents (see
\ref{example:lambda-minimising-currents}--\ref{remark:Duzaar-Steffen-Fuchs}).

\begin{definition} [see \protect{\cite[6.6]{MR3626845}}]
	\label{def:geodesic_distance}
	Whenever $X$ is a boundedly compact metric space, the \emph{geodesic
	distance} on $X$ is the pseudometric on $X$ whose value at $(a,x) \in
	X \times X$ equals the infimum of the set of numbers
	\begin{equation*}
		\mathbf V_{\inf I}^{\sup I} C
	\end{equation*}
	corresponding to all continuous maps $C : \mathbf R \to X$ such that
	$C ( \inf I ) = a$ and $C ( \sup I ) = x$ for some compact non-empty
	subinterval $I$ of $\mathbf R$.  Moreover, the diameter with respect
	to the geodesic distance is termed \emph{geodesic diameter}.
\end{definition}

\begin{remark} [see \protect{\cite[6.3]{MR3626845}}]
	\label{remark:geodesic_distance}
	The same definition results if one considers maps $C : \{ y \with 0
	\leq y \leq b \} \to X$ with $\Lip C \leq 1$ and $b = \mathbf V_0^b C$
	corresponding to $0 \leq b < \infty$.  (In fact, if it is finite, the
	infimum is attained by some such $C$.)
\end{remark}

\begin{lemma} \label{lemma:diameter}
	Suppose $X$ is a closed subset of $\mathbf R^n$ and $d$ denotes the
	geodesic diameter of $X$.
	
	Then, there holds
	\begin{equation*}
	d = \sup \{ \diam f [X] \with \textup{$0 \leq f \in \mathscr
		D ( \mathbf R^n, \mathbf R )$ and $| \Der f(x) | \leq 1$
		for $x \in X$} \}.
	\end{equation*}
\end{lemma}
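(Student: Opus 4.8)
The plan is to prove the two inequalities of the display separately. Throughout, write $\sigma$ for the geodesic distance on $X$ and call $f$ \emph{admissible} if $0 \leq f \in \mathscr D ( \mathbf R^n, \mathbf R )$ and $| \Der f(x) | \leq 1$ for $x \in X$, so that $\diam f[X] = \sup \{ |f(a) - f(x)| : a, x \in X \}$. That the supremum is at most $d$ is the easy half: given an admissible $f$ and $a,x \in X$ with $\sigma(a,x) < \infty$, I would use~\ref{remark:geodesic_distance} to pick, for each $\varepsilon > 0$, a map $C$ of an interval $\{ y : 0 \leq y \leq b \}$ into $X$ with $\Lip C \leq 1$, $C(0) = a$, $C(b) = x$, and $b \leq \sigma(a,x) + \varepsilon$. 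Then $f \circ C$ is Lipschitzian with $(f \circ C)'(t) = \Der f(C(t)) \circ C'(t)$ for $\mathscr L^1$~almost all $t$, hence $|(f \circ C)'(t)| \leq | \Der f(C(t)) | \, |C'(t)| \leq 1$ because $C(t) \in X$; thus $|f(x) - f(a)| \leq b \leq \sigma(a,x) + \varepsilon$. Letting $\varepsilon \downarrow 0$ (the estimate being vacuous if $\sigma(a,x) = \infty$) gives $|f(x) - f(a)| \leq \sigma(a,x) \leq d$, so $\diam f[X] \leq d$.

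For the reverse inequality, fix $\varepsilon > 0$ and choose $a,x \in X$ with $\sigma(a,x) > \min \{ d, \varepsilon^{-1} \} - \varepsilon$; it suffices to produce an admissible $f$ with $\diam f[X] > \min \{ d, \varepsilon^{-1} \} - \varepsilon$. If $\sigma(a,x) = \infty$: should the finite geodesic distances in~$X$ be unbounded, replace $x$ by a point at large finite $\sigma$-distance from $a$ and proceed as below; otherwise the $\sigma$-bounded geodesic components force $X$ to split into disjoint relatively clopen sets separating $a$ from $x$, and \ref{lemma:separation}\,\eqref{item:separation:smooth_tietze} together with rescaling by~$\varepsilon^{-1}$ furnishes an admissible $f$ that is locally constant near each point of $X$ with $\diam f[X] \geq \varepsilon^{-1}$, the necessarily large gradient occurring only off $X$. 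Hence we may assume $b := \sigma(a,x) < \infty$, and by~\ref{remark:geodesic_distance}, after discarding intervals of constancy, there is an injective, arc-length parametrised, shortest curve $C : [0,b] \to X$ joining $a$ to $x$; its image $Y = C[0,b]$ is a simple arc of length $b$ in $X$ on which $\sigma$ restricts to the arc-length distance.

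The main step is to build, from $Y$, an admissible $f$ with $f(x) - f(a) \geq b - \varepsilon$. My plan is to realise ``arc-length along $Y$'' near $Y$ through a nearest-point projection onto a smoothed copy of $Y$: approximate $C$ by a $C^\infty$ embedded arc $\widetilde C : [0, \widetilde b] \to \mathbf R^n$ with $\sup_t |\widetilde C(t) - C(t)|$ and $|\widetilde b - b|$ as small as desired, set $\widetilde Y = \widetilde C[0,\widetilde b]$, let $R$ be positive and below the reach of $\widetilde Y$, let $\pi$ be the nearest-point projection of $\{ z : \dist(z,\widetilde Y) < R \}$ onto $\widetilde Y$, and $\ell : \widetilde Y \to [0,\widetilde b]$ the arc-length coordinate; for small $\rho \in (0,R)$ the composite $\ell \circ \pi$ is smooth with $| \Der (\ell \circ \pi) | \leq (1 - \rho/R)^{-1}$ on $\{ \dist(\cdot,\widetilde Y) \leq \rho \}$, while its values at $a$ and $x$ differ by more than $b - \varepsilon$. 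One then extends this to a function $g$ on $\mathbf R^n$ that remains $1$-Lipschitzian in the Euclidean sense off a neighbourhood of $X$ (the delicate step, see below), multiplies by a cut-off equal to $1$ on a bounded neighbourhood of $X$ (which changes nothing on $X$), mollifies at a scale far below $\rho$ (preserving the gradient bound at the points of $X$), and finally passes to $(g^2 + \varepsilon^2)^{1/2} - \varepsilon$ to make it nonnegative with compact support without increasing its gradient on $X$ or diminishing $\diam g[X]$ by more than $2\varepsilon$; a concluding rescaling by $(1 + \varepsilon)^{-1}$ absorbs the factor $(1 - \rho/R)^{-1}$.

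The obstacle — and the step I expect to require real care — is arranging $| \Der f(x) | \leq 1$ for \emph{all} $x \in X$: the obvious choice $f|_X = \sigma(a,\cdot) \wedge b$ is in general neither Euclidean-Lipschitzian nor even continuous on $X$ (take the ``comb'' $( \{ 0 \} \times [0,1] ) \cup \bigcup_k ( [0,1] \times \{ 1/k \} )$, suitably joined), and the projected-arc-length model above is controlled only inside the reach-tube of $\widetilde Y$, whereas $X$ may veer far from $Y$. I would handle this by covering $X$ with countably many relatively open pieces, on each of which a shortest path back to $Y$ organises a local projected-arc-length model as above, and by gluing these with a partition of unity subordinate to the cover — with the relatively-closed separations and the partition itself supplied by \ref{lemma:separation} — absorbing the resulting factor $1 + \varepsilon$ into the final rescaling. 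Verifying that this gluing need not spoil the oscillation $f(x) - f(a) \approx b$, which is possible precisely because $f$ is free to (and must) have large gradient on the part of $\mathbf R^n \without X$ separating the two ends of $Y$, is what I expect to be the heart of the argument.
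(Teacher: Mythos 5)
Your first inequality is correct and coincides with the paper's argument (compose $f$ with an arc-length parametrised path and integrate). The converse inequality, however, has a genuine gap, and it sits precisely where you place ``the heart of the argument''. To make the partition-of-unity step produce $|\Der f(x)|\leq 1$ at \emph{every} $x\in X$, the local models being glued must agree up to a small error on overlaps: writing $\Der\big(\sum_s v_s g_{\tau(s)}\big)=\sum_s v_s\Der g_{\tau(s)}+\sum_s \Der v_s\,(g_{\tau(s)}-g)$ for a common comparison function $g$, the second sum is controlled only by the discrepancy between the local models and $g$ multiplied by the (large) gradients of the $v_s$. Your proposal supplies no such common function: the projected-arc-length model lives only in a thin tube around $\widetilde Y$, and for pieces of $X$ far from $Y$ you say only that ``a shortest path back to $Y$ organises a local projected-arc-length model'', without specifying the value assigned or why two overlapping pieces assign nearby values. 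They need not: a point of $X$ may be reachable within $X$ by detours landing at very different arc-length parameters of $Y$, so neighbouring pieces can carry models differing by an amount comparable to $b$, which destroys the gradient bound at points of $X$ in the overlap (the large gradient cannot always be hidden off $X$, e.g.\ where sheets of $X$ accumulate on one another, as in your own comb example). The paper resolves exactly this by replacing $\sigma(a,\cdot)$ with the chain pseudometric $\sigma_\delta(a,\cdot)$, the infimum of $\sum_i|x_i-x_{i-1}|$ over chains in $X$ with steps at most $\delta$: this single, globally defined function on $X$ is automatically $1$-Lipschitz on Euclidean $\delta$-balls around points of $X$ and converges pointwise to $\sigma(a,\cdot)$ as $\delta\to 0+$, after which truncation, local Lipschitz extension, convolution, and the partition of unity of \cite[3.1.13]{MR41:1976} yield an admissible $f$ -- no geodesic, smoothed arc, or tubular neighbourhood is needed. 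Until you exhibit an analogous globally consistent, locally Euclidean-Lipschitz function on $X$ with oscillation at least $b-\varepsilon$, your gluing cannot be completed.

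A secondary error: in the case $\sigma(a,x)=\infty$ you assert that the classes of finite geodesic distance split $X$ into relatively clopen sets. This is false; for $X=(\{0\}\times[0,1])\cup\bigcup_k(\{1/k\}\times[0,1])$ the geodesic component $\{0\}\times[0,1]$ is relatively closed but not relatively open in $X$. The chain metric repairs this too, since $\{z\with \sigma_\delta(z,a)<\infty\}$ \emph{is} relatively clopen in $X$ for each fixed $\delta$.
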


\begin{proof}
	In view of \cite[2.9.20]{MR41:1976} and
	\ref{remark:geodesic_distance}, the supremum does not exceed $d$.
	
	To prove the converse inequality, we define pseudometrics
	$\sigma_\delta : X \times X \to \overline{\mathbf R}$ by letting
	$\sigma_\delta (a,x)$, for $(a,x) \in X \times X$ and $0 < \delta \leq
	1$, denote the infimum of the set of numbers
	\begin{equation*}
	\sum_{i=1}^j |x_i-x_{i-1}|
	\end{equation*}
	corresponding to all finite sequences $x_0, x_1, \ldots, x_j \in X$
	with $x_0 = a$, $x_j = x$, and $|x_i-x_{i-1}| \leq \delta$ for $i = 1,
	\ldots, j$.  One readily verifies that
	\begin{equation*}
	\sigma_\delta ( \chi, a ) \leq \sigma_\delta (x,a) + |x-\chi|
	\quad \text{whenever $a,x,\chi \in X$ and $|x-\chi| \leq
		\delta$};
	\end{equation*}
	in particular, $\Lip ( \sigma_\delta (\cdot,a) | \mathbf B
	(x,\delta) ) \leq 1$ in case $\sigma_\delta (a,x) < \infty$.  Denoting
	by $\varrho$ the geodesic distance on $X$, we have
	\begin{equation*}
	|a-x| \leq \sigma_\delta (a,x) \leq \varrho (a,x) \quad
	\text{and} \quad \lim_{\delta \to 0+} \sigma_\delta (a,x) =
	\varrho (a,x) \quad \text{for $a,x \in X$}
	\end{equation*}
	by \cite[6.3]{MR3626845}.  Consequently, one readily verifies%
	\begin{footnote}
		{In fact, as $\sigma_\delta$ is real valued in case $d <
			\infty$, we have
			\begin{equation*}
			d = \sup \{ \diam \im \sup \{ s - \sigma_\delta
			(\cdot,a),0 \} \with \text{$0 \leq s < \infty$, $0 <
				\delta \leq 1$, and $a \in X$} \}.
			\end{equation*}}
	\end{footnote}%
	\begin{equation*}
	d \leq \sup \{ \diam \im \sup \{ s - \sigma_\delta (\cdot,a),0
	\} \with \text{$0 \leq s < \infty$, $0 < \delta \leq 1$, and
		$a \in X$} \}.
	\end{equation*}
	
	This estimate implies that the conclusion is a consequence of the
	following assertion: if $\epsilon > 0$, $0 \leq s < \infty$,
	$0 < \delta \leq 1$, $a \in X$, and $\zeta = \sup \{ s - \sigma_\delta
	( \cdot, a ), 0 \}$, then there exists a nonnegative function $Z \in
	\mathscr D ( \mathbf R^n , \mathbf R)$ such that
	\begin{equation*}
	| Z(x)-\zeta(x) | \leq \epsilon \quad \text{and} \quad |
	\Der Z(x) | \leq 1 \quad \text{whenever $x
		\in X$}.
	\end{equation*}
	
	To prove this assertion, we first observe that $\zeta$ is a real
	valued function with $\Lip ( \zeta | \mathbf B (x,\delta) ) \leq 1$
	for $x \in X$.  Moreover, since $\sup \im \zeta < \infty$, it is
	sufficient to prove the assertion with $| \Der Z(x) | \leq 1$ replaced
	by $| \Der Z(x) | \leq 1+\epsilon$.  For this purpose, we will
	employ the partition of unity given in \cite[3.1.13]{MR41:1976}; in
	particular, let $V_1$ be the number constructed there, $\kappa = \sup
	\{ 1, V_1 \}$, and define
	\begin{gather*}
	\Phi = \{ \mathbf U ( \chi, \delta ) \with \chi \in X \cap
	\mathbf U (a,s+\delta) \} \cup \{ \mathbf R^n \without \mathbf
	B (a,s) \}, \quad U = {\textstyle \bigcup \Phi}, \\
	h(x) = {\textstyle \frac 1{20}} \sup \big \{ \inf \{ \dist (x,
	\mathbf R^n \without T), 1 \} \with T \in \Phi \big \} \quad
	\text{for $x \in U$}.
	\end{gather*}
	Employing a Lipschitzian extension (see \cite[2.10.44]{MR41:1976}) and
	convolution, we construct, for each $T \in \Phi$, a nonnegative
	function $g_T \in \mathscr E ( \mathbf R^n, \mathbf R )$ satisfying
	\begin{equation*}
	| g_T (x) - \zeta (x) | \leq (129)^{-n} (20\kappa)^{-1} \delta
	\epsilon \quad \text{for $x \in T$}, \qquad | \Der g_T (x)
	| \leq 1 \quad \text{for $x \in \mathbf R^n$},
	\end{equation*}
	where we may assume that $g_T = 0$ if $T = \mathbf R^n \without
	\mathbf B (a,s)$, since $\spt \zeta \subset \mathbf B (a,s)$.  Taking
	$S$, $S_x$, and $v_s$, for $s \in S$, as in \cite[3.1.13]{MR41:1976}
	and choosing $\tau : S \to \Phi$ such that $\spt v_s \subset \tau (s)$
	for $s \in S$, we define $G = \sum_{s \in S} v_s g_{\tau (s)}$.
	Clearly, we have $|G(x)-\zeta(x)| \leq \epsilon$ for $x \in X$ and
	$\spt G \subset \mathbf U (a,s+2\delta)$.  Noting $h(x) \geq \frac
	\delta{20}$ for $x \in X$, we furthermore estimate
	\begin{equation*}
		\left | \sum_{s \in S} \Der v_s (x) g_{\tau(s)} (x) \right |
		\leq \sum_{s \in S_x} | \Der v_s (x) | | g_{\tau(s)} (x) -
		\zeta (x) | \leq \epsilon, \quad | \Der G(x) | \leq 1 +
		\epsilon
	\end{equation*}
	for $x \in X$.  Applying \cite[3.16]{arXiv:2206.14046v2} with $U$,
	$E_0$, and $E_1$ replaced by $\mathbf R^n$, $\mathbf R^n \without U$,
	and $X$ to obtain $f$ with the properties listed there, we may
	take $Z \in \mathscr D ( \mathbf R^n, \mathbf R )$ defined by $Z(x) =
	f(x) G(x)$ for $x \in U$ and $Z(x) = 0$ for $x \in \mathbf R^n
	\without U$.
\end{proof}

Next, we turn to Theorem
\ref{Thm:varifold-diameter-estimate}, the general a priori estimate of the
geodesic diameter in the varifold setting with boundary.

\begin{theorem} \label{thm:diameter_bound}
	Suppose $m$ and $n$ are integers, $2 \leq m \leq n$, $V_1 \in \mathbf
	V_m ( \mathbf R^n )$ and $V_2 \in \mathbf V_{m-1} ( \mathbf R^n )$
	satisfy the conditions of \ref{miniremark:new_situation} with $U =
	\mathbf R^n$, $V_1$ is indecomposable of type $\mathscr D ( \mathbf
	R^n, \mathbf R )$, $( \| V_1 \| + \| V_2 \| ) ( \mathbf R^n ) <
	\infty$,
	\begin{gather*}
	V_2 = 0 \quad \text{if $m=2$}, \qquad \| \updelta V_1 \| \leq
	\| V_1 \| \restrict | \mathbf h ( V_1, \cdot ) | + \| V_2 \|
	\quad \text{if $m>2$}, \\
	\text{$\| \updelta V_2 \|$ is absolutely continuous with
		respect to $\| V_2 \|$} \quad \text{if $m > 3$},
	\end{gather*}
	$\phi_i$ are associated with $V_i$ as in \ref{miniremark:new_situation},
	for $i \in \{ 1,2 \}$, and $d$ denotes the geodesic
	diameter of $\spt \| V_1 \|$.
	
	Then, there holds, for some positive finite number $\Gamma$ determined
	by $m$,
	\begin{equation*}
	d \leq \Gamma ( \phi_1 + \phi_2 ) ( \mathbf R^n ).
	\end{equation*}
\end{theorem}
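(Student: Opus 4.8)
The plan is to combine the characterisation of the geodesic diameter in Lemma~\ref{lemma:diameter} with the Sobolev--Poincaré inequality of Theorem~\ref{thm:new_sobolev_inequality}, and then to absorb the two area terms $\|V_1\|(\mathbf R^n)^{1/m}$ and $\|V_2\|(\mathbf R^n)^{1/(m-1)}$ thus produced into the curvature integrals by means of the isoperimetric inequality. Throughout I write $X=\spt\|V_1\|$, so that $d$ is the geodesic diameter of~$X$.

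First I would dispose of the trivial case $(\phi_1+\phi_2)(\mathbf R^n)=\infty$ and henceforth assume this quantity finite. Then $\|\updelta V_1\|$ and $\|\updelta V_2\|$ are both \emph{finite} measures: if $m=2$ one has $\phi_1=\|\updelta V_1\|$ and $V_2=0$; if $m=3$ one has $\phi_2=\|\updelta V_2\|$; if $m>3$ the absolute continuity of $\|\updelta V_2\|$ with respect to $\|V_2\|$ gives $\|\updelta V_2\|=\|V_2\|\restrict|\mathbf h(V_2,\cdot)|$; and in every case $\|\updelta V_1\|\le\|V_1\|\restrict|\mathbf h(V_1,\cdot)|+\|V_2\|$, so Hölder's inequality together with $(\|V_1\|+\|V_2\|)(\mathbf R^n)<\infty$ yields the finiteness. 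I would also check that one may replace $V_2$ by its restriction to $X\times\mathbf G(n,m-1)$, thereby assuming $\spt\|V_2\|\subset X$: the first variation bound persists because $\|\updelta V_1\|$ is concentrated on~$X$, while the right hand side does not increase.

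By Lemma~\ref{lemma:diameter} it now suffices to bound $\diam f[X]$ for an arbitrary $0\le f\in\mathscr D(\mathbf R^n,\mathbf R)$ with $|\Der f(x)|\le 1$ for $x\in X$. For such an~$f$ one has $f\in\mathbf T_{\Bdry\mathbf R^n}(V_i)$ by Remark~\ref{remark:sobolev_poincare}, and $|V_i\weakD f(x)|\le|\Der f(x)|\le 1$ for $\|V_i\|$-almost all~$x$ (almost every such $x$ lies in~$X$, which for $i=2$ uses the reduction above), and $V_1$ is indecomposable of type~$\{f\}$. Hence Theorem~\ref{thm:new_sobolev_inequality} supplies a Borel set~$Y$ with $f(x)\in Y$ for $\|V_1\|$-almost all~$x$, and, using Remark~\ref{remark:sobolev_indecomposable} together with $f[X]\subset\spt f_\#\|V_1\|$ (continuity of~$f$),
\[
	\diam f[X]\le\diam\spt f_\#\|V_1\|\le\mathscr L^1(Y)\le\Gamma_0\bigl(\|V_1\|(\mathbf R^n)^{1/m}+\phi_1(\mathbf R^n)+\|V_2\|(\mathbf R^n)^{1/(m-1)}+\phi_2(\mathbf R^n)\bigr)
\]
for some $\Gamma_0=\Gamma_0(m)$.

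The remaining step, which I expect to be the main point, is to bound $\|V_1\|(\mathbf R^n)^{1/m}$ and $\|V_2\|(\mathbf R^n)^{1/(m-1)}$ in terms of $(\phi_1+\phi_2)(\mathbf R^n)$. The tool is the isoperimetric inequality for varifolds (see \cite[3.5,\,7]{men-sch:isoperimetric}), which applies with vanishing distributional boundary term since $V_i\boundary\mathbf R^n=0$. Applied to~$V_2$ it bounds $\|V_2\|(\mathbf R^n)^{1-1/(m-1)}$ by $\boldsymbol\gamma(m-1)\|\updelta V_2\|(\mathbf R^n)$: for $m=2$ this is vacuous since $V_2=0$; for $m=3$ it reads $\|V_2\|(\mathbf R^n)^{1/2}\le\boldsymbol\gamma(2)\phi_2(\mathbf R^n)$; and for $m>3$ one estimates $\|\updelta V_2\|(\mathbf R^n)=\int|\mathbf h(V_2,\cdot)|\,\ud\|V_2\|$ by Hölder's inequality as $\phi_2(\mathbf R^n)^{1/(m-2)}$ times a power of $\|V_2\|(\mathbf R^n)$ strictly below the exponent $1-1/(m-1)$ on the left, whence Young's inequality absorbs that term and yields $\|V_2\|(\mathbf R^n)^{1/(m-1)}\le\Gamma_1(m)\phi_2(\mathbf R^n)$. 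The same scheme applied to~$V_1$ --- isoperimetric inequality, then $\|\updelta V_1\|\le\|V_1\|\restrict|\mathbf h(V_1,\cdot)|+\|V_2\|$, then Hölder's and Young's inequalities, the power of $\|V_1\|(\mathbf R^n)$ arising again being sub-critical --- gives $\|V_1\|(\mathbf R^n)^{1/m}\le\Gamma_2(m)\bigl(\phi_1(\mathbf R^n)+\|V_2\|(\mathbf R^n)^{1/(m-1)}\bigr)$, where for $m=2$ one simply uses $\phi_1=\|\updelta V_1\|$. Substituting both bounds into the displayed estimate for $\diam f[X]$ and passing to the supremum over~$f$ via Lemma~\ref{lemma:diameter} yields the assertion with $\Gamma=\Gamma(m)$. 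The delicate point throughout is this Young-type absorption: it succeeds exactly because the deficit between the two competing exponents is strictly positive, which amounts to the elementary inequalities $m(m-2)<(m-1)^2$ and $(m-1)(m-3)<(m-2)^2$.
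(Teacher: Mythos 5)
Your proposal is correct and follows essentially the same route as the paper's proof: the characterisation of the geodesic diameter (\ref{lemma:diameter}), the Sobolev-Poincaré inequality (\ref{thm:new_sobolev_inequality} combined with \ref{remark:sobolev_indecomposable} and the continuity of~$f$), and absorption of the terms $\| V_1 \| ( \mathbf R^n )^{1/m}$ and $\| V_2 \| ( \mathbf R^n )^{1/(m-1)}$ into $(\phi_1+\phi_2)(\mathbf R^n)$ via the isoperimetric and Hölder inequalities. The only cosmetic difference is that where you invoke a Young-type absorption, the paper uses the equivalent dichotomy on whether $\| V_1 \| ( \mathbf R^n )^{1-1/m}$ exceeds $2 \boldsymbol \gamma (m) \| V_2 \| ( \mathbf R^n )$.
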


\begin{proof}
	The isoperimetric inequality
	and Hölder's inequality yield
	\begin{equation*}
	\| V_2 \| ( \mathbf R^n )^{1/(m-1)} \leq \boldsymbol \upgamma
	(m-1)^{m-2} \, \phi_2 ( \mathbf R^n ).
	\end{equation*}
	We will show
	\begin{equation*}
	\| V_1 \| ( \mathbf R^n )^{1/m} \leq (2 \boldsymbol \upgamma (m)
	)^{m-1} \, \phi_1 ( \mathbf R^n ) + ( 2 \boldsymbol \upgamma (m)
	)^{1/(m-1)} \boldsymbol \upgamma (m-1)^{m-2} \, \phi_2 ( \mathbf
	R^n);
	\end{equation*}
	in fact, if $m=2$, then $\| V_1 \| ( \mathbf R^n )^{1/2} \leq
	\boldsymbol \upgamma (2) \, \phi_1 ( \mathbf R^n )$ by the isoperimetric
	inequality, and, if $m > 2$, then we may assume $\| V_1 \| ( \mathbf
	R^n)^{1-1/m} > 2 \boldsymbol \upgamma (m) \, \| V_2 \| ( \mathbf R^n )$,
	in which case the isoperimetric inequality may be used to obtain
	\begin{equation*}
	{\textstyle \| V_1 \| ( \mathbf R^n )^{1-1/m} \leq 2
		\boldsymbol \upgamma (m) \int | \mathbf h (V_1,x)| \ud \| V_1 \|
		\, x},
	\end{equation*}
	whence the asserted inequality follows by Hölder's inequality.
	
	Next, suppose $X = \spt \| V_1 \|$ and $f$ satisfies the conditions
	of \ref{lemma:diameter}.  Then, $f \in \mathbf T_{\varnothing} ( V_i)$
	and $\| V_i \|_{(\infty)} ( V_i \, \mathbf D f ) \leq 1$ for $i \in \{
	1,2 \}$ by
	\cite[4.6\,(1)]{MR3777387} and \cite[9.2]{MR3528825}.
	Hence,
	\ref{thm:new_sobolev_inequality} and \ref{remark:sobolev_indecomposable}
	yield
	\begin{equation*}
	\diam \spt f_\# \| V_1 \| \leq \Delta ( \phi_1 + \phi_2 )
	( \mathbf R^n ),
	\end{equation*}
	where $\Delta = \Gamma_{\textup{\ref{thm:new_sobolev_inequality}}} (m)
	\big ( 1 + \boldsymbol \upgamma (m-1)^{m-2} ( 1 + ( 2 \boldsymbol
	\upgamma (m))^{1/(m-1)}) + ( 2 \boldsymbol \upgamma (m))^{m-1} \big
	)$.  Finally, we notice $f [X] \subset \spt f_\# \| V_1 \|$ as $f$ is
	continuous.%
	\begin{footnote}
		{In fact, as $f$ is closed, we have $f[X] = \spt f_\# \| V_1
			\|$.}
	\end{footnote}%
\end{proof}

\begin{remark} \label{remark:fourth_question}
	The preceding theorem answers the fourth question posed
	in \cite[Section A]{scharrer:MSc}.
\end{remark}

\begin{remark} \label{remark:clamped-Willmore-II}
	By \cite[10.21]{arXiv:2209.05955v2}, integral varifolds satisfying the
	hypotheses with $m = 2$ and $n = 3$ occur in the minimisation of the
	Willmore energy with \emph{clamped boundary condition} amongst
	connected surfaces; see \cite[Theorem 4.1]{MR4141858}.
\end{remark}

In the case without boundary, somewhat more explicit constants may be obtained
by using \ref{corollary:sobolev_inequality} instead
of \ref{thm:new_sobolev_inequality}.

\begin{corollary}
	Suppose $V$ and $\psi$ are as in \ref{miniremark:situation} with $U =
	\mathbf R^n$, $\| V \| ( \mathbf R^n ) < \infty$, $V$ is
	indecomposable of type $\mathscr D ( \mathbf R^n, \mathbf R )$, $d$
	denotes the geodesic diameter of $\spt \| V \|$, and
	$\Gamma = 2^{m+4} m \boldsymbol \upgamma ( m )^m$.
	
	Then, there holds
	\begin{equation*}
		d \leq \Gamma \, \psi ( \mathbf R^n ).
	\end{equation*}
\end{corollary}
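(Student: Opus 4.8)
The plan is to mirror the proof of~\ref{thm:diameter_bound}, using the boundary-free Sobolev-Poincaré estimate~\ref{corollary:sobolev_inequality} in place of~\ref{thm:new_sobolev_inequality}; the absence of an auxiliary $(m-1)$~dimensional varifold removes one split of the first variation and thereby accounts for the sharper dimensional constant. We may assume $\psi ( \mathbf R^n ) < \infty$, the estimate being vacuous otherwise.

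The first step is to establish the global isoperimetric bound
\begin{equation*}
	\| V \| ( \mathbf R^n )^{1/m} \leq \boldsymbol \gamma (m)^{m-1} \psi ( \mathbf R^n ).
\end{equation*}
The isoperimetric inequality (see~\cite[3.5,\,7]{men-sch:isoperimetric}) yields $\boldsymbol \gamma (m)^{-1} \| V \| ( \mathbf R^n )^{1-1/m} \leq \| \updelta V \| ( \mathbf R^n )$. If $m = 2$, this already is the asserted bound since then $\psi = \| \updelta V \|$. If $m > 2$, then $\| \updelta V \| = \| V \| \restrict | \mathbf h ( V, \cdot ) |$ by the absolute continuity of $\| \updelta V \|$ and the definition of the mean curvature, so Hölder's inequality gives $\| \updelta V \| ( \mathbf R^n ) \leq \psi ( \mathbf R^n )^{1/(m-1)} \| V \| ( \mathbf R^n )^{(m-2)/(m-1)}$; substituting, cancelling the common power of the finite number~$\| V \| ( \mathbf R^n )$, and raising to the power~$m-1$ produces the displayed inequality.

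For the second step I would let $X = \spt \| V \|$; this is a closed, hence boundedly compact, subset of~$\mathbf R^n$, so that the geodesic distance~\ref{def:geodesic_distance} on~$X$ and its characterisation~\ref{lemma:diameter} are available regardless of whether $\spt \| V \|$ turns out to be compact. Fixing $0 \leq f \in \mathscr D ( \mathbf R^n, \mathbf R )$ with $| \Der f(x) | \leq 1$ for $x \in X$, one has $f \in \mathbf T_{\varnothing}(V)$ and $| V \weakD f(x) | \leq | \Der f(x) | \leq 1$ for $\| V \|$~almost all~$x$ by~\cite[4.6\,(1)]{men-sch:isoperimetric} and~\cite[9.2]{MR3528825}, and, since $V$ is indecomposable of type~$\mathscr D ( \mathbf R^n, \mathbf R )$ and $f \in \mathscr D ( \mathbf R^n, \mathbf R )$, also indecomposable of type~$\{ f \}$. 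Therefore~\ref{corollary:sobolev_inequality} together with the trivial bounds $\| V \| ( \{ x \with f(x)>0 \} ) \leq \| V \| ( \mathbf R^n )$ and $\psi \, \{ x \with f(x)>0 \} \leq \psi ( \mathbf R^n )$ yields
\begin{equation*}
	\diam \spt f_\# \| V \| \leq 2^{m+3} m \boldsymbol \gamma (m) \big ( \| V \| ( \mathbf R^n )^{1/m} + \boldsymbol \gamma (m)^{m-1} \psi ( \mathbf R^n ) \big ),
\end{equation*}
which by the first step is at most $2^{m+3} m \boldsymbol \gamma (m) \cdot 2 \boldsymbol \gamma (m)^{m-1} \psi ( \mathbf R^n ) = 2^{m+4} m \boldsymbol \gamma (m)^m \psi ( \mathbf R^n ) = \Gamma \, \psi ( \mathbf R^n )$. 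Since $f$~is continuous, $f[X] \subset \spt f_\# \| V \|$, so $\diam f[X] \leq \Gamma \, \psi ( \mathbf R^n )$; taking the supremum over all admissible~$f$ and invoking~\ref{lemma:diameter} gives $d \leq \Gamma \, \psi ( \mathbf R^n )$.

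I do not expect a genuine obstacle, as each ingredient is an already-established result; the only care required is the constant bookkeeping in the last estimate, the observation on bounded compactness just mentioned, and the passage from indecomposability of type~$\mathscr D ( \mathbf R^n, \mathbf R )$ to that of type~$\{ f \}$.
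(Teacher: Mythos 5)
Your proposal is correct and follows essentially the same route as the paper: apply the characterisation of the geodesic diameter (\ref{lemma:diameter}) together with \ref{corollary:sobolev_inequality}, and absorb the volume term via the isoperimetric inequality and Hölder's inequality to obtain $\| V \| ( \mathbf R^n )^{1/m} \leq \boldsymbol \gamma (m)^{m-1} \psi ( \mathbf R^n )$, yielding the stated constant $2^{m+4} m \boldsymbol \gamma (m)^m$. The paper states this more tersely (as the case $V_1 = V$, $V_2 = 0$ of \ref{thm:diameter_bound} with the sharper input from \ref{corollary:sobolev_inequality}), but the content is identical.
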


\begin{proof}
	With a possibly larger number $\Gamma$, this follows from
	\ref{thm:diameter_bound} with $V_1 = V$  and $V_2 = 0$.  We verify the
	eligibility of the present number $\Gamma$ by noting that
	\begin{equation*}
	\diam \spt f_\# \| V \| \leq 2^{m+3} m \boldsymbol
	\upgamma (m) \big ( \| V \| ( \mathbf R^n )^{1/m} + \boldsymbol
	\upgamma (m)^{m-1} \, \psi ( \mathbf R^n ) \big ) \leq \Gamma \,
	\psi ( \mathbf R^n )
	\end{equation*}
	by \ref{corollary:sobolev_inequality} in conjunction with the
	isoperimetric inequality and Hölder's inequality, whenever
	$f$ satisfies the conditions of \ref{lemma:diameter} with $X = \spt \|
	V \|$.
\end{proof}

\begin{remark} \label{remark:two-convex-MCF}
	Each component (see \cite[6.12]{MR3528825}
	and \cite[5.1, 7.2]{arXiv:2209.05955v2}) of a
	varifold occurring in the \emph{level-set mean curvature flow of
	two-convex submanifolds} of dimension $m$ in $\mathbf R^{m+1}$
	satisfies the hypotheses, see \cite[Corollary 1.1]{MR4176547}.
\end{remark}

\begin{remark} \label{remark:comparison_Topping}
	Here, we compare our proof with that of P.\
	Topping's analogous result for immersions (see \cite[Theorem
	1.1]{MR2410779}).  The principal geometric
	idea---suitable smallness of mean curvature implies
	lower density ratio bounds in balls---is the same.  Our formulation
	(see \ref{lemma:density_ratio} and \ref{lemma:classic_swing}) may be
	traced back to \cite[8.3]{MR0307015}, whereas
	his formulation (see \cite[Lemma 1.2]{MR2410779})
	was inspired by his local non-collapsing result for
	Ricci flow (see \cite[Theorem 4.2]{MR2216151}).
	To implement this geometric idea for varifolds, one
	faces the difficulty that one cannot---a
	priori---assume the existence of either geodesics or lower density
	bounds; the latter are employed to obtain \cite[Lemma 1.2]{MR2410779}.
	Instead, our proof avoids these tools---though, lower
	density bounds are independently proven in
	\ref{thm:few_special_points}---and proceeds through the
	characterisation of geodesic diameter (see \ref{lemma:diameter}) and
	the Sobolev-Poincaré inequality (see \ref{thm:new_sobolev_inequality})
	in conjunction with basic properties from the study of
	indecomposability (see \cite[7.12]{arXiv:2209.05955v2}).
\end{remark}

To prepare for the use of the Whitney-type approximation results
in \ref{corollary:smooth_diameter_bound}, we firstly define the appropriate
topological function space.

\begin{definition} \label{def:ck_space}
	Suppose $k$ is a positive integer, $M$ is a compact
	manifold-with-boundary of class $k$, and $Y$ is a Banach space.
	
	Then, $\mathscr C^k (M, Y )$ is defined (see \cite[2.4]{MR3626845}) to
	be the locally convex space of all maps from $M$ into $Y$ of class $k$
	topologised by the family of all seminorms, that correspond to charts
	$\phi$ of $M$ of class $k$ and compact subsets $K$ of $\dmn \phi$, and
	have value
	\begin{equation*}
		\sup \big ( \{ 0 \} \cup \big \{ \| \Der^l ( F \circ
		\phi^{-1}) (x) \| \with x \in K \without \phi [ \partial M ],
		l = 0, \ldots, k \big \} \big )
	\end{equation*}
	at $F \in \mathscr C^k (M,Y)$.
\end{definition}

\begin{remark} \label{remark:hirsch}
	Choosing a positive integer $\iota$ and charts $\phi_i$ of $M$ of
	class $k$ with compact subsets $K_i$ of $\dmn \phi_i$, for $i = 1,
	\ldots, \iota$, satisfying $M = \bigcup_{i=1}^\iota \Int K_i$, the
	topology of the locally convex space $\mathscr C^k (M,Y)$ is induced
	by the norm $\nu$ on $\mathscr C^k ( M,Y )$ whose value at $F \in
	\mathscr C^k (M,Y)$ equals
	\begin{equation*}
		\sup \big ( \{ 0 \} \cup \big \{ \| \Der^j ( F \circ
		\phi_i^{-1}) (x) \| \with \text{$x \in K_i \without \phi_i [
		\partial M]$, $i= 0, \ldots, \iota$, $j = 0, \ldots, k$} \big
		\} \big);
	\end{equation*}
	in fact, each seminorm occurring in \ref{def:ck_space} is bounded by a
	finite multiple of $\nu$ by the general formula for the differentials
	of a composition, see \cite[3.1.11]{MR41:1976}.  Similarly, we see
	that the topology of $\mathscr C^k ( M, \mathbf R^n )$ agrees with
	that of the space named ``$C_W^k ( M, \mathbf R^n )$'' in
	\cite[p.\,35]{MR1336822}.  Consequently, if $n > 2\dim M$, the set of
	embeddings of $M$ into $\mathbf R^n$ of class $2$ is dense
	in $\mathscr C^2 ( M, \mathbf R^n )$ by \cite[2.1.0]{MR1336822}.
\end{remark}

Next, we present Corollary
\ref{Final-corollary:diameter-bound-immersions}, the a priori estimate of the
geodesic diameter of immersions of compact manifolds-with-boundary.

\begin{corollary} \label{corollary:smooth_diameter_bound}
	Suppose $m$ and $n$ are positive integers, $2 \leq m \leq n$, $M$ is
	a compact connected $m$ dimensional manifold-with-boundary of
	class $2$, the map $F : M \to \mathbf R^n$ is an immersion
	of class $2$, $g$ is the Riemannian metric on $M$ induced
	by $F$, and $\sigma$ is the Riemannian distance associated
	with $(M,g)$.
	
	Then, there holds
	\begin{equation*}
		\diam_\sigma M \leq \Gamma_{\textup{\ref{thm:diameter_bound}}}
		(m) \big ( {\textstyle\int_M} | \mathbf h ( F, \cdot ) |^{m-1}
		\ud \mathscr H^m_\sigma + {\textstyle\int_{\partial M}} |
		\mathbf h ( F | \partial M, \cdot ) |^{m-2} \ud \mathscr
		H^{m-1}_\sigma \big );
	\end{equation*}
	here $0^0 = 1$.
\end{corollary}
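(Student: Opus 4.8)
The plan is to obtain the corollary from Theorem~\ref{thm:diameter_bound} applied to the varifolds canonically associated with $F$ and $F | \partial M$, after first reducing to the case that $F$~is an embedding. \emph{Reduction to embeddings.} Since $\Gamma_{\ref{thm:diameter_bound}}$ depends only on~$m$, enlarging the ambient dimension is harmless. Fixing an embedding $\iota \colon M \to \mathbf R^N$ of class~$2$ (which exists as $M$~is compact) and, for $\varepsilon > 0$, putting $F_\varepsilon = ( F, \varepsilon \iota ) \colon M \to \mathbf R^{n+N}$, one obtains embeddings of class~$2$ whose induced metrics $g_\varepsilon$ satisfy $g \leq g_\varepsilon$ and $g_\varepsilon \to g$ uniformly on the compact space~$M$ as $\varepsilon \to 0+$; consequently, by~\ref{remark:hausdorff_measure}, $\mathscr H^m_{\sigma_\varepsilon} \to \mathscr H^m_\sigma$ and $\mathscr H^{m-1}_{\sigma_\varepsilon} \to \mathscr H^{m-1}_\sigma$ and $\diam_{\sigma_\varepsilon} M \to \diam_\sigma M$, while $\mathbf h ( F_\varepsilon, \cdot )$ and $\mathbf h ( F_\varepsilon | \partial M, \cdot )$ converge uniformly to $( \mathbf h ( F, \cdot ), 0 )$ and $( \mathbf h ( F | \partial M, \cdot ), 0 )$ (by~\ref{remark:second_fundamental_form}, since the relevant tangent planes converge and the second order data $D^2 ( F_\varepsilon \circ \phi^{-1} )$ tend to $( D^2 ( F \circ \phi^{-1} ), 0 )$). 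Hence both sides of the asserted inequality for~$F_\varepsilon$ tend to the corresponding quantities for~$F$, and it suffices to prove the inequality for embeddings. (Alternatively, one may invoke the density of embeddings in $\mathscr C^2 ( M, \mathbf R^n )$ for $n > 2m$, see~\ref{remark:hirsch}.)

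\emph{The embedded case.} Suppose $F$~is an embedding. Let $V$ be the varifold associated with~$( F, \mathbf R^n )$, and let $W$ be the varifold associated with~$( F | \partial M, \mathbf R^n )$ if $m > 2$ and $W = 0$ if $m = 2$. As $F$ and $F | \partial M$ are embeddings and $\partial ( \partial M ) = \varnothing$, \ref{remark:immersion_varifold} shows that $\| \updelta V \| = \| V \| \restrict | \mathbf h ( V, \cdot ) | + \| W \|$ and $\| \updelta W \| = \| W \| \restrict | \mathbf h ( W, \cdot ) |$ are Radon measures (the latter absolutely continuous with respect to~$\| W \|$), that $\boldsymbol \Theta^m ( \| V \|, x ) = 1 = \boldsymbol \Theta^{m-1} ( \| W \|, x )$ for almost all~$x$, and that $\mathbf h ( V, \cdot )$, $\mathbf h ( W, \cdot )$ are bounded --- being, by~\ref{remark:immersion_varifold}, the pullbacks along~$F$ and~$F | \partial M$ of the continuous mean curvature vectors on the compact manifolds-with-boundary $M$ and~$\partial M$ --- hence lie in all requisite local Lebesgue spaces. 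Moreover $( \| V \| + \| W \| ) ( \mathbf R^n ) = \mathscr H^m_\sigma ( M ) + \mathscr H^{m-1}_\sigma ( \partial M ) < \infty$ by compactness (using~\ref{lemma:immersions} and~\ref{remark:hausdorff_measure}), and $V$~is indecomposable of type~$\mathscr D ( \mathbf R^n, \mathbf R )$ by~\ref{example:indecomposability} since $M$~is connected. Thus the hypotheses of~\ref{thm:diameter_bound} hold with $V_1 = V$ and $V_2 = W$, and that theorem yields $d \leq \Gamma_{\ref{thm:diameter_bound}} ( m ) ( \phi_1 + \phi_2 ) ( \mathbf R^n )$, where $d$~is the geodesic diameter of~$\spt \| V \|$ and $\phi_1$, $\phi_2$ are as in~\ref{miniremark:new_situation}.

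\emph{Matching the two sides.} First, $\spt \| V \| = \im F$ and $F$~is an isometry of $( M, \sigma )$ onto $\im F$ equipped with its geodesic distance: $F$~carries locally Lipschitzian curves in~$M$ to locally Lipschitzian curves in $\im F$ of equal length (as $g$~is induced by~$F$), and conversely every continuous finite variation curve in $\im F$ can, after reparametrisation (see~\ref{remark:geodesic_distance}), be pulled back via the locally Lipschitzian map~$F^{-1}$ (see~\ref{lemma:immersions}) to a curve in~$M$ of the same length; therefore $d = \diam_\sigma M$. Second, using $\| V \| = F_\# \mathscr H^m_\sigma$ and $\| W \| = ( F | \partial M )_\# \mathscr H^{m-1}_\varrho$ with $\varrho$ the Riemannian distance of~$\partial M$ and $\mathscr H^{m-1}_\varrho = \mathscr H^{m-1}_\sigma \restrict \partial M$ (see~\ref{lemma:immersions}, \ref{remark:hausdorff_measure}), the identities $\mathbf h ( V, F ( a ) ) = \mathbf h ( F, a )$ and $\mathbf h ( W, F ( a ) ) = \mathbf h ( F | \partial M, a )$ from~\ref{remark:immersion_varifold}, and the change of variables formula for push-forwards, one finds $\phi_1 ( \mathbf R^n ) = \int_M | \mathbf h ( F, \cdot ) |^{m-1} \ud \mathscr H^m_\sigma$ and $\phi_2 ( \mathbf R^n ) = \int_{\partial M} | \mathbf h ( F | \partial M, \cdot ) |^{m-2} \ud \mathscr H^{m-1}_\sigma$ in all cases; when $m = 2$, note $\phi_1 = \| \updelta V \|$ already incorporates $\mathscr H^1_\sigma ( \partial M ) = \int_{\partial M} | \mathbf h ( F | \partial M, \cdot ) |^0 \ud \mathscr H^1_\sigma$ and $\phi_2 = 0$. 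This yields the asserted estimate.

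\emph{Main obstacle.} The only step that is not mere bookkeeping is the reduction to embeddings: one must verify that the Riemannian diameter, the Riemannian measures on $M$ and~$\partial M$, and the mean-curvature integrands all depend continuously on the $C^2$-small perturbation passing from~$F$ to~$F_\varepsilon$, and, in particular, that the normal component of $D^2 F_\varepsilon$ defining $\mathbf h ( F_\varepsilon, \cdot )$ converges to that of~$F$. Everything else is a verification of the hypotheses of Theorem~\ref{thm:diameter_bound} and the translation of its conclusion through~\ref{lemma:immersions} and~\ref{remark:immersion_varifold}.
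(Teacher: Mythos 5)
Your proposal is correct and follows essentially the same route as the paper: reduce to the case of an embedding, apply Theorem~\ref{thm:diameter_bound} to the varifolds associated with $F$ and $F|\partial M$ via \ref{remark:immersion_varifold} and \ref{example:indecomposability}, and translate the conclusion back through \ref{lemma:immersions} and \ref{remark:hausdorff_measure}. The only divergence is in the reduction step, where you perturb to an embedding $F_\varepsilon=(F,\varepsilon\iota)$ in $\mathbf R^{n+N}$ rather than invoking the density of embeddings in $\mathscr C^2(M,\mathbf R^n)$ for $n>2m$ as the paper does; both are valid (the constant depends only on~$m$, so enlarging the codomain is indeed harmless), and you correctly note the paper's alternative yourself.
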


\begin{proof}
	First, the \emph{special case}, that $F$ is an embedding, will be
	treated; in this case, $F$ induces an isometry between
	$\sigma$ and the geodesic distance on $F[M]$ by \cite[2.10.13,
	3.2.3\,(1)]{MR41:1976} and \ref{remark:geodesic_distance}. We define
	$V_1 \in \mathbf V_m ( \mathbf R^n )$ to be associated with
	$(F,\mathbf R^n)$ and $V_2 \in \mathbf V_{m-1} ( \mathbf R^n )$ to be
	$0$ if $m=2$ and to be associated with $(F| \partial M, \mathbf R^n
	)$ if $m>2$.  Hence, \cite[6.14, 10.3]{arXiv:2209.05955v2} yield
	\begin{gather*}
		\boldsymbol \Uptheta^{\dim V_i} ( \| V_i \|, x ) \geq 1 \quad
		\text{for $\| V_i \|$ almost all $x$ and $i \in \{ 1,2 \}$},
		\\
		\| V_1 \| = F_\# \mathscr H^m_\sigma, \quad \| \updelta V_1 \|
		= \| V_1 \| \restrict | \mathbf h ( F[M \without \partial M],
		\cdot ) | + F_\# ( \mathscr H^{m-1}_\sigma \restrict \partial
		M ), \\
		\text{$\| V_2 \| = F_\# ( \mathscr H^{m-1}_\sigma \restrict
		\partial M)$ if $m>2$}, \quad \text{$\| \updelta V_2 \| = \|
		V_2 \| \restrict | \mathbf h ( F[ \partial M], \cdot ) |$ if
		$m>2$}.
	\end{gather*}
	Since $V_1$ is indecomposable of type $\mathscr D ( U, \mathbf R )$ by
	\cite[7.9\,(1)\,(4)]{arXiv:2209.05955v2}, the special case now follows
	from \ref{thm:diameter_bound}.

	In the general case, we assume $n > 2m$ and obtain from
	\ref{remark:hirsch} a sequence of embeddings $F_i : M \to \mathbf R^n$
	of class $2$ converging to $F$ in $\mathscr C^2 ( M, \mathbf R^n)$ as
	$i \to \infty$; in particular, $\mathbf h(F_i, x) \to \mathbf h
	(F,x)$, uniformly for $x \in M$, as $i \to \infty$ by
	\cite[6.11]{arXiv:2209.05955v2} and \ref{remark:hirsch}.  Moreover,
	denoting by $g_i$ the Riemannian metrics on $M$ induced by $F_i$ and
	by $\sigma_i$ the Riemannian distance of $(M,g_i)$, we observe that,
	given $1 < \lambda < \infty$, all sufficiently large $i$ satisfy
	\begin{equation*}
		\lambda^{-2} \langle (w,w), g (z) \rangle \leq \langle (w,w),
		g_i (z) \rangle \leq \lambda^2 \langle (w,w), g(z) \rangle
	\end{equation*}
	whenever $z \in M$ and $w$ belongs to the tangent space of $M$ at $z$,
	whence we infer $\lambda^{-1} \sigma \leq \sigma_i \leq \lambda
	\sigma$ and $\lambda^{-k} \mathscr H^k_\sigma \leq \mathscr
	H^k_{\sigma_i} \leq \lambda^k \mathscr H^k_\sigma$ for $0 \leq k <
	\infty$.  Therefore, the conclusion follows from the special
	case applied with $F$ replaced by $F_i$.
\end{proof}

\begin{remark}
	In the case $m = 2$ with $\diam_\sigma f[M]$ replaced by $\diam f[M]$
	a better constant is obtained in \cite[Theorem
	1.1]{MR4553536}.
\end{remark}

\begin{remark} \label{remark:zemas}
	The integral $\int_{\partial M} | \mathbf h ( F|\partial M, \cdot)
	|^{m-2} \ud \mathscr H_\sigma^{m-1}$ equals $\mathscr H_\sigma^1 (
	\partial M )$ if $m = 2$ but it may not be replaced by $\mathscr
	H_\sigma^{m-1} ( \partial M )^{1/(m-1)}$ if $m>2$; 
	it suffices to take $m=n$.
\end{remark}

Turning to the case of submanifolds, we state the immediate
corollary and then discuss the impossiblity of a seemingly natural sharpening
of the result.

\begin{corollary} \label{corollary:poincare-inequality}
	Suppose $m$ and $n$ are integers, $2 \leq m \leq n$, $M$ is a compact
	connected $m$ dimensional submanifold-with-boundary of class $2$ of
	$\mathbf R^n$, $f : M \to \mathbf R$ is of class $1$ relative to $M$,
	and $\kappa = \sup \{ | \Der f(x) | \with x \in M \without \partial M
	\}$.

	Then, there holds (here, $0^0=0$)
	\begin{equation*}
		\diam f[M] \leq \Gamma_{\textup{\ref{thm:diameter_bound}}} (m)
		\big ( {\textstyle\int_M} | \mathbf h ( M,\cdot ) |^{m-1} \ud
		\mathscr H^m + {\textstyle\int_{\partial M}} | \mathbf h (
		\partial M, \cdot ) |^{m-2} \ud \mathscr H^{m-1} \big )
		\kappa.
	\end{equation*}
\end{corollary}

\begin{proof}
	We combine \cite[2.9.20]{MR41:1976}, \cite[10.3]{arXiv:2209.05955v2},
	and \ref{corollary:smooth_diameter_bound}.
\end{proof}

\begin{remark} \label{remark:poincare-inequality}
	In contrast to other Poincaré inequalities, the derivative $\Der f$
	may not be measured with respect to $( \mathscr H^m \restrict M
	)_{(q)}$ for any $m < q < \infty$, see \ref{example:cylinder}.
\end{remark}

\begin{example} \label{example:cylinder}
	Whenever $m$ is an integer and $2 \leq m < q < \infty$, the infimum of
	the set of numbers
	\begin{equation*}
		\gamma (M)^{1-m/q} \cdot \big ( {\textstyle\int_M} | \Der f
		|^q \ud \mathscr H^m \big)^{1/q},
	\end{equation*}
	where $\gamma(M) = {\textstyle\int_M} | \mathbf h ( M,z ) |^{m-1} \ud
	\mathscr H^m \, z + {\textstyle\int_{\partial M}} | \mathbf h
	(\partial M,z)|^{m-2} \ud \mathscr H^{m-1} \, z$, corresponding to all
	compact connected $m$ dimensional submanifolds-with-boundary of class
	$\infty$ of $\mathbf R^{m+1}$ and functions $f : M \to \mathbf R$ of 
	class $1$ relative to $M$ with $\diam f[M]=1$, equals $0$; in fact,
	the same holds if we require $\partial M = \varnothing$.
\end{example}

\begin{proof}
	Whenever $1 \leq r < \infty$, we consider
	\begin{equation*}
		M = \mathbf S^{m-1} \times \{ y \with 0 \leq y \leq r \}
		\subset \mathbf R^m \times \mathbf R \simeq \mathbf R^{m+1},
	\end{equation*}
	and $f : M \to \mathbf R$ such that $f(x,y) = y/r$ for $(x,y) \in M$;
	hence, $\diam f[M] = 1$.  Noting $| \Der f(x,y)|=1/r$ for $(x,y) \in
	M \without \partial M$, we use \cite[3.2.23]{MR41:1976} to compute
	\begin{align*}
		{\textstyle\int_M} | \mathbf h (M, \cdot )|^{m-1} \ud \mathscr
		H^m & = \mathscr H^{m-1} ( \mathbf S^{m-1} ) (m-1)^{m-1} r, \\
		{\textstyle\int_{\partial M}} | \mathbf h (\partial M, \cdot
		)|^{m-2} \ud \mathscr H^{m-1} & = 2 \mathscr H^{m-1} ( \mathbf
		S^{m-1} ) (m-1)^{m-2}, \\
		{\textstyle\int_M} | \Der f |^q \ud \mathscr H^m & = \mathscr
		H^{m-1} ( \mathbf S^{m-1} ) r^{1-q}.
	\end{align*}
	The principal assertion follows and the postscript may be obtained by
	adding half-spheres with boundary $\partial M$ to the cylinder $M$
	and approximation.
\end{proof}

Finally, we shall introduce the setting of
$\lambda$-minimising currents, apply our general diameter estimate to it, and
discuss its significance.

\begin{example} \label{example:lambda-minimising-currents}
	Suppose $m$ and $n$ are integers, $2 \leq m \leq n$, $0 \leq \lambda <
	\infty$, and, following \cite{MR1243155}, the integral current $Q \in
	\mathbf I_m ( \mathbf R^n )$ satisfies
	\begin{equation*}
		\| Q \| ( \mathbf R^n ) \leq \| Q + \boundary R \| ( \mathbf
		R^n ) + \lambda \| R \| ( \mathbf R^n ) \quad \text{whenever
		$R \in \mathbf I_{m+1} ( \mathbf R^n )$}.
	\end{equation*}
	Noting \cite[4.1.28]{MR41:1976} and \cite[3.5\,(1c)]{MR0307015}, we
	take $V \in \mathbf{IV}_m ( \mathbf R^n )$ characterised by $\| V \| =
	\| Q \|$.  Then,
	\begin{equation*}
		\| \updelta V \| \leq \lambda \| V \| + \| \boundary Q \|
	\end{equation*}
	by \cite[(2.3)]{MR1243155}, hence, noting $\| \boundary Q \|_{\| Q \|}
	= 0$ by \cite[4.1.28]{MR41:1976}, we obtain
	\begin{equation*}
		\| \updelta V \|_{\| V \|} \leq \lambda \| V \|, \quad \|
		\updelta V \| - \| \updelta V \|_{\| V \|} \leq \| \boundary Q
		\|;
	\end{equation*}
	in fact, \cite[2.9.2]{MR41:1976} applied with $\psi = \| \boundary Q
	\|$ and $\phi = \| V \|$ yields a Borel set $B$ such that
	\begin{equation*}
		\| \boundary Q \| ( B ) = 0 \quad \text{and} \quad \| V \| (
		\mathbf R^n \without B ) = 0,
	\end{equation*}
	whence we infer $\| \updelta V \|_{\| V \|} \leq \| \updelta V \|
	\restrict B \leq \lambda \| V \|$ and thus $\| \updelta V \| \restrict
	B \leq \| \updelta V \|_{\| V \|}$.  Recalling $\| \updelta V \|_{\| V
	\|} = \| V \| \restrict | \mathbf h (V,\cdot) |$ from
	\cite[3.21]{arXiv:2209.05955v2}, we deduce
	\begin{equation*}
		\| V \|_{(\infty)} ( \mathbf h (V,\cdot) ) \leq \lambda \quad
		\text{and} \quad \| \updelta V \| \leq \| V \| \restrict |
		\mathbf h (V,\cdot) | + \| \boundary Q \|.
	\end{equation*}
\end{example}

\begin{corollary} \label{corollary:diameter_bound_lambda_minimising_currents}
	Suppose $m$ and $n$ are integers, $2 \leq m \leq n$, $0 \leq \lambda <
	\infty$, $Q \in \mathbf I_m (\mathbf R^n)$ is indecomposable,
	\begin{equation*}
		\| Q \|( \mathbf R^n ) \leq \| Q + \boundary R \| ( \mathbf
		R^n ) + \lambda \| R \| ( \mathbf R^n ) \quad \text{for $R \in
		\mathbf I_{m+1} ( \mathbf R^n )$},
	\end{equation*}
	$d$ denotes the geodesic diameter of the set $\spt Q$,
	and $\gamma = \Gamma_{\textup{\ref{thm:diameter_bound}}} (m)$; if $m >
	2$, then suppose $W \in \mathbf V_{m-1} ( \mathbf R^n )$, $\| \updelta
	W \|$ is a Radon measure, $\boldsymbol \Uptheta^{m-1} (\| W \|, x)
	\geq 1$ for $\| W \|$ almost all $x$, and $\| \boundary Q \| \leq \| W
	\|$; if $m > 3$, then suppose $\| \updelta W \|$ is absolutely
	continuous with respect to $\| W \|$ and $\mathbf h (W, \cdot) \in
	\mathbf L_{m-2}^{\textup{loc}} ( \| W \|, \mathbf R^n )$.

	Then, $\mathscr H^m \restrict \spt Q \leq \| Q \|$, the set $\spt Q$
	is approximately differentiable of order $2$ at $\| Q \|$ almost all
	$a$, and the following three statements hold.
	\begin{enumerate}
		\item If $m = 2$, then $d \leq \gamma \big ( \int | \ap
		\mathbf h ( \spt Q, \cdot ) | \ud \| Q \| + \| \boundary Q \|
		( \mathbf R^n ) \big )$.
		\item If $m = 3$, then $d \leq \gamma \big ( \int | \ap
		\mathbf h ( \spt Q, \cdot ) |^2 \ud \| Q \| + \| \updelta W \|
		( \mathbf R^n ) \big )$.
		\item If $m > 3$, then $d \leq \gamma \big ( \int | \ap
		\mathbf h ( \spt Q, \cdot ) |^{m-1} \ud \| Q \| + \int |
		\mathbf h (W,\cdot) |^{m-2} \ud \| W \| \big )$.
	\end{enumerate}
\end{corollary}

\begin{proof}
	We associate $V \in \mathbf{IV}_m ( \mathbf R^n )$ with $Q$ as in
	\ref{example:lambda-minimising-currents}.  Noting
	\cite[5.1]{arXiv:2206.14046v2} and \cite[6.7]{arXiv:2209.05955v2}, the
	varifold $V$ is indecomposable of type $\mathscr D ( \mathbf R^n,
	\mathbf R )$ by \cite[10.9]{arXiv:2209.05955v2}. Therefore, applying
	\ref{thm:few_special_points} and \ref{thm:diameter_bound} with $(V_1,
	V_2 )$ replaced by $(V,0)$ if $m =2$ and $(V,W)$ if $m>2$ yields
	$\mathscr H^m \restrict \spt Q \leq \| Q \|$ and that the last three
	conclusions hold with $\ap \mathbf h ( \spt Q, \cdot )$ replaced by
	$\mathbf h (V,\cdot)$.  Finally, \cite[4.8]{MR3023856}, in conjunction
	with \cite[2.10.19\,(4)]{MR41:1976} and \cite[3.22]{MR3978264}, shows
	that the set $\spt Q$ is approximately differentiable of order $2$
	with $\ap \mathbf h ( \spt Q, a ) = \mathbf h (V,a)$ at $\| Q \|$
	almost all $a$.
\end{proof}

\begin{remark} \label{remark:diameter_bound_lambda_minimising_currents}
	For any $Q \in \mathbf I_m ( \mathbf R^n )$ satisfying the condition
	\begin{quote}
		``There exists no $R \in \mathbf I_m ( \mathbf R^n )$ with
		$\boundary R = 0$ such that $R \neq 0 \neq Q-R$ and $\| R \| +
		\| Q-R \| = \| Q \|$.''
	\end{quote}
	indecomposability of $\boundary Q$ implies that of $Q$.  For $Q$ as in
	\ref{example:lambda-minimising-currents}, this condition is guaranteed
	by
	\begin{equation*}
		\lambda^m \| Q \| ( \mathbf R^n ) \leq \boldsymbol \alpha
		(m+1) (m+1)^{m+1}
	\end{equation*}
	via the optimal isoperimetric inequality for integral currents of
	\cite[\S\,10]{MR855173}.
\end{remark}

\begin{remark} \label{remark:Duzaar-Steffen-Fuchs}
	Both \cite{MR1243155} and the preceding theorem are tailored to apply
	to the integral currents with prescribed mean curvature vector
	studied in \cite{MR1037996,MR1200740,MR1220033} and to the
	codimension-one area minimising integral currents with prescribed
	volume and boundary constructed in \cite{MR1156438}.  The condition
	discussed in \ref{remark:diameter_bound_lambda_minimising_currents} is
	particularly natural in this context as exemplified by \cite[1.2,
	1.3]{MR1156438} and \cite[2.3]{MR1220033}; the same holds for the mass
	bound guaranteeing it in view of \cite[6.1]{MR1037996}.  For $n-m=1$,
	its usage to obtain connectedness of the regular part of $\spt Q$
	appears in \cite[p.\,358]{MR1383909}.
\end{remark}

\section{Plateau problems}

In this section, we demonstrate how to apply our geodesic
diameter estimate (see \ref{thm:diameter_bound}) to solutions of Plateau
problems.  Firstly, we consider the setting of integral chains with
coefficients in a complete normed commutative group (see
\ref{lemma:first-variation-minimiser}--\ref{remark:nonexistence}); in
particular, we obtain Theorem \ref{Thm:Plateau:chains}.  Then, we study (see
\ref{lemma:density_of_variation_II}%
--\ref{corollary:Reifenberg-Plateau-diameter-sets}) natural classes of
varifolds with conditions on their first variation away from the boundary;
this results (see \ref{thm:estimate_variation_boundary} and
\ref{thm:Reifenberg-Plateau-diameter}) in Theorems \ref{FinalThm:geom-var-I}
and \ref{FinalThm:geom-var-II}.  Drawing from the literature, our theory
finally becomes applicable to the Plateau problem for sets based on Čech
homology yielding (see \ref{remark:Reifenberg-Plateau}) Theorem
\ref{FinalThm:Reifenberg-Plateau}.

\begin{lemma} [classical]
\label{lemma:first-variation-minimiser}
	Suppose $m$ and $n$ are integers, $1 \leq m \leq n$, $G$ is a complete
	normed commutative group, $S \in \mathbf I_m ( \mathbf R^n, G )$,
	\begin{equation*}
		\| S \| ( \mathbf R^n ) \leq \| T \| ( \mathbf R^n ) \quad
		\text{whenever $T \in \mathbf I_m ( \mathbf R^n, G )$ and
		$\boundary_G T = \boundary_G S$},
	\end{equation*}
	and $V \in \mathbf{RV}_m ( \mathbf R^n )$ is characterised by $\| V \|
	= \| S \|$.

	Then, there holds
	\begin{equation*}
		| ( \updelta V ) ( g ) | \leq {\textstyle\int} | \Nor^{m-1} (
		\| \boundary_G S \|, x )_\natural (g(x)) | \ud \| \boundary_G
		S \| \, x \quad \text{for $g \in \mathscr D ( \mathbf R^n,
		\mathbf R^n)$}.
	\end{equation*}
\end{lemma}

\begin{proof}
	Suppose $g \in \mathscr D ( \mathbf R^n, \mathbf R^n )$ and $\lambda =
	\Lip g$.  We define $h : \mathbf R \times \mathbf R^n \to \mathbf R^n$
	and $h_t : \mathbf R^n \to \mathbf R^n$ such that $h(t,x) = h_t(x) =
	x+tg(x)$ whenever $(t,x) \in \mathbf R \times \mathbf R^n$; we
	abbreviate $I_t = \spt \boldsymbol [ 0,t \boldsymbol ]$ for $t \in
	\mathbf R$ and $\phi = \| \boundary_G S \|$.
	Identifying $\boldsymbol [ 0,t \boldsymbol ] \in \mathbf
	I_1 ( \mathbf R )$ with $\iota_{\mathbf R,1} ( \boldsymbol [0,t
	\boldsymbol ] ) \in \mathbf I_1 ( \mathbf R, \mathbf Z )$, see
	\cite[5.1]{arXiv:2206.14046v2}, we notice
	\begin{equation*}
		\boldsymbol [ 0,t \boldsymbol ] \times ( \boundary_G S) \in
		\mathscr R_m ( \mathbf R \times \mathbf R^n, G ) \quad
		\text{with} \quad \| \boldsymbol [ 0, t \boldsymbol ] \times
		\boundary_G S \| = ( \mathscr L^1 \restrict I_t) \times \phi
	\end{equation*}
	whenever $t \in \mathbf R$ by \cite[4.7]{arXiv:2206.14046v2}.
	Employing \cite[5.13\,(6), 5.16, 5.13\,(4), 4.5,
	4.6]{arXiv:2206.14046v2}, we then obtain
	\begin{gather*}
		(h_t)_\# S \in \mathbf I_m ( \mathbf R^n, G ), \quad h_\# (
		\boldsymbol [ 0,t \boldsymbol ] \times S ) \in \mathbf I_{m+1}
		( \mathbf R^n, G ), \\ 
		h_\# ( \boldsymbol [ 0,t  \boldsymbol ]
		\times \boundary_G S) \in \mathbf I_m ( \mathbf R^n, G ), \\
		(h_t)_\# S - S = \boundary_G h_\# ( \boldsymbol [ 0,t
		\boldsymbol ] \times S ) + h_\# ( \boldsymbol [ 0,t
		\boldsymbol ] \times \boundary_G S ), \\
		\quad \boundary_G \big (
		(h_t)_\# S - h_\# ( \boldsymbol [ 0,t \boldsymbol ] \times
		\boundary_G S ) \big ) = \boundary_G S, \\
		\| S \| ( \mathbf R^n ) - \| h_\# ( \boldsymbol [ 0,t
		\boldsymbol ] \times \boundary_G S ) \| ( \mathbf R^n ) \leq
		\| (h_t)_\# S \| ( \mathbf R^n ), \\
		\| h_\# ( \boldsymbol [ 0,t \boldsymbol ] \times \boundary_G
		S) \| ( \mathbf R^n ) \leq {\textstyle\int_{I_t \times \mathbf
		R^n}} \big \| {\textstyle\bigwedge_m} ( \mathscr L^1 \times
		\phi, m ) \ap \Der h \big \| \ud \mathscr L^1 \times \phi
	\end{gather*}
	whenever $t \in \mathbf R$; in case $|t| \lambda < 1$, we also observe
	that $h_t$ is a diffeomorphism with $\im h_t = \mathbf R^n$, hence $
	\| (h_t)_\# V \| ( \mathbf R^n ) = \| (h_t)_\# S \| ( \mathbf R^n )$
	by \cite[3.27, 3.28, 4.6]{arXiv:2206.14046v2}.  For $\mathscr L^1
	\times \phi$ almost all $(s,x) \in I_t \times \mathbf R^n$, recalling
	\begin{equation*}
		\Tan^m ( \mathscr L^1 \times \phi, (s,x)) = \mathbf R \times
		\Tan^{m-1} ( \phi,x )
	\end{equation*}
	from \cite[4.4, 4.7]{arXiv:2206.14046v2}, we finally estimate
	\begin{align*}
		\big \| {\textstyle\bigwedge_m} ( \mathscr L^1 \times \phi, m)
		\ap \Der h (s,x) \big \| \leq (1+|t|\lambda)^{m-1} \big |
		\Nor^{m-1} ( \phi, x)_\natural (g(x)) \big |\phantom , & \\
		+ (m-1) |t| \lambda ( 1+|t| \lambda )^{m-2} \big |
		\Tan^{m-1} ( \phi, x )_\natural (g(x)) \big |; &
	\end{align*}
	thus, \cite[4.1]{MR0307015} yields the conclusion.
\end{proof}

\begin{remark} \label{remark:connected-integral-minimisers}
	The case $G = \mathbf Z$ appears in \cite[4.8\,(4)]{MR0307015} whose
	method we employ.
\end{remark}

\begin{remark} \label{remark:connected-minimisers}
	It follows that if such $S$ is indecomposable, then $\spt \| S \|$ is
	connected by \cite[7.7, 10.9]{arXiv:2209.05955v2}.
\end{remark}
	
Theorem \ref{Thm:Plateau:chains} corresponds to the case $G = \mathbf Z$ in
the next theorem, see \cite[5.1]{arXiv:2206.14046v2}.

\begin{theorem} \label{theorem:diameter_bound_mass_minimising_chains}
	Suppose $m$ and $n$ are integers, $2 \leq m \leq n$, $G$ is a complete
	normed commutative group, $S \in \mathbf I_m( \mathbf R^n, G)$ is
	indecomposable,
	\begin{equation*}
		\| S \|( \mathbf R^n ) \leq \| T \| ( \mathbf R^n ) \quad
		\text{whenever $T \in \mathbf I_m ( \mathbf R^n, G)$ and
		$\boundary_G T = \boundary_G S$},
	\end{equation*}
	$\boldsymbol \Uptheta^m(\|S\|,x) \geq 1$ for $\| S \|$ almost all $x$,
	$d$ denotes the geodesic diameter of $\spt \| S \|$, and $\gamma =
	\Gamma_{\textup{\ref{thm:diameter_bound}}} (m)$; if $m > 2$, then
	suppose $W \in \mathbf V_{m-1} ( \mathbf R^n )$, $\| \updelta W \|$ is
	a Radon measure, $\boldsymbol \Uptheta^{m-1} (\| W \|, x) \geq 1$ for
	$\| W \|$ almost all $x$, and $\| \boundary_G S \| \leq \| W \|$; if
	$m > 3$, then suppose $\| \updelta W \|$ is absolutely continuous with
	respect to $\| W \|$.

	Then, the following three statements hold.
	\begin{enumerate}
		\item If $m = 2$, then $d \leq \gamma \, \| \boundary_G S \| (
		\mathbf R^n )$.
		\item If $m = 3$, then $d \leq \gamma \, \| \updelta W \| (
		\mathbf R^n )$.
		\item If $m > 3$, then $d \leq \gamma \int | \mathbf h (W,x)
		|^{m-2} \ud \| W \| \, x$.
	\end{enumerate}
\end{theorem}

\begin{proof}
	Let $V \in \mathbf{RV}_m ( \mathbf R^n )$ be characterised by $\| V \|
	= \| S \|$.  From \ref{lemma:first-variation-minimiser}, we obtain $\|
	\updelta V \| \leq \| \boundary_G S \|$; in particular, $\mathbf
	h(V,\cdot) = 0$ by \cite[3.5\,(1b)]{MR0307015}.  In view of
	\cite[10.9]{arXiv:2209.05955v2}, the conclusion then follows by
	applying \ref{thm:diameter_bound} with $(V_1,V_2)$ replaced by $(V,0)$
	if $m=2$ and $(V,W)$ if $m>2$.
\end{proof}

\begin{remark} \label{remark:diameter_bound_mass_minimising_currents}
	For any $S \in \mathbf I_m ( \mathbf R^n, G )$ satisfying
	\begin{equation*}
		\mathbf \| S \| ( \mathbf R^n ) \leq \mathbf \| T \| ( \mathbf
		R^n ) \quad \text{whenever $T \in \mathbf I_m ( \mathbf R^n, G
		)$ and $\boundary_G T = \boundary_G S$},
	\end{equation*}
	indecomposability of $\boundary_G S$ implies indecomposability of $S$.
\end{remark}

\begin{remark} \label{remark:classic-Abelian-groups}
	We recall \cite[5.1, 6.2]{arXiv:2206.14046v2} and suppose $G = \mathbf
	Z$ or $G = \mathbf Z / d \mathbf Z$ for some positive integer $d$.
	Then, whenever $B \in \mathbf I_{m-1} ( \mathbf R^n, G )$ with
	$\boundary_G B = 0$, there exists $S \in \mathbf I_m ( \mathbf R^n,
	G)$ with $\boundary_G S = B$ satisfying
	\begin{equation*}
		\mathbf \| S \| ( \mathbf R^n ) \leq \mathbf \| T \| ( \mathbf
		R^n ) \quad \text{whenever $T \in \mathbf I_m ( \mathbf R^n, G
		)$ and $\boundary_G T = \boundary_G S$};
	\end{equation*}
	in fact, in view of \cite[4.6, 5.13\,(6)]{arXiv:2206.14046v2}, we
	combine \cite[4.1.11, 4.1.16, 4.2.17\,(2), 4.2.26]{MR41:1976}.  For
	these $G$, the density hypotheses are redundant.  For general $G$, the
	validity of a rectifiability theorem analogous to
	\cite[4.2.16\,(3)]{MR41:1976}---which is central to this minimisation
	process---has been characterised in \cite[7.1]{MR1715323} in the
	context of the flat $m$-chains over $G$ introduced in \cite{MR185084};
	see \cite[p.\,7]{arXiv:2206.14046v2} regarding the pending comparison
	to the present concept of $G$ chains.
\end{remark}

\begin{remark} \label{remark:earlier-results}
	Here, we discuss the novelty of our results in the case that $G =
	\mathbf Z$, $B$ is a connected orientable compact $m-1$ dimensional
	submanifold of $\mathbf R^n$,  and $\| \boundary_{\mathbf Z} S \| =
	\mathscr H^{m-1} \restrict B$.  For this purpose, we abbreviate $\spt
	\| S \| = A$ and recall \cite[4.1.21, 4.1.31\,(2)]{MR41:1976}.  If $B$
	is of class $4$, then connectedness of $A \without B$, and thus of
	$A$, is a consequence of the profound regularity results leading to
	\cite[5.23]{MR1777737}; that $A$ is necessarily path-connected---in
	fact, must have finite geodesic diameter---is new.  If $B$ is of class
	$2$ and $m = n-1$, then finiteness of the geodesic diameter of $A$ can
	be deduced from earlier results---it suffices to combine \cite[Theorem
	6.8\,(1)]{MR3626845} with \cite[11.2\,(1)\,(3)]{MR554379}---whereas
	the a priori bound on the geodesic diameter in terms of $B$ and $m$ is
	new.
\end{remark}

\begin{remark} \label{remark:nonexistence}
	As in \cite[Section 3]{MR4553536}, the preceding theorem implies
	nonexistence of indecomposable solutions to the Plateau problem
	in the case that the diameter of $\spt \|
	\boundary_G S \|$ strictly exceeds the upper bound for $d$ in the
	conclusion by \cite[5.13\,(5)]{arXiv:2206.14046v2}.
\end{remark}

Now, we return to the setting of varifolds.

\begin{lemma} \label{lemma:density_of_variation_II}
	Suppose $m$ and $n$ are positive integers, $m \leq n$, $B$ is an $m-1$
	dimensional submanifold of $\mathbf R^n$ of class $2$, $0 < s < R$, $b
	\in B$, $V \in \mathbf V_m ( \mathbf U (b,s) )$, $b \in \spt \| V \|$,
	$\| V \| ( B \cap \mathbf U (b,s) ) = 0$,
	\begin{gather*}
		\big | \Nor ( B,z )_\natural ( z-y ) \big | \leq (2R)^{-1}
		|z-y|^2 \quad \text{for $y,z \in B$}, \\
		\mathbf U (b,s/(1-s/R)) \cap ( \Clos B ) \without B =
		\varnothing,
	\end{gather*}
	$\spt \updelta V \subset B$, and $\boldsymbol \Uptheta^m ( \| V \|, x )
	\geq 1$ for $\| V \|$ almost all $x$.

	Then, the following two statements hold.
	\begin{enumerate}
		\item \label{item:density_of_variation:variation} If $0 < r <
		s/2$, then
		\begin{equation*}
			\| \updelta V \| \, \mathbf B (b,r) \leq \big ( r^{-1}
			+ m/(R-s) \big ) \| V \| \, \mathbf B (b,2r);
		\end{equation*}
		in particular, $\boldsymbol \Uptheta^{\ast m-1} ( \| \updelta V
		\|, b ) \leq 2^m \boldsymbol \upalpha (m-1)^{-1} \boldsymbol
		\upalpha (m) \boldsymbol \Uptheta^m ( \| V \|, b )$.
		\item \label{item:density_of_variation:mass} The function
		mapping $0 < r \leq s$ onto
		\begin{equation*}
			\frac{\| V \| \, \mathbf U (b,r)}{r^m} \exp \left (
			\frac{3mR}{2(R-s)^2} r \right )
		\end{equation*}
		is nondecreasing.  Moreover, there holds $1/2 \leq \boldsymbol
		\Uptheta^m ( \| V \|, b ) < \infty$.
	\end{enumerate}
\end{lemma}

\begin{proof}
	We notice that \cite[2.2\,(3)\,(4), 3.1, 3.5\,(1)\,(2)]{MR0397520}
	remain valid if the submanifold $B$ therein is required to be of class
	$2$, instead of class $\infty$.  From \cite[2.2\,(4b)]{MR0397520}, we
	thus infer that for $a \in \mathbf U (b,s)$ there exists a unique $y
	\in B$ with $|y-a| = \dist (a,B) < R$.  Therefore, \cite[2.2\,(3),
	3.1\,(1)\,(2)]{MR0397520} yield
	\begin{equation*}
		\| \updelta V \| ( \psi ) \leq {\textstyle\int} | \Der \psi
		(x) \circ S_\natural | \ud V \, (x,S) + m/(R-s)
		{\textstyle\int} \psi \ud \| V \|
	\end{equation*}
	for $0 \leq \psi \in \mathscr D ( \mathbf U (b,s), \mathbf R )$; in
	particular, $\| \updelta V \|$ is a Radon measure and hence $V \in
	\mathbf{RV}_m ( \mathbf U (b,s))$ by \cite[5.5\,(1)]{MR0307015}.
	Moreover, we have
	\begin{equation*}
		1/2 \leq \boldsymbol \Uptheta^m ( \| V \|, b) < \infty
	\end{equation*}
	by \cite[4.8\,(2)\,(4)]{MR3528825} if $m=1$ and by
	\cite[3.5\,(1)\,(2)]{MR0397520} if $m \geq 2$.

	To verify \eqref{item:density_of_variation:variation}, we define $f :
	\mathbf U (b,s) \to \mathbf R$ by
	\begin{equation*}
		f(x) = \sup \big \{ 0, 1 - r^{-1} \dist (x, \mathbf B(b,r) )
		\big \} \quad \text{for $x \in \mathbf U (b,s)$},
	\end{equation*}
	hence $0 \leq f \leq 1$, $f(x) = 1$ for $x \in \mathbf B (b,r)$, $\Lip
	f = r^{-1}$, and $\spt f = \mathbf B (b,2r)$.  Recalling
	\cite[3.5\,(1b)]{MR0307015} and \cite[8.7]{MR3528825}, and
	approximating $f$ by nonnegative members of $\mathscr D ( \mathbf U
	(b,s), \mathbf R)$ for instance by means of \cite[3.7]{MR3626845}, we
	conclude
	\begin{equation*}
		\| \updelta V \| ( f ) \leq {\textstyle\int} | V \weakD f |
		\ud \| V \| +  m/(R-s) {\textstyle\int} f \ud \| V \|
	\end{equation*}
	and infer \eqref{item:density_of_variation:variation}, as $| V \weakD
	f (x) | \leq \Lip f$ for $x \in \dmn V \weakD f$.

	Finally, the first conclusion of
	\eqref{item:density_of_variation:mass} may be obtained by adapting the
	case $\alpha = 0$ of \cite[3.4\,(2)]{MR0397520} as follows:%
	\begin{footnote}%
		{This presupposes the following typographical corrections to
		\cite[3.4\,(2)]{MR0397520}: in the statement, replace
		``$m(s)^{1/k} /(R-s)$'' by ''$k m(s)^{1/k}/(R-s)<1$'' on line
		3 thereof; on page 427, replace ``(6)'' by ``(b)'' on line 7,
		``$\mathscr D ( \mathbf R )$'' by ``$\mathscr E^0 ( \mathbf
		R)$'', ``$\Phi$'' by ``$\varphi$'', and ``near $0$'' by ``near
		$0$ with $\sup \spt \varphi < s$'' all on line 19, and
		``$\xi$'' by ``$\zeta$'' on line 27.}
	\end{footnote}%
	Recalling that \cite[2.2\,(3)\,(4), 3.1\,(3)]{MR0397520} remain valid
	for $B$ therein required to be of class $2$ instead of class $\infty$,
	in the statement of \cite[3.4\,(2)]{MR0397520}, omit the definition of
	$\mu$ and modify the definition of $\Phi(r)$ to $\Phi(r) =
	\frac{3kR}{2(R-s)^2} r$, and, in its proof, omit the first paragraph,
	replace $\alpha$ by $0$ throughout, replace ``$g \in \mathscr X (
	\mathbf U(0,s) )$'' by ``$g : \mathbf U (0,s) \to \mathbf R^n$ is of
	class $1$ with compact support'', and omit the summand involving $\mu$
	in the last equation.
\end{proof}

\begin{remark}
	The method of proof of \eqref{item:density_of_variation:variation} is
	adapted from \cite[3.4\,(1)]{MR0397520}.
\end{remark}

\begin{remark}
	If $m = 1$, then the sharp estimate $\boldsymbol \Uptheta^0 ( \|
	\updelta V \|, b ) \leq 2 \boldsymbol \Uptheta^1 ( \| V \|, b )$ holds
	as we could apply \cite[3.1\,(2)]{MR0397520} after the first paragraph
	of the proof.
\end{remark}

The preceding lemma readily yields Theorem
\ref{FinalThm:geom-var-I}.

\begin{theorem} \label{thm:estimate_variation_boundary}
	Suppose $m$ and $n$ are positive integers with $m \leq n$, $B$ is a
	nonempty
	compact $m-1$ dimensional submanifold of class $2$ of $\mathbf R^n$,
	 $R = \reach (B)$, $V \in \mathbf V_m ( \mathbf
	R^n)$, $\| V \| ( B ) = 0$, $\spt \updelta V \subset B \subset \spt \|
	V \|$, $\boldsymbol \Uptheta^m ( \| V \|, x ) \geq 1$ for $\| V \|$
	almost all $x$,
	\begin{equation*}
		M = R^{-m} \sup \{ \| V \| \, \mathbf U (b,R/2) \with b \in B
		\},
	\end{equation*}
	and $\Gamma = 4^m \boldsymbol \upalpha (m-1)^{-1} \exp ( 3m )$.

	Then, there holds $0 < R < \infty$, $\Gamma M \geq 2^{m-1} \boldsymbol
	\upalpha (m) \boldsymbol \upalpha (m-1)^{-1}$, and
	\begin{equation*}
		\| \updelta V \| \leq \Gamma M \mathscr H^{m-1} \restrict B.
	\end{equation*}
\end{theorem}

\begin{proof}
	Since $\reach (B,b) > 0$ for $b \in B$ by \cite[4.12]{MR0110078},
	there holds $R > 0$ by \cite[4.2]{MR0110078}.  Moreover, observing
	that $B$ cannot be convex, we obtain $R < \infty$ from
	\cite[4.2]{MR0110078}; in particular, the first conclusion holds.
	Next, \cite[4.18]{MR0110078} yields
	\begin{equation*}
		| \Nor (B,b)_\natural (y-b) | \leq (2R)^{-1} |y-b|^2 \quad
		\text{whenever $y,b \in B$}.
	\end{equation*}
	Applying \ref{lemma:density_of_variation_II}%
	\,\eqref{item:density_of_variation:variation} with $s = R/2$ and
	\ref{lemma:density_of_variation_II}%
	\,\eqref{item:density_of_variation:mass} with $r = s = R/2$, we obtain
	\begin{equation*}
		\boldsymbol \Uptheta^{\ast m-1} ( \| \updelta V \|, b ) \leq
		2^m \boldsymbol \upalpha (m-1)^{-1} \boldsymbol \upalpha (m)
		\boldsymbol \Uptheta^m ( \| V \|, b ) \leq \Gamma M \quad
		\text{for $b \in B$};
	\end{equation*}
	in particular, the second conclusion holds because $\boldsymbol
	\Uptheta^m ( \| V \|, b ) \geq 1/2$ for $b \in B$ by
	\ref{lemma:density_of_variation_II}%
	\,\eqref{item:density_of_variation:mass}.  As $\boldsymbol
	\Uptheta^{m-1} ( \mathscr H^{m-1} \restrict B, b) = 1$ by
	\cite[3.1.23, 3.2.17]{MR41:1976}, we infer
	\begin{equation*}
		\limsup_{r \to 0+} \frac{\| \updelta V \| \, \mathbf B (b,r)}{
		( \mathscr H^{m-1} \restrict B ) \, \mathbf B (b,r)} \leq
		\Gamma M \quad \text{for $b \in B$}.
	\end{equation*}
	Since $\| \updelta V \| ( \mathbf R^n \without B ) = 0$, this implies
	firstly that $\| \updelta V \|$ is absolutely continuous with respect
	to $\mathscr H^{m-1} \restrict B$ by \cite[2.9.2, 2.9.15]{MR41:1976}
	and then the last conclusion by \cite[2.8.18, 2.9.7]{MR41:1976}.
\end{proof}

Boundary connectedness may be exploited with the following
lemma.

\begin{lemma} \label{lemma:plateau_connectedness}
	Suppose $m$ and $n$ are positive integers, $m \leq n$, $U$ is an open
	subset of $\mathbf R^n$, $X \in \mathbf V_m ( U )$, $\| X \| ( U ) <
	\infty$, $\beta = \infty$ if $m=1$, $\beta = m/(m-1)$ if $m>1$,
	\begin{equation*}
		\sup \{ ( \updelta X ) ( \theta ) \with \theta \in \mathscr D
		( U, \mathbf R^n ), \| X \|_{(\beta)} ( \theta ) \leq 1 \} <
		\boldsymbol \gamma (m)^{-1},
	\end{equation*}
	and $\boldsymbol \Uptheta^m ( \| X \|, x ) \geq 1$ for $\| X \|$
	almost all $x$.

	Then, the closure of every connected component of $\spt \| X \|$ meets
	$\Bdry U$; in particular, if $\Bdry U \cap \Clos \spt \| X \|$ is
	connected, then so is $\Clos \spt \| X \|$.
\end{lemma}

\begin{proof}
	Let $i : U \to \mathbf R^n$ be the inclusion map and denote by $\Phi$
	the family of connected components of $\spt \| X \|$.  Suppose $C \in
	\Phi$, hence $\spt ( \| X \| \restrict C ) = C$ and $\| \updelta ( X
	\restrict C \times \mathbf G (n,m) ) \| = \| \updelta X \| \restrict
	C$ by \cite[6.14\,(4)]{MR3528825} and \cite[5.1]{arXiv:2209.05955v2}.
	Defining
	\begin{equation*}
		V = i_\# ( X \restrict C \times \mathbf G (n,m))
		\in \mathbf V_m (  \mathbf R^n ),
	\end{equation*}
	we note $0 < \| V \| ( \mathbf R^n ) < \infty$ and infer $\spt \| V \|
	= \Clos C$ and
	\begin{equation*}
		\spt \updelta V \subset \spt \| V \| \subset \Clos U, \quad \|
		\updelta V \| \restrict U = i_\# ( \| \updelta X \| \restrict
		C ).
	\end{equation*}
	We conclude $\| \updelta V \| ( \Bdry U ) > 0$ because, using
	\cite[3.3, 3.4]{arXiv:2209.05955v2}, we may estimate
	\begin{align*}
		\boldsymbol \gamma (m)^{-1} \| V \| ( \mathbf R^n )^{1-1/m}
		& \leq \| \updelta V \| ( \mathbf R^n ) \leq \| \updelta X \|
		( U ) + \| \updelta V \| ( \Bdry U ) \\
		& < \boldsymbol \gamma(m)^{-1} \| V \| ( \mathbf R^n )^{1-1/m}
		+ \| \updelta V \| ( \Bdry U );
	\end{align*}
	hence, $\Bdry U \cap \spt \| V \| \neq \varnothing$ and the principal
	conclusion follows.  Accordingly, if $B = \Bdry U \cap \Clos \spt \| X
	\|$ is connected, then so are $B \cup \bigcup \{ \Clos C \with C \in
	\Phi \}$ and its closure $\Clos \spt \| X \|$.
\end{proof}

\begin{remark}
	If $m=1$, then $\upgamma(1)=2^{-1}$ by \cite[3.8, 3.9]{MR3777387} and
	$\upgamma(1)^{-1}$ may not be replaced by any larger value in the
	preceding lemma.
\end{remark}

\begin{remark} \label{remark:sharp-values}
	If $m=2$ and $V$ is integral, then $\upgamma(m)^{-1}$ may be replaced
	by the larger value $(\int_{\mathbf S^m} | \mathbf h ( \mathbf S^m,
	\cdot ) |^m \ud \mathscr H^m )^{1/m}$ by \cite[A.18]{MR2119722}; the
	latter value is evidently sharp. $\big($Based on
	\cite[24.1]{arXiv:2310.01754v1}, the same replacement is feasible in
	case $2 \leq m = n-1$ and $V$ is integral.$\big)$
\end{remark}

\begin{remark} \label{remark:Brakke-flow}
	To illustrate the preceding lemma, let $B = \Bdry U \cap \Clos \spt \|
	W \|$ be the \emph{fixed boundary} of either a \emph{Brakke flow}
	$V_t$, $0 \leq t < \infty$, or a stationary integral varifold
	$V_\infty$, resulting from it as subsequential limit as $t \to
	\infty$; one is assured of the existence of such objects under broad
	conditions by \cite[Theorem 2.2, Corollary 2.4]{MR4204569}.  Thus, if
	this boundary $B$ is connected, so must be $\Clos \spt \| V_t \|$,
	whenever $V_t$ satisfies the first variation condition, and $\Clos
	\spt \| V_\infty \|$.
\end{remark}

In combination with results from \cite{arXiv:2209.05955v2}, Theorem
\ref{FinalThm:geom-var-II} now follows.

\begin{theorem} \label{thm:Reifenberg-Plateau-diameter}
	Suppose $m$ and $n$ are integers, $2 \leq m \leq n$, $B$ is a compact
	connected $m-1$ dimensional submanifold of class $2$ of $\mathbf R^n$,
	$V \in \mathbf V_m ( \mathbf R^n)$, $\| V \| ( \mathbf R^n ) <
	\infty$, $1 \leq \lambda < \infty$,
	\begin{equation*}
		B \subset \spt \| V \|, \quad \| \updelta V \| \leq \lambda
		\mathscr H^{m-1} \restrict B,
	\end{equation*}
	$\boldsymbol \Uptheta^m ( \| V \|, x ) \geq 1$ for $\| V \|$ almost
	all $x$, and $d$ is the geodesic diameter of $\spt \| V \|$.

	Then, there holds
	\begin{equation*}
		d \leq \Gamma_{\textup{\ref{thm:diameter_bound}}} (m) \lambda
		{\textstyle\int_B} | \mathbf h ( B, b ) |^{m-2} \ud \mathscr
		H^{m-1} \, b;
	\end{equation*}
	here, $0^0=1$.
\end{theorem}

\begin{proof}
	We note $V \in \mathbf{RV}_m ( \mathbf R^n )$ by
	\cite[5.5\,(1)]{MR0307015}; in particular, $\| V \| ( B ) = 0$ by
	\cite[3.5\,(1b)]{MR0307015}.  Thus, taking $X = V | \mathbf 2^{(
	\mathbf R^n \without B ) \times \mathbf G (n,m)}$, the set
	\begin{equation*}
		\spt \| V \| = \Clos \spt \| X \|
	\end{equation*}
	is connected by \ref{lemma:plateau_connectedness}.  We then apply
	\cite[10.16]{arXiv:2209.05955v2} in conjunction with
	\cite[9.2]{arXiv:2209.05955v2} to conclude that $V$ is indecomposable
	of type $\mathscr D ( \mathbf R^n, \mathbf R )$.  We
	let $W \in \mathbf V_{m-1} ( \mathbf R^n )$ be
	defined by
	\begin{equation*}
		W(k) = \lambda {\textstyle\int_B} k (b,\Tan(B,b)) \ud \mathscr
		H^{m-1} \, b \quad \text{for $k \in \mathscr K ( \mathbf R^n
		\times \mathbf G (n,m-1))$}
	\end{equation*}
	so that
	\begin{equation*}
		\| \updelta V \| \leq \| W \|.
	\end{equation*}
	The conclusion now follows from \ref{thm:diameter_bound} applied with
	$(V_1,V_2)$ replaced by $(V,0)$ if $m=2$ and $(V,W)$ if $m > 2$,
	respectively.
\end{proof}

\begin{remark} \label{remark:min-max}
	Our hypotheses have been arranged so as to include with $\lambda = 1$
	the varifolds furnished by the min-max methods of \cite[Theorem
	2.6\,(b)]{MR3893761} and \cite[Theorem 1.3]{MR4160865} when, in these
	theorems, the prescribed boundary is connected and the ambient
	Riemannian manifold is a convex body in $\mathbf R^n$.
\end{remark}

The preceding two theorems combine into two corollaries.

\begin{corollary} \label{corollary:Reifenberg-Plateau-diameter}
	Suppose $m$ and $n$ are integers, $2 \leq m \leq n$, $B$ is a nonempty
	compact connected $m-1$ dimensional submanifold of class $2$ of
	$\mathbf R^n$, $R = \reach (B)$, $V \in \mathbf V_m ( \mathbf R^n)$,
	$\| V \| ( \mathbf R^n ) < \infty$, $\| V \| ( B ) = 0$,
	\begin{equation*}
		\spt \updelta V \subset B \subset \spt \| V \|,
	\end{equation*}
	$\boldsymbol \Uptheta^m ( \| V \|, x ) \geq 1$ for $\| V \|$ almost all
	$x$, $M = R^{-m} \sup \{ \| V \| \, \mathbf U (b,R/2) \with b \in B
	\}$, and $d$ is the geodesic diameter of $\spt \| V \|$.

	Then, there holds
	\begin{equation*}
		d \leq \Gamma M {\textstyle\int_B} | \mathbf h ( B, b )
		|^{m-2} \ud \mathscr H^{m-1} \, b,
	\end{equation*}
	where $\Gamma$ is a positive, finite number determined by $m$; here,
	$0^0=1$.
\end{corollary}

\begin{proof}
	Let $\kappa = 2^{m-1} \boldsymbol \upalpha (m) \boldsymbol \upalpha
	(m-1)^{-1}$.  We will show that one may take
	\begin{equation*}
		\Gamma = \sup \big \{ \kappa^{-1}, 1 \big \}
		\Gamma_{\textup{\ref{thm:diameter_bound}}} (m)
		\Gamma_{\textup{\ref{thm:estimate_variation_boundary}}} (m).
	\end{equation*}
	From \ref{thm:estimate_variation_boundary}, we obtain
	$\Gamma_{\textup{\ref{thm:estimate_variation_boundary}}} (m) M \geq
	\kappa$ and
	\begin{equation*}
		\| \updelta V \| \leq
		\Gamma_{\textup{\ref{thm:estimate_variation_boundary}}} (m) M
		\mathscr H^{m-1} \restrict B;
	\end{equation*}
	hence, we may apply \ref{thm:Reifenberg-Plateau-diameter} with
	$\lambda = \sup \{ 1,
	\Gamma_{\textup{\ref{thm:estimate_variation_boundary}}} (m) M \}$.
\end{proof}

\begin{remark} \label{remark:Brakke-flow-II}
	The preceding corollary is applicable to the image in $\mathbf R^n$ of
	the varifolds furnished by subsequential limits as time approaches
	$\infty$ of the Brakke flow with fixed boundary in \cite[Corollary
	2.4]{MR4204569} when the boundary in the cited source is a connected
	$m-1$ dimensional submanifold of class $2$ of $\mathbf R^n$; in this
	case, $M$ is bounded a priori by the initial conditions of the flow.
\end{remark}

\begin{corollary} \label{corollary:Reifenberg-Plateau-diameter-sets}
	Suppose $m$ and $n$ are integers, $2 \leq m \leq n$, $B$ is a nonempty
	compact connected $m-1$ dimensional submanifold of class $2$ of
	$\mathbf R^n$, $R = \reach (B)$, $S$ is a $(\mathscr H^m,m)$
	rectifiable subset of $\mathbf R^n$,
	\begin{gather*}
		B \subset S, \quad S = \spt ( \mathscr H^m \restrict S ), \\
		{\textstyle\int_S \Tan^m ( \mathscr H^m \restrict S, x )
		\bullet \Der \theta (x) \ud \mathscr H^m \, x = 0} \quad
		\text{for $\theta \in \mathscr D ( \mathbf R^n \without B,
		\mathbf R^n)$}, \\
		M = R^{-m} \sup \{ \mathscr H^m ( S \cap  \mathbf U (b,R/2) )
		\with b \in B \},
	\end{gather*}
	and $d$ is the geodesic diameter of $S$.

	Then, there holds
	\begin{equation*}
		d \leq
		\Gamma_{\textup{\ref{corollary:Reifenberg-Plateau-diameter}}}
		(m) M {\textstyle\int_B} | \mathbf h ( B, b ) |^{m-2} \ud
		\mathscr H^{m-1} \, b;
	\end{equation*}
	here, $0^0=1$.
\end{corollary}

\begin{proof}
	We take $V \in \mathbf{RV}_m ( \mathbf R^n )$ such that $\| V \| =
	\mathscr H^m \restrict S$ in
	\ref{corollary:Reifenberg-Plateau-diameter}.
\end{proof}

Finally, we include the derivation of Theorem
\ref{FinalThm:Reifenberg-Plateau}.

\begin{remark} \label{remark:Reifenberg-Plateau}
	Adapting the terminology of
	\cite[\S\,12]{MR3800850} and drawing from the literature,%
	\begin{footnote}%
		{The authors did not verify the results of
		\cite{MR0050886,MR0420406,MR3998213,MR4489608} employed.}
	\end{footnote}%
	we will deduce the following corollary: \emph{If $G$ is a commutative
	group, $L$ is a subgroup of the $(m-1)$-th Čech homology group of $B$
	with coefficients in~$G$, $\mathscr{\check C} (B,L,G)$ denotes the
	family of compact subsets of $\mathbf R^n$ spanning $L$,
	\begin{equation*}
		E \in \mathscr{\check C} (B,L,G), \quad \mathscr H^m (E) =
		\inf \big \{ \mathscr H^m (F) \with F \in \mathscr{\check C}
		(B,L,G) \big \},
	\end{equation*}
	$S = \spt ( \mathscr H^m \restrict E )$, then $\mathscr H^m (E) \leq
	\mathscr H^{m-1} (B) \diam(B)/m$ and the geodesic diameter of $S$ does
	not exceed
	\begin{equation*}
		\Gamma_{\textup{\ref{corollary:Reifenberg-Plateau-diameter}}}
		(m) \reach(B)^{-m} \mathscr H^m (E) {\textstyle\int_B} |
		\mathbf h (B,b) |^{m-2} \ud \mathscr H^{m-1} \, b.
	\end{equation*}}%
	We assume $L \neq \{ 0 \}$ because $L = \{ 0 \}$ implies $\mathscr H^m
	(E) = 0$.  Denoting by $C$ the convex hull of $B$ and verifying
	\begin{equation*}
		Y_c = \{ tb+(1-t)c \with b \in B, 0 \leq t \leq 1 \} \in
		\mathscr{\check C} (B,L,G) \quad \text{for $c \in C$}
	\end{equation*}
	by means of \cite[Chapter 9, Theorems 3.4, 4.4, and 5.1]{MR0050886},
	the estimate of $\mathscr H^m (E)$ follows from
	\cite[3.2.20]{MR41:1976}; thus, noting \cite[2.1.3]{MR4489608}, $E
	\without B$ is either empty or $(1, \infty)$ restricted with respect
	to $B$ in the sense of \cite[II.1]{MR0420406} so that $E$ is
	$(\mathscr H^m,m)$ rectifiable by \cite[II.3\,(9)]{MR0420406} and
	\begin{gather*}
		{\textstyle\int_E \Tan^m ( \mathscr H^m \restrict E, x
		)_\natural \bullet \Der \theta (x) \ud \mathscr H^m \, x = 0}
		\quad \text{for $\theta \in \mathscr D ( \mathbf R^n \without
		B, \mathbf R^n)$}
	\end{gather*}
	by \cite[4.1]{MR0307015}.  Inferring $S \in \mathscr{\check C}
	(B,L,G)$, from \cite[2.2.1]{MR4489608} applied with $E_k$, $E$, and
	$d$ replaced by $E$, $S$, and $m$, and concluding $B \subset S$ from
	\cite[1.2.2]{MR4489608}, the preceding corollary becomes applicable.
	We also record that, noting \cite[2.10.11, 4.1.16]{MR41:1976} and
	\cite[2.1.3]{MR4489608}, the existence of such a set $E$, additionally
	contained $C$, is guaranteed for instance by \cite[3.2.2]{MR4489608}
	applied with $\Gamma = B$, $d=m$, and $\mathscr I = \mathscr H^m$.
	Finally, the isoperimetric inequality \cite[Theorem 2]{MR3998213}
	yields
	\begin{equation*}
		\mathscr H^m (E) \leq \Delta \, \mathscr H^{m-1}
		(B)^{m/(m-1)},
	\end{equation*}
	where $\Delta$ is a finite number determined by $n$; in fact, in view
	of \cite[2.1.3, 2.2.1]{MR4489608}, the estimate is entailed by
	applying the inequality with $A$, $Y$, and $L$, replaced by $B$,
	$Y_c$, and $\mathscr H^{m-1} (B)^{1/(m-1)}$ for some $c \in C$ because
	$Y_c \in \mathscr{\check C} (B,L,G)$.
\end{remark}

\bibliographystyle{myalphaurl}
\bibliography{ohne_duplikate}

\medskip \noindent \textsc{Affiliations}

\medskip \noindent Ulrich Menne \smallskip \newline
Department of Mathematics \\
National Taiwan Normal University \\
No.88, Sec.4, Tingzhou Rd. \\
Wenshan Dist., \textsc{Taipei City 116059 \\
        Taiwan(R.\ O.\ C.)}

\medskip \noindent Christian Scharrer \smallskip \newline Institute for Applied Mathematics
\newline University of Bonn
\newline Endenicher Allee 60 \newline
\textsc{53115 Bonn} \\ \textsc{Germany}

\medskip \noindent \textsc{Email addresses}

\medskip \noindent
\href{mailto:Ulrich.Menne@math.ntnu.edu.tw}{Ulrich.Menne@math.ntnu.edu.tw}
\quad
\href{mailto:Scharrer@iam.uni-bonn.de}{Scharrer@iam.uni-bonn.de}

\end{document}